\documentclass[a4paper, 12pt]{amsart}
\usepackage{a4wide}

\usepackage{amsmath}
\usepackage{amsthm}
\usepackage{amsfonts}
\usepackage{stmaryrd}
\usepackage{amssymb}
\usepackage{sseq}
\usepackage{array}
\usepackage[all]{xy}
\usepackage{color}
\definecolor{darkblue}{rgb}{0,0,0.9}
\definecolor{lightblue}{rgb}{.5,.5,.9}
\usepackage[pdftex,breaklinks,colorlinks,filecolor=blue,linkcolor=blue,citecolor=blue,urlcolor=blue,hypertexnames=true,plainpages=false]{hyperref}

\usepackage{mathrsfs}

\usepackage{tikz}
\usepackage{etoolbox}
\usepackage{comment}
\usepackage{enumitem}

\numberwithin{equation}{section}


\newcommand\Q{\mathbb{Q}}
\newcommand\Z{\mathbb{Z}}

\newcommand{\M}{\mathcal{M}}

\newcommand\tensor{\otimes}

\renewcommand{\Im}{\textup{Im}}

\newcommand{\bocj}[1]{\beta^{\Z/\!2^{#1}}}
\newcommand{\bocqz}{\beta^{\Q/\Z}}

\newcommand{\wt}{\widetilde}

\newcommand{\Sq}{Sq}


\theoremstyle{plain}
\newtheorem{theorem}{Theorem}[section]
\newtheorem{lemma}[theorem]{Lemma}
\newtheorem{corollary}[theorem]{Corollary}
\newtheorem{proposition}[theorem]{Proposition}

\theoremstyle{definition}

\theoremstyle{remark}
\newtheorem{remark}[theorem]{Remark}


\usepackage{color}


\topmargin 0pt
\advance \topmargin by -\headheight
\advance \topmargin by -\headsep
     
\textheight 8.9in
     
\oddsidemargin 0pt
\evensidemargin \oddsidemargin
\marginparwidth 0.5in
     
\textwidth 6.5in

\makeatletter
\let\uppercasenonmath\@gobble
\makeatother

\title{A Bilinear Form for Spin$^c$ Manifolds}

\author[Huijun Yang]{Huijun Yang}
\address{School of Mathematics and Statistics, Henan University, Kaifeng 475004, Henan, China}
\email{yhj@amss.ac.cn}

\thanks{This work was supported by the Natural Science Foundation of Henan (No. 242300421380).}

\subjclass[2020]{57R20; 57R90}
\keywords{Bilinear form; spin$^c$ manifolds; Wu classes; bordism groups; Steenrod squares}



\begin{document}
\maketitle

\begin{abstract}
Let $M$ be a closed oriented spin$^{c}$ manifold of dimension $(8n {+} 2)$ with fundamental class $[M]$, and let $\rho_{2} \colon H^{4n}(M; \Z) \rightarrow H^{4n}(M; \Z/2)$ denote the $\bmod ~ 2$ reduction homomorphism.
For any torsion class $t \in H^{4n}(M;\Z)$, we establish the identity
\[ \langle \rho_2(t) \cdot Sq^2 \rho_2 (t), [M] \rangle = \langle \rho_2 (t) \cdot Sq^2 v_{4n}(M), [M]\rangle, \]
where $Sq^2$ is the Steenrod square, $v_{4n}(M)$ is the $4n$-th Wu class of $M$, $ x\cdot y$ denotes the cup product of $x$ and $y$, and $\langle \cdot ~, ~\cdot \rangle$ denotes the Kronecker product.
This result generalizes the work of Landweber and Stong from spin to spin$^c$ manifolds. 

As an application, let $\beta^{\Z/2} \colon H^{4n+2}(M; \Z/2) \to H^{4n+3}(M; \Z)$ be the Bockstein homomorphism associated to the short exact sequence of coefficients 
$\Z \xrightarrow{\times 2} \Z \to \Z/2$. We deduce that $\beta^{\Z/2}(Sq^2 v_{4n}(M)) = 0$, and consequently, $Sq^3 v_{4n}(M) = 0$, for any closed oriented spin$^{c}$ manifold $M$ with $\dim M \le 8n{+}1$. 
\end{abstract}

\section{Introduction} 
\label{s:intro}

Let $X$ be a $CW$-complex.
Throughout this paper, unless specified otherwise (such as Section \ref{s:main}), $H^{\ast}(X)$ denotes its integral cohomology ring.
Let $TH^*(X)$ denote the torsion subgroup of $H^*(X)$, $\Sq^{k} \colon H^{i}(X; \Z/2) \rightarrow H^{i+k}(X; \Z/2)$ the $k$-th Steenrod square,  
$\rho_{2} \colon H^*(X) \to H^*(X; \Z/2)$ the mod~$2$ reduction homomorphism. 
The short exact coefficient sequence
$0 \to \Z \xrightarrow{\times 2} \Z \to \Z/2 \to 0$ induces the associated Bockstein long exact sequence:
\begin{equation}\label{eq:bseq}
\cdots \rightarrow H^{i}(X) \xrightarrow{~\times 2~} H^{i}(X)\xrightarrow{~\rho_{2}~} 
H^{i}(X;\Z/2)\xrightarrow{~\bocj{}~} H^{i+1}(X) \to \cdots
\end{equation}
where $\bocj{} \colon H^{i}(X;\Z/2) \to H^{i+1}(X)$ is the Bockstein homomorphism.

Unless otherwise stated, all manifolds considered in this paper are assumed to be smooth, closed, and oriented.
For a manifold $M$, we denote by $w_{i}(M)$ and $v_{i}(M)$ its $i$-th Stiefel-Whitney class and Wu class, respectively, by $[M]$ its fundamental class, and by $\langle \cdot ~, ~\cdot \rangle$ the Kronecker product.

For a $(4n{+}1)$-dimensional manifold $M$, the symmetric bilinear form 
\begin{align*}
H^{2n}(M;\Z/2) \times H^{2n}(M;\Z/2) \to \Z/2: \quad (x, y) \mapsto \langle x \cdot Sq^1 y, [M]\rangle,
\end{align*}
where $x \cdot y$ denotes the cup product of $x$ and $y$, has been investigated by Browder \cite{br62} and Lusztig, Milnor and Peterson \cite{lmp69}.
Browder \cite[Lemma 5]{br62} established the identity
\[ \langle x \cdot \Sq^{1}  x , [M] \rangle = \langle x \cdot \Sq^{1}  v_{2n}(M) , [M] \rangle, ~\text{for any $x \in H^{2n}(M; \Z/2)$}.\]
Building on this identity, Lusztig, Milnor and Peterson \cite{lmp69} prove that the difference between the semi-characteristics over $\Z/2$ and $\Q$ equals a specific Stiefel-Whitney number, thereby clarifying when the semi-characteristic is independent of the coefficient field.

For an $(8n{+}2)$-dimensional manifold $M$, Landweber and Stong \cite{ls87} consider the bilinear form 
\begin{align*}
H^{4n}(M) \times H^{4n}(M) \to \Z/2: \quad (x, y) \mapsto \langle \rho_2(x) \cdot Sq^2 \rho_2(y), [M]\rangle.
\end{align*}
Assuming $M$ is spin (i.e., $w_2(M) = 0$),
they established the identity
\[ \langle \rho_{2}(x) \cdot \Sq^{2} \rho_{2} ( x ), [M] \rangle = \langle \rho_{2} ( x ) \cdot \Sq^{2}  v_{4n}(M) , [M] \rangle,~\text{for any $x \in H^{4n}(M)$},\]
which is directly analogous to the identity obtained by Browder \cite{br62}, mentioned above.
As a corollary, they deduced that $Sq^3 v_{4n}(M)=0$ for any $(8n{+}2)$-dimensional spin manifold $M$.
It is worth noting that, related to this bilinear form, Fang and Pan \cite{fp04} defined secondary Brown-Kervaire quadratic forms for $\Phi$-oriented manifolds, which they used to provide a complete classification $(n{-}2)$-connected $2n$-dimensional $\pi$-manifolds up to homotopy equivalence and homeomorphism for $n\ge 4$ when $n{+}2$ is not a power of $2$.

In this paper, we generalize the result of Landweber and Stong \cite{ls87} to spin$^c$ manifolds.
For an $(8n{+}2)$-dimensional manifold $M$, we consider the bilinear form 
\begin{align*}
TH^{4n}(M) \times TH^{4n}(M) \to \Z/2: \quad (x, y) \mapsto \langle \rho_2(x) \cdot Sq^2 \rho_2(y), [M]\rangle,
\end{align*}
defined on the torsion subgroup.
One of our main results is the following theorem.
\begin{theorem}\label{thm:equiv}
The following two statements are equivalent:

1) for any $(8n{+}2)$-dimensional spin manifold $M$, and any $x \in H^{4n}(M)$,
\[ \langle \rho_{2}(x) \cdot \Sq^{2} \rho_{2} ( x ), [M] \rangle = \langle \rho_{2} ( x ) \cdot \Sq^{2}  v_{4n}(M) , [M] \rangle.\]

2) for any $(8n{+}2)$-dimensional spin manifold $M$, and any torsion class $t \in TH^{4n}(M)$,
\[ \langle \rho_{2}(t) \cdot \Sq^{2} \rho_{2} ( t ), [M] \rangle = \langle \rho_{2} ( t ) \cdot \Sq^{2}  v_{4n}(M) , [M] \rangle.  \]
\qed
\end{theorem}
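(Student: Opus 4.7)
The direction $(1) \Rightarrow (2)$ is immediate, since a torsion class is in particular a class. I focus on $(2) \Rightarrow (1)$. Fix a closed spin $(8n{+}2)$-manifold $M$ and set
\[ Q(x) := \langle \rho_{2}(x) \cdot \Sq^{2} \rho_{2}(x), [M] \rangle + \langle \rho_{2}(x) \cdot \Sq^{2} v_{4n}(M), [M] \rangle \in \Z/2 \]
for $x \in H^{4n}(M)$; working modulo $2$, subtraction equals addition. Then~(2) asserts $Q(t)=0$ for every torsion $t$, and~(1) asserts $Q \equiv 0$.

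The first step is to show that $Q$ is a group homomorphism. In the expansion $Q(x+y) - Q(x) - Q(y)$, the $v_{4n}(M)$ term is $\Z$-linear in $x$ and cancels, leaving
\[ \langle \rho_{2}(x) \cdot \Sq^{2} \rho_{2}(y) + \rho_{2}(y) \cdot \Sq^{2} \rho_{2}(x), [M] \rangle. \]
By the Cartan formula $\Sq^{2}(ab) = \Sq^{2}(a)\,b + \Sq^{1}(a)\,\Sq^{1}(b) + a\,\Sq^{2}(b)$, the integrand equals $\Sq^{2}(\rho_{2}(x)\,\rho_{2}(y)) - \Sq^{1}\rho_{2}(x) \cdot \Sq^{1}\rho_{2}(y)$. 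The second summand vanishes because $\Sq^{1}\rho_{2} = \rho_{2}\,\beta^{\Z/2}\,\rho_{2} = 0$, and the first pairs with $[M]$ to give $\langle v_{2}(M) \cdot \rho_{2}(x)\,\rho_{2}(y), [M]\rangle$ by Wu's formula, which vanishes because $M$ is spin and therefore $v_{2}(M) = w_{2}(M) + w_{1}(M)^{2} = 0$. Hence $Q$ is additive.

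Next, decompose $H^{4n}(M) = F \oplus T$ with $T := TH^{4n}(M)$ and $F$ a torsion-free complement. Writing $x = f + t$, additivity together with~(2) gives $Q(x) = Q(f) + Q(t) = Q(f)$. Thus (1) is equivalent to the assertion that $Q(f) = 0$ for every $f \in F$.

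This last step is the main anticipated obstacle, and my plan is to attack it via spin bordism. The two terms defining $Q(x)$ are Stiefel--Whitney-type characteristic numbers of the pair $(M, x)$, so $Q$ descends to a homomorphism $\Omega_{8n+2}^{\mathrm{Spin}}(K(\Z, 4n)) \to \Z/2$. Hypothesis~(2) makes this vanish on classes of the form $[M', t']$ with $t' \in TH^{4n}(M')$, and it remains to check that such classes generate the whole bordism group. I expect to handle this either by an Atiyah--Hirzebruch spectral sequence analysis of $\Omega_{*}^{\mathrm{Spin}}(K(\Z, 4n))$, or by a geometric surgery construction that realizes $f$ as the Poincar\'e dual of a $(4n{+}2)$-dimensional submanifold and performs framed, spin-compatible surgery to cobord $(M, f)$ into a pair whose distinguished cohomology class is torsion.
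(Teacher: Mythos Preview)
Your overall strategy matches the paper's: both sides of the identity are spin-bordism invariants of the pair $(M,x)$, hence define homomorphisms $\widetilde{\Omega}_{8n+2}^{\mathrm{Spin}}(K(\Z,4n)) \to \Z/2$, and the task is to show that pairs $(N,t)$ with $t$ torsion already generate this bordism group. Your additivity computation for $Q$ is correct (and uses the spin hypothesis essentially, via $v_2=0$), but it is not needed and the paper does not do it: once you know $Q$ is a bordism invariant vanishing on the torsion-represented classes, surjectivity of the latter finishes the argument for arbitrary $x$ without any decomposition $x=f+t$.

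The step you leave open---that torsion-represented classes generate---is the whole content, and neither of your proposed attacks is sharp enough as stated. Running the Atiyah--Hirzebruch spectral sequence directly on $K(\Z,4n)$ is unpleasant because its integral homology is complicated. The paper's device is to recast ``$t$ is torsion'' as ``$f_t$ factors through the Bockstein map $\bar\beta\colon K(\Q/\Z,4n{-}1)\to K(\Z,4n)$'', so that the desired generation statement becomes surjectivity of
\[
\bar\beta_* \colon \widetilde{\Omega}_{8n+2}^{\mathrm{Spin}}(K(\Q/\Z,4n{-}1)) \longrightarrow \widetilde{\Omega}_{8n+2}^{\mathrm{Spin}}(K(\Z,4n)).
\]
One then looks at the cofiber $C_{\bar\beta}$: its integral homology is $H_*(K(\Z,4n);\Q)\cong\Q[x]$, concentrated in degrees divisible by $4n$, while $\Omega_q^{\mathrm{Spin}}$ is torsion whenever $q\not\equiv 0\pmod 4$. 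The Atiyah--Hirzebruch spectral sequence for $C_{\bar\beta}$ therefore collapses to zero in total degrees $8n{+}2$ and $8n{+}3$, giving $\widetilde{\Omega}_{8n+2}^{\mathrm{Spin}}(C_{\bar\beta})=\widetilde{\Omega}_{8n+3}^{\mathrm{Spin}}(C_{\bar\beta})=0$; hence $\bar\beta_*$ is actually an \emph{isomorphism} in degree $8n{+}2$. This Bockstein-cofiber trick is the specific idea your plan is missing; the surgery route you sketch would have to reprove the same surjectivity and is not obviously easier.
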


Furthermore, in the sense of Theorem \ref{thm:equiv}, the result of Landweber and Stong \cite[Proposition 1.1]{ls87} generalizes as follows.
\begin{theorem}\label{thm:main}
For any $(8n {+} 2)$-dimensional spin$^{c}$ manifold $M$ and any torsion class $t \in TH^{4n}(M)$,
\begin{equation*}
\langle \rho_{2} ( t ) \cdot \Sq^{2} \rho_{2} ( t ) , [M] \rangle = \langle \rho_{2}( t ) \cdot \Sq^{2} v_{4n}(M) , [M] \rangle.
\end{equation*}
\end{theorem}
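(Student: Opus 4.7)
The plan is to simplify the identity via the Wu formula and reduce the resulting pairing to one on a spin manifold, where Theorem \ref{thm:equiv} combined with Landweber and Stong's original theorem gives the vanishing.

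First, I would apply the Wu formula to the right-hand side. For an oriented manifold $M^{8n+2}$ the top Wu class is $v_{4n+1}(M)$, so $v_{4n+2}(M) = 0$, and $v_1(M) = 0$; Wu's formula then specializes to $\Sq^2 v_{4n}(M) = v_2(M) \cdot v_{4n}(M) = w_2(M) \cdot v_{4n}(M)$. Writing $w_2(M) = \rho_2(c)$ for an integer lift $c \in H^2(M;\Z)$, the identity to be proven becomes
\[ \langle \rho_2(t) \cdot \Sq^2 \rho_2(t), [M]\rangle = \langle \rho_2(t \cdot c) \cdot v_{4n}(M), [M]\rangle. \]

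Next, I would view the difference $\Phi(M, t)$ of the two sides as a $\Z/2$-valued invariant on spin$^c$ manifolds equipped with a torsion cohomology class. Since $t$ is torsion, the Bockstein sequence \eqref{eq:bseq} (and its mod-$2^k$ analogues) expresses $t$ as the Bockstein image of a class in finite or $\Q/\Z$ coefficients; classifying this by a map $M \to K(\Q/\Z, 4n-1)$ represents $(M, t)$ in the bordism group $\Omega^{\textup{Spin}^c}_{8n+2}(K(\Q/\Z, 4n-1))$, on which $\Phi$ becomes a homomorphism. The main step is then to show $\Phi$ vanishes by reducing to the spin case.

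Geometrically, take a smoothly embedded codimension-$2$ submanifold $V^{8n} \hookrightarrow M$ Poincar\'e dual to $c$. Its normal bundle $N$ satisfies $w_2(N) = \rho_2(c)|_V = w_2(M)|_V$, hence $w_2(V) = w_2(M)|_V + w_2(N) = 0$, so $V$ is spin. The projection formula gives $\langle \rho_2(t\cdot c) \cdot v_{4n}(M), [M]\rangle = \langle \rho_2(i^* t) \cdot i^* v_{4n}(M), [V]\rangle$, and a parallel Cartan-formula expansion of the left-hand side, using $\rho_2(t) = \Sq^1(u)$ for a mod-$2^k$ lift $u$ of a Bockstein preimage of $t$, recasts that side as a pairing on $V$ as well. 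The principal obstacle is the dimensional mismatch: $V$ has dimension $8n$ and Theorem \ref{thm:equiv} does not apply to $V$ directly. My plan is then to construct an auxiliary spin $(8n{+}2)$-manifold $\widetilde M$---for instance by surgering $M$ along a tubular neighborhood of $V$ and gluing in the disk bundle of a $w_2$-trivialized line bundle---carrying a torsion class $\widetilde t$ with $\Phi(M, t) = \Phi(\widetilde M, \widetilde t)$, after which Theorem \ref{thm:equiv} combined with the Landweber--Stong theorem completes the proof. The torsion hypothesis on $t$ is essential for the extension across this bordism: for a non-torsion integer class the extension obstruction does not vanish, consistent with the known failure of the identity on spin$^c$ manifolds for arbitrary integer classes.
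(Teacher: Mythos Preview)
Your first step contains a fatal error: the identity $\Sq^2 v_{4n}(M) = v_2(M)\cdot v_{4n}(M)$ is false in general. The Wu-class property $\langle \Sq^2 y,[M]\rangle = \langle v_2(M)\cdot y,[M]\rangle$ holds only for classes $y$ of degree $\dim M - 2 = 8n$, whereas $v_{4n}(M)$ has degree $4n$. There is no relation of the form you claim. In fact the paper (via Proposition~\ref{prop:theta0}, invoking Landweber--Stong's Lemma~3.2) exhibits spin manifolds $M^{8n+2}$ and classes $x$ with $\langle \rho_2(x)\cdot \Sq^2 v_{4n}(M),[M]\rangle \neq 0$; since $v_2(M)=w_2(M)=0$ for spin $M$, this already shows $\Sq^2 v_{4n}(M)\neq v_2(M)\cdot v_{4n}(M)$. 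Your entire reduction proceeds from this incorrect simplification, so the argument collapses at the outset.

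Even setting that aside, the remainder of the sketch has gaps. The claim that $\rho_2(t)=\Sq^1(u)$ for some mod-$2^k$ class $u$ is not valid for general torsion $t$: the image of $\Sq^1$ in $H^{4n}(M;\Z/2)$ is $\rho_2$ of the $2$-torsion subgroup, not of all torsion, so a class of order $4$ already fails. And the proposed surgery from $M$ to an auxiliary spin $(8n{+}2)$-manifold $\widetilde M$ with $\Phi(M,t)=\Phi(\widetilde M,\widetilde t)$ is not a construction but a wish; you give no mechanism for producing $\widetilde M$ or $\widetilde t$, and no argument that the invariant is preserved. The paper's proof is of an entirely different character: it first shows abstractly (Theorem~\ref{thm:theta}) that some universal class $\Theta\in H^{4n+2}(B\mathrm{Spin}^c;\Z/2)$ works, and then pins down $\Theta=\Sq^2 v_{4n}$ by a lengthy bordism-group computation (Section~\ref{s:main}).
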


\begin{remark}
For $n=1$, this result has been proved by Crowley and the author \cite[Theorem 2.2]{cy20}.
\end{remark}

\begin{remark}
It follows from Landweber and Stong \cite[p. 637]{ls87} that there is no class $y \in H^{4n+2}(B\mathrm{Spin}^c; \Z/2)$ with $n> 0$ such that the identity
\[ \langle x \cdot Sq^2 x, [M] \rangle = \langle x \cdot \tau_{M}^{\ast} (y), [M] \rangle \]
holds for all $(8n{+}2)$-dimensional spin$^c$ manifolds $M$ and all $x \in H^{4n}(M;\Z/2)$. 
It is therefore natural to ask whether the identity in Theorem \ref{thm:main} holds for any $x \in H^{4n}(M)$, not merely for torsion classes.
Unfortunately, this question remains open.
\end{remark}

\begin{remark}
One may also ask whether there exists a universal class $y \in H^{n+1}(B\mathrm{Spin}^c; \Z/2)$ such that 
\[ \langle \rho_{2} ( t ) \cdot \Sq^{2} \rho_{2} ( t ) , [M] \rangle = \langle \rho_{2}( x ) \cdot \tau_{M}^{\ast} (y) , [M] \rangle\]
holds for any $2n$-dimensional spin$^c$ manifold $M$ and any $t \in TH^{n-1}(M)$.
For $n \le 3$, the answer is affirmative, and one may take $y = 0$.
For $n=4k$, $k \ge 1$, it follows from Landweber and Stong \cite[p. 638]{ls87} that the answer is negative.
The cases $n=4k+2$ and $n=4k+3$ for $k \ge 1$ remain unresolved.
\end{remark}

As an application of our main theorem, we obtain the following corollary. 
\begin{corollary}\label{coro:bsqv}
For any $(8n{+}1)$-dimensional spin$^c$ manifold $M$, we have 
\[ \beta^{\Z/2} ( Sq^2 v_{4n}(M) ) = 0,\]
and consequently,  $Sq^3 v_{4n}(M)  = 0$.
\end{corollary}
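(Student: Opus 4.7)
The plan is to bootstrap Theorem \ref{thm:main} from dimension $8n{+}2$ down to dimension $8n{+}1$ by crossing with a circle. Given a closed oriented spin$^c$ manifold $M$ of dimension $8n{+}1$, the product $N := M \times S^1$ is a closed oriented spin$^c$ manifold of dimension $8n{+}2$, to which Theorem \ref{thm:main} applies. For an arbitrary torsion class $t_1 \in TH^{4n-1}(M;\Z)$ and the generator $s \in H^1(S^1;\Z)$, the cross product $t := t_1 \times s \in H^{4n}(N;\Z)$ inherits the order of $t_1$ as an annihilator, so it lies in $TH^{4n}(N;\Z)$.

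Feeding this $t$ into Theorem \ref{thm:main} and simplifying both sides of
\[ \langle \rho_2(t) \cdot Sq^2 \rho_2(t), [N] \rangle \;=\; \langle \rho_2(t) \cdot Sq^2 v_{4n}(N), [N] \rangle \]
will produce the key input. Using the Cartan formula together with $\rho_2(s)^2 = 0$ and $Sq^k \rho_2(s) = 0$ for $k \ge 1$ (both forced by $\dim S^1 = 1$), one computes $Sq^2 \rho_2(t) = Sq^2 \rho_2(t_1) \times \rho_2(s)$, so the left-hand side acquires a factor of $\rho_2(s)^2 = 0$ and vanishes. For the right-hand side, the product formula $v(M \times S^1) = v(M) \times v(S^1) = v(M) \times 1$ gives $Sq^2 v_{4n}(N) = Sq^2 v_{4n}(M) \times 1$, and evaluating on $[N] = [M] \times [S^1]$ reduces the expression to $\langle \rho_2(t_1) \cdot Sq^2 v_{4n}(M), [M] \rangle$. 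This produces the mod $2$ vanishing
\[ \langle \rho_2(t_1) \cdot Sq^2 v_{4n}(M), [M] \rangle \;=\; 0 \in \Z/2 \quad \text{for every } t_1 \in TH^{4n-1}(M;\Z). \]

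The final step is to upgrade this mod $2$ identity to the integral statement $\gamma := \beta^{\Z/2}(Sq^2 v_{4n}(M)) = 0$. The class $\gamma$ is $2$-torsion by exactness of the Bockstein sequence, so it lies in $TH^{4n+3}(M;\Z)$, which is paired non-degenerately with $TH^{4n-1}(M;\Z)$ by the torsion linking form $\lambda$ of the closed oriented $(8n{+}1)$-manifold $M$. The standard cochain-level computation yields
\[ \lambda\bigl(t_1,\, \beta^{\Z/2}(c)\bigr) \;\equiv\; \tfrac{1}{2}\,\langle \rho_2(t_1) \cdot c,\; [M]\rangle \pmod{\Z} \]
for any $c \in H^{4n+2}(M;\Z/2)$; combined with the mod $2$ vanishing above, this forces $\lambda(\cdot, \gamma) \equiv 0$, so non-degeneracy gives $\gamma = 0$. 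The consequence $Sq^3 v_{4n}(M) = 0$ then follows by applying $\rho_2$ and using $\rho_2 \circ \beta^{\Z/2} = Sq^1$ together with the Adem relation $Sq^1 Sq^2 = Sq^3$. The main point requiring attention is the cochain-level derivation of the linking-form identity and its $\tfrac{1}{2}$-factor; the Cartan and cross-product simplifications on $N = M \times S^1$ are routine.
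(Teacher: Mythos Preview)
Your argument is correct and gives a more elementary route than the paper's. The paper deduces the corollary from Proposition~\ref{prop:sqtheta}, whose proof is a bordism argument: it passes through the suspension $\Sigma K(\Q/\Z,4n{-}2)$ to force the left-hand side of the bilinear identity to vanish (since cup products on a suspension are trivial), and then invokes Lemma~\ref{lem:rho2} to convert the vanishing against all torsion classes into $\beta^{\Z/2}(\tau_M^\ast\Theta)=0$. Your $M\times S^1$ trick achieves exactly the same effect---$\rho_2(s)^2=0$ plays the role of the trivial cup product on a suspension---but does so concretely, avoiding the bordism-group machinery entirely. Your linking-form step is equivalent to the paper's Lemma~\ref{lem:rho2} (which cites Massey): the equivalence $\beta^{\Z/2}(x)=0 \Leftrightarrow \rho_2(t)\cdot x=0$ for all torsion $t$ is precisely the content of that lemma, so you could simply quote it in place of the cochain-level linking computation. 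The trade-off is that the paper's Proposition~\ref{prop:sqtheta} is proved for the abstract class $\Theta$ \emph{before} $\Theta$ is identified, and is then used as one of the constraints that pins $\Theta$ down to $Sq^2 v_{4n}$; your argument, by contrast, takes Theorem~\ref{thm:main} as input, which is fine for the corollary but could not substitute for Proposition~\ref{prop:sqtheta} in the paper's internal logic.
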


\begin{remark}
It follows immediately from Corollary \ref{coro:bsqv} that 
$ \beta^{\Z/2} ( Sq^2 v_{4n}(M) )= 0$,
and hence $Sq^3 v_{4n}(M) = 0$, for any spin$^c$ manifold $M$ with $\dim M \le 8n+1$.
\end{remark}

\begin{remark}
One can see from the proof of Theorem \ref{thm:main} that, with the exception of the case $n = 2$, $Sq^3 v_{4n}$ is the only nonzero class of dimension $4n+3$ that vanishes on every spin$^c$ manifold of dimension $\le 8n+1$.
\end{remark}

\begin{remark}
Diaconescu, Moore and Witten \cite[Appendix D]{dmw02} proved that there exists a spin $10$-manifold $M$ with $\beta^{\Z/2}(Sq^2 v_4(M)) \neq 0$.
\end{remark}

\begin{remark}
Wilson \cite{wi73}  and Landweber and Stong \cite{ls87} both demonstrated that $Sq^3 v_{4n} =0$ for every spin manifold of dimension $8n+2$.
However, we cannot extend this conclusion to spin$^c$ manifolds.
In fact, we conjecture that $Sq^3 v_{4n} \ne 0$, and hence $\beta^{\Z/2} ( Sq^2 v_{4n} ) \neq 0$, for some $(8n{+}2)$-dimensional spin$^c$ manifold.
\end{remark}

For an $(8n {+} 2)$-dimensional spin$^{c}$ manifold $M$, let $TV^{4n}(M; \Z/2)$ denote the subspace of $H^{4n}(M; \Z/2)$ spanned by $\rho_2 ( TH^{4n}(M) )$ and $v_{4n}(M)$. 
Consider the bilinear form
\begin{equation*}
[~,~] \colon TV^{4n}(M; \Z/2) \times TV^{4n}(M; \Z/2) \rightarrow \Z/2
\end{equation*}
defined by $[x, y] = \langle x \cdot \Sq^{2} y , [M]\rangle$.
Since $Sq^1 v_{4n}(M) =0 $ by Lemma \ref{lem:v2k} (Subsection \ref{ss:pfmain}), and since $v_{2}(M) = w_{2}(M) \in \rho_{2}( H^{2}(M) )$, the definition of the Wu class implies that the bilinear form $[~,~]$ is symmetric.

\begin{corollary}\label{coro:rank}
For an $(8n {+} 2)$-dimensional spin$^{c}$ manifold $M$, the expression
\[ \langle ( w_4(M)+w_2^2(M) ) \cdot w_{8n-2}(M), [M] \rangle = \langle v_{4n}(M) \cdot Sq^2 v_{4n}(M), [M]\rangle \]
is equal to the mod $2$ rank of the bilinear form $[~, ~]$ on $TV^{4n}(M; \Z/2)$.
\end{corollary}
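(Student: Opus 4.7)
The plan is in two independent parts: a linear-algebra argument identifying $\langle v_{4n}\cdot Sq^2 v_{4n}, [M]\rangle$ with the mod~$2$ rank of $[\,,\,]$, followed by a Wu-formula computation identifying the two displayed cohomology numbers.

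For the linear algebra, I first invoke Theorem~\ref{thm:main} to observe that the quadratic function $q(x) := [x,x]$ on $V := TV^{4n}(M;\Z/2)$ coincides with the linear functional $\ell(x) := [x, v_{4n}]$: Theorem~\ref{thm:main} gives $q(\rho_2 t) = \ell(\rho_2 t)$ for $t \in TH^{4n}(M)$, while $q(v_{4n}) = \ell(v_{4n})$ is tautological, and the $\Z/2$-linearity of $q$ (automatic for symmetric forms in characteristic~$2$) extends this to all of $V$. I then establish the following general fact: for a symmetric $\Z/2$-bilinear form $B$ on a finite-dimensional $\Z/2$-vector space with $B(x,x) = B(x,v)$ for all $x$ and some fixed $v$, the rank of $B$ is congruent to $B(v,v)$ modulo~$2$. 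To see this, write $B = A + \ell\otimes\ell$ with $\ell(\cdot) := B(\cdot,v)$ and $A(x,y) := B(x,y) + \ell(x)\ell(y)$ alternating (hence of even rank), and analyse $\textup{rad}(B)$ by cases on $\ell(v)$: when $\ell(v)=0$, one has $\ell = A^\flat(v) \in \textup{image}(A^\flat) = \textup{ann}(\textup{rad}(A))$ and so $\textup{rad}(B) = \textup{rad}(A)$, forcing $\textup{rank}(B)$ even; when $\ell(v)=1$, one has $v\in\textup{rad}(A)\setminus\ker\ell$, so $\textup{rad}(B) = \textup{rad}(A)\cap\ker\ell$ has codimension $1$ in $\textup{rad}(A)$ and $\textup{rank}(B)$ is odd. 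Either way $\textup{rank}(B)\equiv\ell(v)=B(v,v)\pmod 2$; applied with $B=[\,,\,]$ and $v=v_{4n}$ this yields $\textup{rank}([\,,\,])\equiv\langle v_{4n}\cdot Sq^2 v_{4n}, [M]\rangle\pmod 2$.

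For the cohomological identity, the spin$^c$ relation $w_3 = Sq^1 w_2 = 0$ gives $v_4 = w_4 + w_2^2$, so the LHS of the corollary equals $\langle v_4\cdot w_{8n-2}, [M]\rangle$. I then use that odd Wu classes $v_{2j+1}$ vanish on every oriented manifold (a consequence of $Sq^{2j+1}=Sq^1 Sq^{2j}$ and $v_1=0$) to collapse Wu's identity $w_{4n+2}=\sum_i Sq^i v_{4n+2-i}$ into $Sq^2 v_{4n}=w_{4n+2}+v_{4n+2}+\sum_{k\ge 2} Sq^{2k}v_{4n+2-2k}$. Multiplying by $v_{4n}$ and pairing with $[M]$, the $v_{4n}v_{4n+2}$ term dies because $Sq^{4n+2}v_{4n}=0$ by degree, and the remaining terms are to be rearranged via Cartan's formula and the Wu identity $\langle Sq^k\alpha, [M]\rangle=\langle v_k\cdot\alpha, [M]\rangle$ to match $\langle v_4\cdot w_{8n-2}, [M]\rangle$. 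The main obstacle is this rearrangement for $n>1$: when $n=1$ one has $4n+2=8n-2=6$, so $\langle v_{4n}w_{4n+2}, [M]\rangle=\langle v_4 w_{8n-2}, [M]\rangle$ directly and the residual summation is empty ($Sq^4 v_2=0$ by degree); for $n\ge 2$ several $Sq^{2k}v_{4n+2-2k}$ terms must be combined with $\langle v_{4n}w_{4n+2}, [M]\rangle$ through Cartan and Wu until they collapse to the desired characteristic number, and managing this combinatorial step cleanly is where the technical weight of the proof resides.
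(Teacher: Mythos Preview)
Your linear-algebra argument for the rank congruence is correct and essentially reconstructs the Lusztig--Milnor--Peterson lemma that the paper simply cites; your treatment is more self-contained, which is a plus.

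The cohomological identity, however, you leave genuinely unfinished for $n\ge 2$, and the route you have chosen makes the computation harder than it needs to be. Expanding $Sq^2 v_{4n}$ through the Wu relation for $w_{4n+2}$ leaves you with a sum $\sum_{k\ge 2} Sq^{2k} v_{4n+2-2k}$ and the unrelated-looking term $\langle v_{4n}\cdot w_{4n+2},[M]\rangle$, which you would then have to massage into $\langle v_4\cdot w_{8n-2},[M]\rangle$; you acknowledge this combinatorial step as open, and it is not obvious how to carry it through cleanly.

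The paper instead attacks the identity from the other side. Because $\dim M=8n+2$, one has $v_j(M)=0$ for $j>4n+1$ and $v_{\text{odd}}(M)=0$; plugging this into Wu's formula $w_k=\sum_i Sq^i v_{k-i}$ with $k=8n-2$ collapses the entire sum to a single term:
\[
w_{8n-2}(M)=Sq^{4n-2}v_{4n}(M).
\]
This is the key observation you are missing. With it,
\[
(w_4+w_2^2)\cdot w_{8n-2}=v_4\cdot Sq^{4n-2}v_{4n}=Sq^4 Sq^{4n-2}v_{4n},
\]
and a single Adem relation $Sq^4 Sq^{4n-2}=\binom{4n-3}{4}Sq^{4n+2}+Sq^{4n}Sq^2$ together with $Sq^{4n+2}v_{4n}=0$ gives $Sq^{4n}Sq^2 v_{4n}=v_{4n}\cdot Sq^2 v_{4n}$ directly. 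So the ``technical weight'' you anticipate disappears once you apply Wu's formula to $w_{8n-2}$ rather than to $w_{4n+2}$.
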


\begin{proof}
It follows directly from the proof of the theorem in \cite{lmp69} that 
\[ \langle v_{4n}(M) \cdot Sq^2 v_{4n}(M), [M]\rangle \]
equals the mod $2$ rank of the bilinear form $[~, ~]$.
To complete the proof, we verify the stated equality.
Since $v_{odd}(M) = 0$ and $v_j(M) = 0$ for $j > 4n{+}1$, Wu's formula  (cf. \cite[p. 132, Theorem 11.14]{ms74}) 
\begin{equation} \label{eq:wu}
w_k(M) = \Sigma \Sq^i v_{k-i}(M)
\end{equation}
implies that 
$v_4(M) = w_4(M) + w^2_2(M)$ and $w_{8n-2}(M) = Sq^{4n-2}v_{4n}(M)$.
Therefore, 
\begin{align*}
( w_4(M) + w_2^2(M) ) \cdot w_{8n-2}(M) = v_4(M) \cdot Sq^{4n-2} v_{4n}(M) = Sq^4 Sq^{4n-2} v_{4n}(M).
\end{align*}
Furthermore, by the Adem relation \eqref{eq:adem} below, $Sq^4 Sq^{4n-2} = \binom{4n-3}{4} \Sq^{4n+2} + \Sq^{4n} \Sq^2 $.
Since $\Sq^{4n+2} v_{4n}(M) = 0$, we obtain
\begin{align*}
( w_4(M) + w_2^2(M) ) \cdot w_{8n-2}(M)  = \Sq^{4n} \Sq^2 v_{4n}(M) = v_{4n}(M) \cdot \Sq^2 v_{4n}(M),
\end{align*}
which completes the proof.
\end{proof}

The paper is organized as follows.
Section \ref{s:equiv} provides necessary notation and the proof of Theorem \ref{thm:equiv}.
The proof of Theorem \ref{thm:main} is more complicated and Sections \ref{s:exis}-\ref{s:main} are devoted to it.
In Section \ref{s:exis} we show that there exists exists a class $\Theta \in H^{4n}(B\mathrm{Spin}^c; \Z/2)$ such that 
\[ \langle \rho_2(t) \cdot \Sq^2 \rho_2 (t), [M]\rangle = \langle \rho_2(t) \cdot \tau_{M}^{\ast}(\Theta), [M] \rangle \] 
holds for any $(8n{+}2)$-dimensional spin$^c$ manifold $M$ and any torsion class $t \in TH^{4n}(M)$, where $\tau_{M} \colon M \to B\mathrm{Spin}^c$ classifies the stable tangent bundle of $M$. 
Section \ref{s:prop} describes some elementary properties of $\Theta$. 
Finally, in Section \ref{s:main}, based on computations of reduced spin$^c$ bordism groups of $C_{\Psi}$ arising from the cofibration \eqref{cof:Psi}, the class $\Theta$ is uniquely determined.




\section{Proof of Theorem \ref{thm:equiv}}
\label{s:equiv}
This section is devoted to the proof of Theorem \ref{thm:equiv}.

We begin by establishing the necessary notation.
For any $CW$-complex $X$, consider the Bockstein long exact sequence associated to the coefficient sequence 
$ \Z \to \Q \to \Q/\Z$:
\begin{align}\label{boc:q/z}
\cdots \to H^n(X; \Q) \xrightarrow{\rho} H^n(X;\Q/\Z) \xrightarrow{\beta^{\Q/\Z}} H^{n+1}(X) \to H^{n+1}(X;\Q)\to \cdots,
\end{align}
where $\beta^{\Q/\Z}$ denotes the Bockstein homomorphism.

Let $K(G, n)$ denote the Eilenberg-MacLane space of type $(G, n)$, and let
$l_{n} \in H^{n}(K(\Z, n))$ and $l_{n}^{T} \in H^{n}(K(\Q/\Z, n); \Q/\Z)$
be the fundamental classes. 
By the Brown representation theorem (cf. \cite[p. 182, Theorem 10.21]{sw02b}), there exists a Bockstein map
\begin{equation}\label{map:beta}
\bar{\beta} \colon K(\Q/\Z, n) \rightarrow K(\Z, n{+}1)
\end{equation}
that corresponds to the Bockstein homomorphism 
\[ \beta^{\Q/\Z} \colon H^{n}(K(\Q/\Z, n); \Q/\Z) \rightarrow H^{n+1}(K(\Q/\Z, n)). \]
For any $x \in H^{n+1}(X)$ and $z \in H^{n}(X; \Q/\Z)$,
we denote by 
$$f_{x} \colon X \rightarrow K(\Z, n+1) ~ (\text{respectively,~} f_{z} \colon X \rightarrow K(\Q/\Z, n))$$
 the maps satisfying $f_{x}^{\ast} (l_{n+1}) = x $ (respectively, $f_{z}^{\ast} (l_{n}^{T}) = z$).
 
Now, suppose $t \in TH^{n+1}(X)$, the torsion subgroup of $H^{n+1}(X)$. The exactness of the Bockstein sequence \eqref{boc:q/z} implies the existence of a class 
$ z \in H^{n}(X; \Q/\Z) $ such that 
\[ \beta^{\Q/\Z} (z) = t. \]
Consequently, by the definition of $\bar{\beta}$, we have 
\begin{equation}\label{eq:betaf}
f_{t} = \bar{\beta} \circ f_{z}.
\end{equation}

For any $CW$-complex $X$, let $\wt{\Omega}_{\ast}^{\mathrm{Spin}} (X)$ denote the reduced spin bordism groups of $X$.
An element of $[N, f] \in \wt{\Omega}^{\mathrm{Spin}}_n(X)$ is represented by a map $f \colon N \to X$ from a closed spin $n$-manifold $N$.

\begin{lemma}\label{lem:qz}
For any positive integer $n$, the induced homomorphism 
\[ \bar{\beta}_{\ast} \colon \wt{\Omega}_{8n+2}^{\mathrm{Spin}}(K(\Q/\Z, 4n{-}1)) \to \wt{\Omega}_{8n+2}^{\mathrm{Spin}}(K(\Z, 4n)) \]
is an isomorphism.
\end{lemma}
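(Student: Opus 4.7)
The plan is to interpret $\bar\beta$ as the projection in a homotopy fiber sequence, apply the Serre spectral sequence for spin bordism to this fibration, and compare it with the Atiyah--Hirzebruch spectral sequence (AHSS) for $K(\Z, 4n)$. The short exact coefficient sequence $0 \to \Z \to \Q \to \Q/\Z \to 0$ induces the fiber sequence
\[K(\Q, 4n-1) \to K(\Q/\Z, 4n-1) \xrightarrow{\bar\beta} K(\Z, 4n),\]
so writing $F := K(\Q, 4n-1)$, one has a Serre spectral sequence
\[E^2_{p, q} = H_p(K(\Z, 4n); \Omega^{\mathrm{Spin}}_q(F)) \Longrightarrow \Omega^{\mathrm{Spin}}_{p+q}(K(\Q/\Z, 4n-1)),\]
and the map $\bar\beta_*$ sits in a natural morphism from this spectral sequence to the AHSS of $K(\Z, 4n)$, induced at $E^2$ by the fiber projection $F \to *$. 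The goal is to show this morphism becomes an isomorphism on the abutment in total degree $8n+2$, and then to restrict to reduced groups.

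A preliminary computation handles $\wt{\Omega}^{\mathrm{Spin}}_*(F)$. Since $F$ is a rational space whose reduced integral homology is a one-dimensional $\Q$-vector space in degree $4n-1$ (as $4n-1$ is odd, so $H^*(F; \Q)$ is exterior on a generator in this degree), and since $\Omega^{\mathrm{Spin}}_q \otimes \Q$ is nonzero precisely when $4 \mid q$, running the AHSS for $\wt{\Omega}^{\mathrm{Spin}}_*(F)$ shows that this group is isomorphic to $\Q$ when $q \geq 4n-1$ and $q \equiv 4n-1 \pmod{4}$ and vanishes otherwise. Decomposing $\Omega^{\mathrm{Spin}}_q(F) = \Omega^{\mathrm{Spin}}_q \oplus \wt{\Omega}^{\mathrm{Spin}}_q(F)$, the ``point part'' of $E^2$ recovers the AHSS for $K(\Z, 4n)$, while the ``reduced part'' is supported only at positions $(p, q)$ where $4n \mid p$ (since $H_*(K(\Z, 4n); \Q) = \Q[\iota_{4n}]$) and $q - (4n-1)$ is a nonnegative multiple of $4$.

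A direct divisibility check shows that no such $(p, q)$ lies in total degrees $8n+1$ or $8n+2$, so in these total degrees the two spectral sequences have identical $E^2$-pages and $\bar\beta_*$ is an isomorphism on $E^2$. The main obstacle is to control the differentials affecting total degree $8n+2$, particularly those with source in total degree $8n+3$ where the reduced part does contribute copies of $\Q$, occurring at $(0, 8n+3)$ and $(4n, 4n+3)$ (plus one extra position when $n=1$). The crucial observation is that any differential from a reduced $\Q$-summand targeting the point-part in total degree $8n+2$ must either land in a column with $0 < p < 4n$ (which vanishes by the $(4n-1)$-connectivity of $K(\Z, 4n)$) or land at $(0, 8n+2)$, whose target coefficient $\Omega^{\mathrm{Spin}}_{8n+2}$ is torsion because $8n+2 \not\equiv 0 \pmod 4$; together with the fact that incoming differentials to the reduced sources on earlier pages originate in torsion groups (since $H_p(K(\Z, 4n); \Q)$ vanishes in the relevant column range) and therefore cannot disturb the divisible $\Q$-summand, we conclude that all such differentials into total degree $8n+2$ vanish.

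Combining these observations, $\bar\beta_*$ induces an isomorphism at every page and on $E^\infty$ in total degree $8n+2$, hence on the abutment $\Omega^{\mathrm{Spin}}_{8n+2}(-)$. Since the summand $\Omega^{\mathrm{Spin}}_{8n+2}$ coming from the bordism of a point appears identically on both sides, restricting to reduced bordism groups yields the desired isomorphism, completing the proof.
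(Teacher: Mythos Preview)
Your fibre-sequence approach is genuinely different from the paper's, which instead passes to the \emph{cofibre} $C_{\bar\beta}$ of $\bar\beta$. The paper then observes that $H_*(C_{\bar\beta};\Z)\cong H_*(K(\Z,4n);\Q)$ is a $\Q$-vector space concentrated in degrees divisible by $4n$, so in the Atiyah--Hirzebruch spectral sequence for $\wt\Omega^{\mathrm{Spin}}_*(C_{\bar\beta})$ every term $E^2_{p,q}=\wt H_p(C_{\bar\beta};\Omega^{\mathrm{Spin}}_q)$ with $p+q\in\{8n+2,8n+3\}$ vanishes (as $\Omega^{\mathrm{Spin}}_q$ is torsion for $q\not\equiv 0\pmod 4$). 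The long exact sequence of the cofibration then gives the isomorphism immediately. This is considerably shorter: it avoids comparing two spectral sequences altogether.

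Your argument, as written, has a gap. You show that the ``reduced part'' of the Serre $E^2$-page lies in total degrees $\equiv 3\pmod 4$ and that differentials \emph{originating} in that reduced part and landing in total degree $8n+2$ are zero. But this alone does not prove that the comparison map $\phi^r\colon E^r\to E'^r$ remains an isomorphism in total degree $8n+2$: you also need to know that the differentials on the point part of $E^r$ in degree $8n+3$ agree with those in $E'^r$, and the point/reduced decomposition need not survive as a direct-sum splitting past $E^2$. In fact, once $\phi^r$ is an isomorphism in degree $8n+2$, the vanishing of $d_r^E$ on $\ker\phi^r$ is \emph{automatic} (since $\phi^r\circ d_r^E=d_r^{E'}\circ\phi^r$ and $\phi^r$ is injective on the target), so your torsion-versus-$\Q$ observation is redundant rather than the crux. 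What you actually need is a short induction: maintain simultaneously that $\phi^r$ is an isomorphism in degree $8n+2$, injective in degree $8n+1$, and surjective in degree $8n+3$; each of these three properties is preserved using only the other two, and they all hold at $E^2$. (A minor separate point: $\wt\Omega^{\mathrm{Spin}}_q(K(\Q,4n-1))\cong\Omega^{\mathrm{Spin}}_{q-4n+1}\otimes\Q$ is a $\Q$-vector space of possibly higher dimension, not just $\Q$; this does not affect the argument.)
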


\begin{proof}
Let $C_{\bar{\beta}}$ denote the mapping cone of $\bar{\beta}$, which gives rise to the cofibration sequence:
\[ K(\Q/\Z, 4n-1) \xrightarrow{\bar{\beta}} K(\Z, 4n) \to C_{\bar{\beta}}.\]
This sequence induces a long exact sequence in bordism groups: 
\begin{align}
\cdots \to \wt{\Omega}_{8n+3}^{\mathrm{Spin}}(C_{\bar{\beta}}) \to \wt{\Omega}_{8n+2}^{\mathrm{Spin}}(K(\Q/\Z, 4n-1)) \xrightarrow{\bar{\beta}_{\ast}} \wt{\Omega}_{8n+2}^{\mathrm{Spin}}(K(\Z, 4n)) \to \wt{\Omega}_{8n+2}^{\mathrm{Spin}}(C_{\bar{\beta}}) \to \cdots
\end{align}
Thus, to prove the lemma, it suffices to show that the bordism groups $\wt{\Omega}_{8n+3}^{\mathrm{Spin}}(C_{\bar{\beta}})$ and $\wt{\Omega}_{8n+2}^{\mathrm{Spin}}(C_{\bar{\beta}})$ are both trivial.

This conclusion follows from the Atiyah-Hirzebruch spectral sequence for $C_{\bar{\beta}}$: 
\[
\bigoplus \wt{H}_{p}(C_{\bar{\beta}}; \Omega_{q}^{\mathrm{Spin}}) \implies 
\wt{\Omega}_{p+q}^{\mathrm{Spin}}(C_{\bar{\beta}}).
\]
By construction, the integral homology of $C_{\bar{\beta}}$ satisfies
\begin{align}\label{eq:hcb}
H_{\ast}(C_{\bar{\beta}}) \cong H_{\ast}(K(\Z, 4n);\Q).
\end{align}
Furthermore, applying the universal coefficient theorem yields
\begin{align}\label{eq:hk4nq}
H_{\ast}(K(\Z, 4n);\Q) \cong H^{\ast}(K(\Z, 4n);\Q) \cong \Q[x],
\end{align}
where $x \in H^{4n}(K(\Z, 4n);\Q)$ is a generator (cf. Hatcher \cite[p. 550, Proposition 5.21]{hat-bat}).
Since the spin bordism groups $\Omega_q^{\mathrm{Spin}}$ are torsion for $q \not \equiv 0 \mod 4$ (cf. Stong \cite[p. 340, Theorem]{st68b}), Equations \eqref{eq:hcb}, \eqref{eq:hk4nq}, and the universal coefficient theorem together imply that 
\[ \wt{H}_{p}(C_{\bar{\beta}}; \Omega_{q}^{\mathrm{Spin}}) \cong \wt{H}_{p}(C_{\bar{\beta}}; \Z) \otimes_{\Z} \Omega_{q}^{\mathrm{Spin}} = 0 \]
for $p+q = 8n+2$ and $8n+3$. Therefore, $\wt{\Omega}_{8n+3}^{\mathrm{Spin}}(C_{\bar{\beta}}) = \wt{\Omega}_{8n+2}^{\mathrm{Spin}}(C_{\bar{\beta}}) = 0$ and the desired isomorphism follows.
\end{proof}

\begin{proof}[Proof of Theorem \ref{thm:equiv}]
That implication $1) \Rightarrow 2)$ is immediate. 
To prove that $2)$ implies $1)$, we define homomorphisms 
\begin{align*}
\varphi \colon \wt{\Omega}_{8n+2}^{\mathrm{Spin}}(K(\Z, 4n)) \to \Z/2, \\
\phi \colon \wt{\Omega}_{8n+2}^{\mathrm{Spin}}(K(\Z, 4n)) \to \Z/2,
\end{align*}
by 
\begin{align*}
 \varphi([N, f]) & = \langle \rho_2 ( f^{\ast}(l_{4n}) ) \cdot \Sq^{2} \rho_2 ( f^{\ast}(l_{4n}) ), ~[N] \rangle, \\
 \phi([N,f]) & = \langle \rho_2 ( f^{\ast}(l_{4n}) ) \cdot \Sq^{2} v_{4n}(N), ~[N] \rangle,
 \end{align*}
for any bordism class $[N, f]\in  \wt{\Omega}^{\mathrm{Spin}}_{8n+2}(K(\Z, 4n))$ represented by $f\colon N \rightarrow K(\Z, 4n)$.

With the notation as above, for any $(8n{+}2)$-dimensional spin manifold $M$ and any nonzero $x \in H^{4n}(M)$, the pair $(M, f_x)$ determines a bordism class
$ [M, f_x] \in \wt{\Omega}_{8n+2}^{\mathrm{Spin}}(K(\Z, 4n))$. 
Since $\bar{\beta}_{\ast}$ is an isomorphism by Lemma \ref{lem:qz}, there exists a bordism class $[N, f_z] \in \wt{\Omega}_{8n+2}^{\mathrm{Spin}}(K(\Q/\Z, 4n{-}1))$ such that
\begin{align*}
[M, f_x] = \bar{\beta}_{\ast} ( [N, f_z] ) = [N, \bar{\beta} \circ f_z] = [N, f_t],
\end{align*}
where $z \in H^{4n-1}(N; \Q/\Z)$ and $t = \beta^{\Q/\Z} ( z ) \in TH^{4n}(N; \Z)$.
Therefore, by statement $2)$,
\begin{align*}
\langle \rho_2(x) \cdot Sq^2 \rho_2(x), [M] \rangle & = \langle \rho_2 ( f_x^{\ast} (l_{4n}) ) \cdot Sq^2 \rho_2 ( f_x^{\ast} (l_{4n}) ), [M] \rangle \\
& = \varphi([M, f_x]) \\
& = \varphi([N, f_t]) \\
& = \langle \rho_2 ( f_t^{\ast} (l_{4n}) ) \cdot Sq^2 \rho_2 ( f_t^{\ast} (l_{4n}) ), [N] \rangle \\
& = \langle \rho_2 ( t ) \cdot Sq^2 \rho_2 ( t ), [N] \rangle \\
& = \langle \rho_2 (t) \cdot Sq^2 v_{4n}(N), [N] \rangle \\
& = \phi([N, f_t]) \\
& = \phi([M, f_x]) \\
& = \langle \rho_2(x) \cdot Sq^2 v_{4n}(M), [M] \rangle,
\end{align*}
which completes the proof.
\end{proof}


\section{Existence of $\Theta$}  
\label{s:exis}

\begin{theorem}\label{thm:theta}
There exists a class $\Theta \in H^{4n+2}(B\mathrm{Spin}^{c};\Z/2)$, such that for any $(8n{+}2)$-dimensional spin$^{c}$ manifold $M$ and for any torsion class $t\in TH^{4n}(M)$, we have
\begin{equation*}
\langle \rho_{2} (t) \cdot \Sq^{2} \rho_{2}(t), ~[M] \rangle = \langle \rho_{2} (t) \cdot \tau_M^{\ast} (\Theta), ~[M] \rangle
\end{equation*}
where $\tau_M \colon M \rightarrow B\mathrm{Spin}^{c}$ classifies the stable tangent bundle of $M$.
\end{theorem}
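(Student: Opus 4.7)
The plan is to recast the identity as a $\Z/2$-valued characteristic-number identity in the spin$^c$ bordism of $K(\Q/\Z,4n{-}1)$, and then to extract $\Theta$ by a bordism computation along the lines already used in Theorem~\ref{thm:equiv}. First I would write each torsion class $t\in TH^{4n}(M)$ as $t=\beta^{\Q/\Z}(z)$ with $z\in H^{4n-1}(M;\Q/\Z)$, so that $f_t=\bar\beta\circ f_z$ and the pair $(M,t)$ determines a bordism class $[M,f_z]\in\wt{\Omega}_{8n+2}^{\mathrm{Spin}^c}(K(\Q/\Z,4n{-}1))$. I would then introduce the two homomorphisms
\[
\varphi([N,f])=\langle \rho_2((\bar\beta\circ f)^{\ast}l_{4n})\cdot Sq^{2}\rho_2((\bar\beta\circ f)^{\ast}l_{4n}),[N]\rangle,
\]
\[
\phi_{\Theta}([N,f])=\langle \rho_2((\bar\beta\circ f)^{\ast}l_{4n})\cdot\tau_N^{\ast}\Theta,[N]\rangle,
\]
where $\Theta$ ranges over $H^{4n+2}(B\mathrm{Spin}^c;\Z/2)$. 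The theorem is equivalent to the existence of $\Theta$ with $\varphi=\phi_{\Theta}$, and since $\Theta\mapsto\phi_{\Theta}$ is linear, this amounts to showing that $\varphi$ vanishes on $\bigcap_{\Theta}\ker\phi_{\Theta}$.

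A useful preliminary observation is that $t\mapsto Q_M(t):=\langle \rho_2(t)\cdot Sq^2\rho_2(t),[M]\rangle$ is already \emph{additive} on torsion. By the Cartan formula the mixed term in $Q_M(t_1+t_2)$ equals
\[
\langle Sq^2\rho_2(t_1 t_2)+Sq^1\rho_2(t_1)\cdot Sq^1\rho_2(t_2),[M]\rangle,
\]
whose second summand vanishes because $Sq^1\rho_2=\rho_2\beta^{\Z/2}\rho_2=0$ on integral classes, and whose first summand, by Wu's formula and the spin$^c$ identity $v_2(M)=\rho_2(c_1)$, equals the mod $2$ reduction of $\langle t_1 t_2 c_1,[M]\rangle$, a torsion element of $H^{8n+2}(M;\Z)\cong\Z$ and hence zero. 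This already produces some $\theta_M\in H^{4n+2}(M;\Z/2)$ with $Q_M(t)=\langle \rho_2(t)\cdot\theta_M,[M]\rangle$ on torsion, determined only modulo the annihilator of $\rho_2(TH^{4n}(M))$; the real content of Theorem~\ref{thm:theta} is that this choice can be made universal in the stable tangent bundle.

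To achieve universality I would compute (enough of) the bordism group $\wt{\Omega}_{8n+2}^{\mathrm{Spin}^c}(K(\Q/\Z,4n{-}1))$ via the Atiyah-Hirzebruch spectral sequence
\[
H_p(K(\Q/\Z,4n{-}1);\Omega_q^{\mathrm{Spin}^c})\Longrightarrow\wt{\Omega}_{p+q}^{\mathrm{Spin}^c}(K(\Q/\Z,4n{-}1)),
\]
in parallel with the cofibre sequence $K(\Q/\Z,4n{-}1)\xrightarrow{\bar\beta}K(\Z,4n)\to C_{\bar\beta}$ used in Lemma~\ref{lem:qz}, where $H_{\ast}(C_{\bar\beta})\cong H_{\ast}(K(\Z,4n);\Q)$ controls the rational part. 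On a list of representative generators one can evaluate both $\varphi$ and the $\phi_{\Theta}$ for a basis of $H^{4n+2}(B\mathrm{Spin}^c;\Z/2)$, and then read off $\Theta$ by solving a finite linear system over $\Z/2$.

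The main obstacle is that, in contrast to the spin case of Lemma~\ref{lem:qz}, the ring $\Omega_{\ast}^{\mathrm{Spin}^c}$ has both $2$-torsion and free summands in every positive even degree, so the spectral sequence does not collapse for purely dimensional reasons. Controlling the $d_2$-differential (which encodes $Sq^2$ on the mod $2$ homology of $K(\Q/\Z,4n{-}1)$) and the subsequent extension problems, together with matching the output against the known Steenrod module structure on $H^{\ast}(B\mathrm{Spin}^c;\Z/2)$, will be the bulk of the work.
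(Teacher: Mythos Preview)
Your framing---passing to $\wt{\Omega}_{8n+2}^{\mathrm{Spin}^c}(K(\Q/\Z,4n{-}1))$ via $t=\beta^{\Q/\Z}(z)$, and reducing the theorem to $\varphi=\phi_\Theta$ for some $\Theta$---matches the paper's setup exactly, and your additivity observation (using $Sq^1\rho_2=0$ on integral classes together with $v_2=\rho_2(c_1)$) is correct and a nice self-contained remark, though the paper does not state it in this form.

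Where you diverge is in the mechanism for extracting $\Theta$. You propose to compute enough of $\wt{\Omega}_{8n+2}^{\mathrm{Spin}^c}(K(\Q/\Z,4n{-}1))$ via the Atiyah--Hirzebruch spectral sequence and then solve a linear system on generators. The paper bypasses this entirely. It stabilises: for large $r$ one has $\wt{\Omega}_{r+4n+2}^{\mathrm{Spin}^c}(K(\Z,r))\cong H_{4n+2}(B\mathrm{Spin}^c)$ via $[N,f]\mapsto\tau_{N\ast}([N]\cap f^{\ast}l_r)$, so the composite $\bar\psi_{\ast}=\psi_{\ast}\circ\bar\beta_{\ast}$ lands directly in $H_{4n+2}(B\mathrm{Spin}^c)$ and $\phi_\Theta=\Theta\circ\bar\psi_{\ast}$. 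The existence of $\Theta$ then reduces to showing that $\varphi\circ\bar\beta_{\ast}$ vanishes on $\ker\bar\psi_{\ast}$, i.e.\ that $\varphi\circ\bar\beta_{\ast}\circ\partial=0$ where $\partial$ is the connecting map from $\wt{\Omega}_{r+4n+3}^{\mathrm{Spin}^c}(C_{\bar\psi})$ in the cofibre sequence $\Sigma^{r-4n}K(\Q/\Z,4n{-}1)\xrightarrow{\bar\psi}K(\Z,r)\to C_{\bar\psi}$. (The image of $\bar\psi_{\ast}$ is $2$-torsion since the source is torsion and $H_{\ast}(B\mathrm{Spin}^c)$ has only $2$-torsion, so any $\Z/2$-valued map on the image extends.)

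The vanishing of $\varphi\circ\bar\beta_{\ast}\circ\partial$ is proved by a direct argument, not by any bordism-group computation. The key inputs are: (i) the Landweber--Stong relation $Sq^2\,\overline{Sq^{4n+1}l_r}=\delta\,\sigma^{r-4n}(l_{4n}\cdot Sq^2 l_{4n})$ in $H^{\ast}(C_\psi;\Z/2)$, and (ii) the fact that $h^{\ast}(\overline{Sq^{4n+1}l_r})$ lifts to a \emph{torsion} integral class $t_{\bar\psi}\in H^{r+4n+1}(C_{\bar\psi})$. For a relative bordism class $[(W,\partial W),(f,g)]$ one then computes $\varphi\circ\bar\beta_{\ast}\circ\partial$ as $\langle w_2(W)\cdot f^{\ast}\rho_2(t_{\bar\psi}),[W,\partial W]\rangle$; since $W$ is spin$^c$, $w_2(W)=\rho_2(c)$, and $c\cdot f^{\ast}(t_{\bar\psi})$ is a torsion element of $H^{r+4n+3}(W,\partial W)\cong\Z$, hence zero. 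This is essentially the same spin$^c$ torsion trick you used in your additivity argument, applied at the level of the cofibre rather than on $M$ itself.

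So your plan is not wrong, but the AHSS route you outline is both substantially harder (for the reasons you yourself flag) and unnecessary: the stabilisation to $K(\Z,r)$ together with the cofibre $C_{\bar\psi}$ gives a clean factorisation argument that never touches the internal structure of the bordism group.
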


To prove this theorem, we require some preliminaries.

For any $CW$-complex $X$, denote by $\wt{\Omega}_{\ast}^{\mathrm{Spin}^c} (X)$ the reduced spin$^c$ bordism groups of $X$.
An element of $[N, f] \in \wt{\Omega}^{\mathrm{Spin}^c}_n(X)$ is represented by a map $f \colon N \to X$
from a closed spin$^c$ $n$-manifold $N$.
For any positive integer $i$ and $r$, define a homomorphism
\[ \mathcal{P} \colon \wt{\Omega}_{r+i}^{\mathrm{Spin}^c}( K(\Z, r) ) \to H_i(B\mathrm{Spin}^c) \]
by
\[ \mathcal{P} ([N, f]) = \tau_{N\ast} ( [N] \cap f^{\ast}(l_r) ) \]
for any bordism class $[N, f]\in  \wt{\Omega}^{\mathrm{Spin}^c}_{r+i}(K(\Z, r))$ with $f\colon N \rightarrow K(\Z, r)$.

\begin{lemma}\label{lem:oh}
For any fixed positive integer $i$, and for sufficiently large $r$, the map
\begin{align*}
\mathcal{P} \colon \wt{\Omega}_{i+r}^{\mathrm{Spin}^c}( K(\Z, r) ) \to H_i(B\mathrm{Spin}^c) 
\end{align*}
is an isomorphism.
\end{lemma}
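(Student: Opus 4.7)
The strategy is to reinterpret $\mathcal{P}$ as an induced map on stable homotopy of Thom spectra, and to exploit the high connectivity of the fundamental-class map $\Sigma^{\infty} K(\Z, r) \to \Sigma^r H\Z$ when $r$ is large.

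First, via the Pontryagin--Thom construction, identify
\[
\wt{\Omega}_{r+i}^{\mathrm{Spin}^c}(K(\Z, r)) \cong \pi_{r+i}(M\mathrm{Spin}^c \wedge K(\Z, r)),
\]
where $M\mathrm{Spin}^c$ denotes the spin$^c$ Thom spectrum. By Brown representability, the class $l_r \in H^r(K(\Z,r);\Z)$ corresponds to a map of spectra $\iota \colon \Sigma^{\infty} K(\Z, r) \to \Sigma^r H\Z$ that is the identity on $H_r(\cdot;\Z)$. Smashing with $M\mathrm{Spin}^c$ gives
\[
\mathrm{id} \wedge \iota \colon M\mathrm{Spin}^c \wedge K(\Z, r) \to \Sigma^r(M\mathrm{Spin}^c \wedge H\Z),
\]
and the induced map on $\pi_{r+i}$, composed with the integral Thom isomorphism $\pi_i(M\mathrm{Spin}^c \wedge H\Z) = H_i(M\mathrm{Spin}^c;\Z) \cong H_i(B\mathrm{Spin}^c;\Z)$, recovers $\mathcal{P}$; this is a direct naturality check on Pontryagin--Thom collapse data, using that spin$^c$ bundles are $\Z$-oriented so the Thom isomorphism is valid integrally.

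Second, let $C$ be the cofiber of $\iota$. Using Serre's description of $H^*(K(\Z, r); \Z/p)$ together with Bockstein analysis, the groups $H_k(K(\Z,r);\Z)$ agree with $H_{k-r}(H\Z;\Z)$ and the map $\iota$ induces isomorphisms on them throughout the range $r \le k \le 2r - 2$; consequently $C$ is $(2r-2)$-connected, i.e.\ $\wt{H}_k(C;\Z)=0$ for $k\le 2r-2$.

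Finally, for $r$ large relative to $i$ (concretely $r \ge i + 3$, so that $r + i + 1 \le 2r - 2$), every $E^2$-term of the Atiyah--Hirzebruch spectral sequence $\wt{H}_p(C;\Omega_q^{\mathrm{Spin}^c}) \Rightarrow \pi_{p+q}(M\mathrm{Spin}^c \wedge C)$ vanishes in total degree $\le r + i + 1$, so $\pi_{r+i}(M\mathrm{Spin}^c \wedge C) = \pi_{r+i+1}(M\mathrm{Spin}^c \wedge C) = 0$. The long exact sequence of the cofibration $M\mathrm{Spin}^c \wedge K(\Z, r) \to \Sigma^r(M\mathrm{Spin}^c \wedge H\Z) \to M\mathrm{Spin}^c \wedge C$ then forces $\mathcal{P}$ to be an isomorphism. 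The main technical point, and the step I expect to be the hardest, is the precise connectivity estimate for $C$; once that is established, the conclusion follows by a routine spectral-sequence argument.
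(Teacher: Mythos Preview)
Your proof is correct and follows essentially the same line as the paper's: both identify $\mathcal{P}$, via Pontryagin--Thom, with the map on $\pi_{r+i}$ induced by smashing $M\mathrm{Spin}^c$ with the comparison $\Sigma^{\infty}K(\Z,r)\to \Sigma^{r}H\Z$, and both use that this comparison is an equivalence in the relevant range of degrees. The paper records this as a chain of colimit identifications (the spectrum $\{K(\Z,r)\}_r$ computes integral homology of $M\mathrm{Spin}^c$, followed by the Thom isomorphism), whereas you make the stabilization explicit by bounding the connectivity of the cofiber and invoking the Atiyah--Hirzebruch spectral sequence; the underlying argument is the same, with your version supplying the quantitative bound the paper leaves implicit.
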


\begin{proof}
This follows from the sequence of isomorphisms:
\begin{align*}
\varinjlim_{r} \wt{\Omega}_{r+i}^{\mathrm{Spin}^{c}}(K(\Z, r)) & \cong  \varinjlim_{s, r} \pi_{r+8s+i}(M\mathrm{Spin}^{c}(8s) \wedge K(\Z, r)) \\
& \cong  \varinjlim_{s} \wt{H}_{8s+i}(M\mathrm{Spin}^{c}(8s)) \\
& \cong \varinjlim_{s} H_{i}(B\mathrm{Spin}^{c}(8s)) \\
& = H_{i}(B\mathrm{Spin}^{c}),
\end{align*}
where $M\mathrm{Spin}^{c}(8s)$ is the Thom space of the classifying bundle over $B\mathrm{Spin}^{c}(8s)$.
The definitions of the isomorphisms involved verify the claim. 
\end{proof}

For any $CW$-complex $X$ and $Y$, denote by $\Sigma X$ the suspension of $X$, and by $\Sigma f \colon \Sigma X \to \Sigma Y$ the suspension of a map $f \colon X \to Y$. 
For any coefficient group $G$, we denote the suspension isomorphisms in cohomology and bordism by
\begin{align*}
& \sigma \colon H^{\ast}(X; G) \rightarrow H^{\ast +1}(\Sigma X; G), \notag \\
& \sigma \colon \Omega_{\ast}^{\mathrm{Spin}^c}(X) \to \Omega_{\ast}^{\mathrm{Spin}^c}(\Sigma X).
\end{align*}
The use of the same symbol $\sigma$ for these isomorphisms should not cause confusion.
We also recall the Freudenthal suspension theorem (see \cite[Corollary 4.24]{hat-bat}):
\begin{lemma}[Freudenthal suspension theorem]
\label{lem:freu}
Suppose that $X$ is an $(n-1)$-connected $CW$ complex.
Then the suspension map $\pi_{i}(X) \to \pi_{i+1}(\Sigma X)$ is an isomorphism for $i < 2n-1$ and a surjection for $i = 2n-1$. 
\end{lemma}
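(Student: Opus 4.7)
The plan is to realize $\Sigma X$ as the homotopy pushout $C_+ X \cup_X C_- X$ of two contractible cones on $X$ glued along their common base, and then to deduce the suspension range from the Blakers--Massey homotopy excision theorem applied to this pushout. This is the route most commonly adopted (and is in spirit what Hatcher does).

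First I would check the connectivity of each pair $(C_\pm X, X)$. Since $C_\pm X$ is contractible, the long exact sequence of the pair gives $\pi_j(C_\pm X, X) \cong \pi_{j-1}(X)$, which vanishes for $j \le n$ by the assumed $(n-1)$-connectivity of $X$. Hence both $(C_+ X, X)$ and $(C_- X, X)$ are $n$-connected.

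Next I would invoke the Blakers--Massey theorem for this pushout: the inclusion-induced homomorphism
\[ \pi_i(C_+ X, X) \longrightarrow \pi_i(\Sigma X, C_- X) \]
is an isomorphism for $i < 2n$ and a surjection for $i = 2n$. Using the long exact sequences of the pairs together with the contractibility of the cones, one identifies the source with $\pi_{i-1}(X)$ and the target with $\pi_i(\Sigma X)$, and a straightforward check (tracking the boundary maps) shows the resulting composite is precisely the suspension homomorphism. Reindexing $i \mapsto i+1$ then yields the claimed ranges: isomorphism for $i < 2n-1$ and surjection for $i = 2n-1$.

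The main obstacle, of course, is the Blakers--Massey theorem itself, whose proof requires a nontrivial cellular/transversality argument deforming a representative $(D^i, S^{i-1}) \to (\Sigma X, C_- X)$ into one landing in $(C_+ X, X)$. In the context of this paper I would not reprove Blakers--Massey but simply appeal to it as a standard tool, exactly as the author implicitly does by citing Hatcher's Corollary~4.24.
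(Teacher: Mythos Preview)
Your proposal is correct and follows the standard Blakers--Massey route to Freudenthal, which is exactly the argument in Hatcher's Corollary~4.24. Note, however, that the paper does not prove this lemma at all: it merely states it and cites Hatcher, treating it as a well-known input rather than something to be established here.
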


Now, for large $r$, let us consider the following two cofibrations.
\begin{align}
\Sigma^{r-4n}K(\Z, 4n) &\xrightarrow{\psi} K(\Z,r) \xrightarrow{\pi_{\psi}} C_{\psi} ,\label{cof:psi}\\
\Sigma^{r-4n}K(\Q/\Z, 4n-1) &\xrightarrow{\bar{\psi}} K(\Z,r) \xrightarrow{\pi_{\bar{\psi}}} C_{\bar{\psi}}, \label{cof:psib}
\end{align}
where $\psi \colon \Sigma^{r-4n}K(\Z, 4n) \rightarrow K(\Z, r)$ is the map satisfying
$$\psi^{\ast} (l_{r})= \sigma^{r-4n} (l_{4n}),$$
and $\bar{\psi} = \psi \circ \Sigma^{r-4n} \bar{\beta}$ is the composition.
Here, $\sigma^{k}$ denotes the $k$-fold composition of $\sigma$.
By construction, there exists a map 
$$h \colon C_{\bar{\psi}} \rightarrow C_{\psi}$$
such that the cofibrations \eqref{cof:psi} and \eqref{cof:psib} fit into the commutative diagram:
\begin{equation}\label{eq:cof}
\begin{split}
\xymatrix{
\Sigma^{r-4n}K(\Q/\Z, 4n-1) \ar[r]^-{\bar{\psi}} \ar[d]_{\Sigma^{r-4n} \bar{\beta}} & K(\Z, r) \ar@{=}[d] \ar[r]^-{\pi_{\bar{\psi}}}  & C_{\bar{\psi}}  \ar[d]_{ h } \\
\Sigma^{r-4n}K(\Z, 4n) \ar[r]^-{\psi}  & K(\Z, r) \ar[r]^-{\pi_{\psi}}  & C_{\psi} .
}
\end{split}
\end{equation}
Define a homomorphism $\varphi \colon \wt{\Omega}^{\mathrm{Spin}^{c}}_{8n+2}(K(\Z,4n)) \rightarrow \Z/2$ by
\begin{equation*}
\varphi([N, f])=\langle f^{\ast}(l_{4n})\cdot \Sq^{2} f^{\ast}(l_{4n}), ~[N] \rangle.
\end{equation*}
From the commutative diagram \eqref{eq:cof}, Lemma \ref{lem:oh}, and the suspension isomorphism, we obtain the following commutative diagram with exact horizontal sequences:
\begin{equation*}
\begin{split}
\xymatrix{
\cdots \ar[r] & \wt{\Omega}_{r+4n+3}^{\mathrm{Spin}^{c}}(C_{\bar{\psi}}) \ar[r]^-{\partial} \ar[d]_{h_{\ast}}  & \wt{\Omega}_{8n+2}^{\mathrm{Spin}^{c}}(K(\Q/\Z, 4n-1)) \ar[r]^-{\bar{\psi}_{\ast}} \ar[d]_{\bar{\beta}_{\ast}} & H_{4n+2}(B\mathrm{Spin}^{c}) \ar@{=}[d] \ar[r] & \cdots\\
\cdots \ar[r] & \wt{\Omega}_{r+4n+3}^{\mathrm{Spin}^{c}}(C_{\psi}) \ar[r]^-{\partial}  & \wt{\Omega}_{8n+2}^{\mathrm{Spin}^{c}}(K(\Z, 4n)) \ar[r]^-{\psi_{\ast}}  \ar[d]_-{\varphi}& H_{4n+2}(B\mathrm{Spin}^{c}) \ar[r] & \cdots. \\
 & & \Z/2 && 
}
\end{split}
\end{equation*}
Here, the homomorphism $\psi_{\ast}$ denotes the composition $\mathcal{P} \circ \psi_{\ast} \circ \sigma^{r-4n}\colon$
\[ \wt{\Omega}_{8n+2}^{\mathrm{Spin}^{c}}(K(\Z, 4n)) \to \wt{\Omega}_{r+4n+2}^{\mathrm{Spin}^{c}}( \Sigma^{r-4n} K(\Z, 4n)) \to \wt{\Omega}_{r+4n+2}^{\mathrm{Spin}^{c}}( K(\Z, r) ) \to H_{4n+2}(B\mathrm{Spin}^{c}). \] 
By Lemma \ref{lem:oh}, $\psi_{\ast}$ is given explicitly by 
\begin{equation}\label{eq:psi}
\psi_{\ast}([N,f])=\tau_{N\ast}([N]\cap f^{\ast} (l_{4n})),
\end{equation}
for any bordism class $[N, f]\in  \wt{\Omega}^{\mathrm{Spin}^{c}}_{8n+2}(K(\Z, 4n))$ represented by $f\colon N \rightarrow K(\Z, 4n)$.

\begin{lemma}\label{lem:theta}
The composition $\varphi \circ \bar{\beta}_{\ast} \circ \partial = 0$.
\end{lemma}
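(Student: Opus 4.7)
The plan is to combine the commutative square \eqref{eq:cof} with the octahedral axiom applied to the factorization $\bar{\psi} = \psi \circ \Sigma^{r-4n}\bar{\beta}$. The octahedron produces an auxiliary cofibration
\[
\Sigma^{r-4n}C_{\bar{\beta}} \xrightarrow{i} C_{\bar{\psi}} \xrightarrow{h} C_{\psi},
\]
and commutativity of \eqref{eq:cof} gives the key identity $\bar{\beta}_{\ast}\circ \partial = \partial'\circ h_{\ast}$, so it suffices to show that $\varphi \circ \partial' \circ h_{\ast} = 0$. Moreover, the octahedral axiom identifies the composition $\partial \circ i$ with the suspension of the connecting map of the cofibration $K(\Q/\Z, 4n-1)\xrightarrow{\bar{\beta}} K(\Z,4n)\to C_{\bar{\beta}}$, whence the composition $\bar{\beta}_{\ast}\circ \partial \circ i_{\ast}$ vanishes by exactness of the bordism long exact sequence of that cofibration. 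Thus $\varphi\circ\bar{\beta}_{\ast}\circ\partial$ descends from $\wt{\Omega}^{\mathrm{Spin}^c}_{r+4n+3}(C_{\bar{\psi}})$ to a homomorphism on the quotient by $\mathrm{Im}(i_{\ast})$.

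Next I would invoke Lemma \ref{lem:qz}-style input: $H_{\ast}(C_{\bar{\beta}};\Z)\cong H_{\ast}(K(\Z,4n);\Q)$ is a graded rational vector space, and since $\Omega^{\mathrm{Spin}^c}_q = 0$ for $q$ odd, the Atiyah--Hirzebruch spectral sequence yields $\wt{\Omega}^{\mathrm{Spin}^c}_{8n+3}(C_{\bar{\beta}}) = 0$ (no surviving $E^2$ terms in the total degree $8n+3$). Via the suspension isomorphism this gives $\wt{\Omega}^{\mathrm{Spin}^c}_{r+4n+3}(\Sigma^{r-4n}C_{\bar{\beta}})=0$, so $i_{\ast}=0$ and $h_{\ast}$ is \emph{injective}. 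Combined with the octahedral long exact sequence, $\mathrm{coker}(h_{\ast})$ injects into $\wt{\Omega}^{\mathrm{Spin}^c}_{8n+2}(C_{\bar{\beta}})$, which by the same AHSS argument is itself a $\Q$-vector space; in particular the elements of $\mathrm{Im}(h_{\ast})$ are precisely those bordism classes $[W,g']$ in $\wt{\Omega}^{\mathrm{Spin}^c}_{r+4n+3}(C_{\psi})$ whose image $\partial'[W,g'] = [N, f_t]$ satisfies $t\in TH^{4n}(N)$.

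It remains to show that for any closed $(8n{+}2)$-dimensional spin$^c$ manifold $N$ and torsion class $t\in TH^{4n}(N)$ satisfying the null-homology condition $\tau_{N\ast}([N]\cap t)=0$ (automatic from $[N, f_t] \in \mathrm{Im}(\partial')$, by the exactness used in the diagram following Lemma \ref{lem:oh}), we have $\langle \rho_{2}(t)\cdot \Sq^{2}\rho_{2}(t), [N]\rangle = 0$. The principal obstacle lies here: while the AHSS forces $\mathrm{coker}(h_{\ast})$ to be torsion-free, it is not a priori divisible, so $\mathrm{Hom}(\mathrm{coker}(h_{\ast}),\Z/2)$ may be nontrivial and a pure diagram chase on \eqref{eq:cof} does not conclude. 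Bridging this gap requires an independent geometric input, which I would obtain by writing $t = \beta^{\Q/\Z}(z)$ for $z\in H^{4n-1}(N;\Q/\Z)$ and splitting into 2-primary and odd-primary parts: the odd-primary part contributes zero after $\rho_{2}$, while for the 2-primary part one writes $\rho_{2}(t) = \Sq^{1}u$ for suitable $u\in H^{4n-1}(N;\Z/2)$ (obtained from the 2-primary part of $z$), and repeatedly uses Cartan plus Wu (noting $v_{1}(N)=v_{3}(N)=0$ on spin$^c$ manifolds, since $W_3=\beta w_2=0$) together with the vanishing $\langle \alpha \cdot Sq^1 u, [N]\rangle = 0$ for every characteristic class $\alpha$ of degree $4n+2$ (which follows from reducing the null-homology hypothesis modulo $2$) to rewrite the characteristic number as a sum of terms each of which vanishes by a Wu/characteristic-number argument.
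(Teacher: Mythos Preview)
Your octahedral-axiom reduction is correct and interesting: you do get a cofibration $\Sigma^{r-4n}C_{\bar\beta}\to C_{\bar\psi}\to C_\psi$, the identity $\bar\beta_*\circ\partial=\partial'\circ h_*$, and the AHSS argument showing $\wt\Omega^{\mathrm{Spin}^c}_{8n+3}(C_{\bar\beta})=0$ and $\wt\Omega^{\mathrm{Spin}^c}_{8n+2}(C_{\bar\beta})$ a $\Q$-vector space. But the final paragraph contains a genuine gap. The claim that for $2$-primary torsion $t$ one can write $\rho_2(t)=\Sq^1u$ is false in general: if $t$ generates a $\Z/4$ summand of $H^{4n}(N;\Z)$ then $\rho_2(t)\neq 0$, yet the mod~$2$ reduction $u$ of any preimage $z\in H^{4n-1}(N;\Z/4)$ satisfies $\Sq^1u=\rho_2(\beta^{\Z/2}u)$, and $\beta^{\Z/2}u$ has order at most $2$, so $\rho_2(\beta^{\Z/2}u)=0$ whenever the $\Z/4$ is ``pure'' (e.g.\ $H^{4n-1}(N;\Z)=0$). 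Thus your proposed Wu/Cartan manipulation never gets off the ground for higher $2$-torsion. Worse, the statement you have reduced to---that $\langle\rho_2(t)\cdot\Sq^2\rho_2(t),[N]\rangle=0$ whenever $t$ is torsion and $\tau_{N*}([N]\cap t)=0$---is exactly equivalent to the existence of $\Theta$ (Theorem~\ref{thm:theta}), since that theorem gives $\langle\rho_2(t)\cdot\Sq^2\rho_2(t),[N]\rangle=\langle\Theta,\tau_{N*}([N]\cap\rho_2(t))\rangle$, which vanishes under your hypothesis. So your reduction, while valid, has landed you back at the theorem the lemma is meant to prove.

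The paper takes a completely different route that avoids this circularity. Rather than passing to a closed boundary manifold, it works directly with the relative pair $(W,\partial W)$ representing a class in $\wt\Omega^{\mathrm{Spin}^c}_{r+4n+3}(C_{\bar\psi})$ and pushes the computation into $H^*(C_\psi;\Z/2)$ and $H^*(C_{\bar\psi};\Z)$. The two key inputs are Lemma~\ref{lem:sq5} (from Landweber--Stong), which identifies $\delta\sigma^{r-4n}(l_{4n}\cdot\Sq^2 l_{4n})=\Sq^2\overline{\Sq^{4n+1}l_r}$ in $H^*(C_\psi;\Z/2)$, and Lemma~\ref{lem:tor}, which shows that $h^*(\overline{\Sq^{4n+1}l_r})=\rho_2(t_{\bar\psi})$ for a \emph{torsion integral} class $t_{\bar\psi}\in H^{r+4n+1}(C_{\bar\psi};\Z)$. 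Then $\varphi\circ\bar\beta_*\circ\partial$ evaluates to $\langle\Sq^2 f^*\rho_2(t_{\bar\psi}),[W,\partial W]\rangle=\langle w_2(W)\cdot f^*\rho_2(t_{\bar\psi}),[W,\partial W]\rangle$; lifting $w_2(W)=\rho_2(c)$ by the spin$^c$ hypothesis, this becomes $\langle\rho_2(c\cdot f^*t_{\bar\psi}),[W,\partial W]\rangle$, which vanishes because $c\cdot f^*t_{\bar\psi}$ is torsion in $H^{r+4n+3}(W,\partial W;\Z)\cong\Z$. The point is that working relatively replaces your null-homology constraint with the automatic torsion-freeness of the top relative cohomology, and Lemmas~\ref{lem:sq5}--\ref{lem:tor} supply the integral torsion lift that your $\Sq^1u$ ansatz was trying (and failing) to produce.
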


\begin{proof}[Proof of Theorem \ref{thm:theta}]
Since $\Q/\Z$ is a torsion group, $H_{\ast} ( K(\Q/\Z, 4n-1) )$ consists of torsion groups (cf. \cite[p. 77, Lemma 8.8]{gm-b-rh}). 
Consequently, the Atiyah-Hirzebruch spectral sequence implies that $\wt{\Omega}_{8n+2}^{\mathrm{Spin}^{c}}(K(\Q/\Z, 4n-1))$ is also a torsion group.
Therefore, the image of $\bar{\psi}_{\ast}$ must lie in the torsion subgroup of $H_{4n+2}(B\mathrm{Spin}^{c})$.  
Since all torsion in $H_{4n+2}(B\mathrm{Spin}^{c})$ has order $2$ (cf. \cite[p. 317, Corollary]{st68b}), Lemma \ref{lem:theta} implies the existence of a homomorphism $\Theta \colon H_{4n+2}(B\mathrm{Spin}^{c}; \Z) \rightarrow \Z/2$, or equivalently, a cohomology class 
$$\Theta \in \mathrm{Hom}(H_{4n+2}(B\mathrm{Spin}^{c}; \Z), \Z/2) \subset H^{4n+2}(B\mathrm{Spin}^{c}; \Z/2)$$
such that 
\begin{equation}\label{eq:theta}
\Theta \circ \bar{\psi}_{\ast} = \Theta \circ \psi_{\ast} \circ \bar{\beta}_{\ast} = \varphi \circ \bar{\beta}_{\ast}.
\end{equation}


Now, for any $8n+2$-dimensional spin$^c$ manifold $M$ and any torsion class $t \in TH^{4n}(M)$, the exactness of the sequence \eqref{boc:q/z} implies the existence of an element $z \in H^{4n-1}(M; \Q/\Z)$ such that 
$\beta^{\Q/\Z} (z) = t$.
Therefore, by Identity \eqref{eq:betaf}, we have 
\[ f_{t} = \bar{\beta} \circ f_{z}, \]
and hence $ [M, f_{t}] =  \bar{\beta}_{\ast} ([M, f_{z}]) $.
On the one hand, applying Identity \eqref{eq:theta} yields:
\begin{align*}
\Theta \circ \psi_{\ast} ([M, f_{t}]) & =  \Theta \circ \psi_{\ast} \circ \bar{\beta}_{\ast} ([M, f_{z}]) \\
& = \varphi \circ \bar{\beta}_{\ast} ([M, f_{z}]) \\
& = \varphi ([M, f_{t}]) \\
& = \langle \rho_{2} (t)  \cdot \Sq^{2} \rho_{2} (t) , [M] \rangle.
\end{align*}
On the other hand, by the definition of $\Theta$ and Identity \eqref{eq:psi}, we have
\[ \Theta \circ \psi_{\ast} ([M, f_{t}]) = \Theta ( \tau_{M \ast} ([M] \cap t) ) = \langle \tau_{M}^{\ast}(\Theta), [M]\cap \rho_{2} (t) \rangle = \langle \rho_{2} (t) \cdot \tau_{M}^{\ast} ( \Theta ), [M] \rangle. \]
Comparing these two expressions completes the proof.
\end{proof}

The remainder of this section is devoted to the proof of Lemma \ref{lem:theta}.

Note that $r$ is sufficiently large.
Consider the commutative diagram \eqref{eq:cof} of the cofibrations \eqref{cof:psi} and \eqref{cof:psib}, which induces an exact ladder of cohomology groups for any coefficient group $G$:
\begin{equation}\label{eq:hpsi}
\begin{split}
\xymatrix{
\cdots \ar[r]^-{\delta} & H^{\ast}(C_{\psi}; G) \ar[r]^-{\pi_{\psi}^{\ast}} \ar[d]^{h^{\ast}} & H^{\ast}(K(\Z, r); G) \ar[r]^-{\psi^{\ast}} \ar@{=}[d] & H^{\ast}(\Sigma^{r-4n}K(\Z, 4n); G) \ar[r]^-{\delta} \ar[d]_{\Sigma^{r-4n} \bar{\beta}^{\ast}} & \cdots \\
\cdots \ar[r]^-{\delta} & H^{\ast}(C_{\bar{\psi}}; G) \ar[r]^-{\pi_{\bar{\psi}}^{\ast}} & H^{\ast}(K(\Z, r); G) \ar[r]^-{\bar{\psi}^{\ast}} & H^{\ast}(\Sigma^{r-4n}K(\Q/\Z, 4n-1); G) \ar[r]^-{\delta} & \cdots
}
\end{split}
\end{equation}
(The top and bottom rows are the long exact sequences of $C_{\psi}$ and $C_{\bar{\psi}}$, respectively.)

By analyzing the behavior of the homomorphism $\psi^{\ast}$ with $G=\Z/2$ (cf. Landweber and Stong \cite[pp. 627-628]{ls87}), one finds that 
\begin{enumerate}
\item[i)] $H^{r+4n+1}(C_{\psi}; \Z/2)  \cong \Z/2$, generated by $\overline{\Sq^{4n+1}   l_{r}  }$,
\item[ii)] $H^{r+4n+3}(C_{\psi}; \Z/2)  \cong (\Z/2)^2$, generated by  $\overline{\Sq^{4n+3}   l_{r}  } $ and $\delta \circ \sigma^{r-4n} (  l_{4n}  \cdot \Sq^{2}  l_{4n} )$, 
where $\overline{x} \in H^{\ast}(C_{\psi}; \Z/2)$ denotes a class such that $\pi_{\psi}^{\ast}(\overline{x}) = x  \in H^{\ast}(K(\Z, r); \Z/2)$. 
For convenience, the generator of $H^{k}(K(\Z, k); \Z/2) \cong \Z/2$ is also denote by $l_k$.
\end{enumerate}
Landweber and Stong \cite[p. 628 Claim]{ls87} proved the following:
\begin{lemma}\label{lem:sq5}
The generators above satisfy
\begin{equation*}
\Sq^{2}  \overline{\Sq^{4n+1} l_{r}} =\delta \circ \sigma^{r-4n} ( l_{4n} \cdot \Sq^{2} l_{4n} ).
\end{equation*}
\end{lemma}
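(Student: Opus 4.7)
The plan is to pin down both coefficients in the expansion of $\Sq^2 \overline{\Sq^{4n+1} l_r}$ with respect to the two-dimensional basis of $H^{r+4n+3}(C_\psi;\Z/2)$ provided by item ii): writing
\[
\Sq^2\overline{\Sq^{4n+1}l_r} \;=\; a\,\overline{\Sq^{4n+3}l_r}+b\,\delta\circ\sigma^{r-4n}(l_{4n}\cdot\Sq^2 l_{4n})
\]
with $a,b\in\Z/2$, the task reduces to showing $a=0$ and $b=1$.

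The coefficient $a$ is read off by applying $\pi_\psi^*$ and using the naturality of $\Sq^2$ together with the exactness of the top row of the ladder \eqref{eq:hpsi}: one has $\pi_\psi^*(\Sq^2\overline{\Sq^{4n+1}l_r})=\Sq^2\Sq^{4n+1}l_r$, whereas $\pi_\psi^*\bigl(\delta\circ\sigma^{r-4n}(l_{4n}\cdot\Sq^2 l_{4n})\bigr)=0$. The Adem relation
\[
\Sq^2\Sq^{4n+1} \;=\; \binom{4n}{2}\Sq^{4n+3}+\Sq^{4n+2}\Sq^1,
\]
combined with $\binom{4n}{2}=2n(4n-1)\equiv 0\pmod{2}$ and $\Sq^1 l_r=0$ (since $l_r$ is the mod-$2$ reduction of the integral fundamental class), yields $\Sq^2\Sq^{4n+1}l_r=0$. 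As $\pi_\psi^*(\overline{\Sq^{4n+3}l_r})=\Sq^{4n+3}l_r$ is nonzero, this forces $a=0$.

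The nontrivial identification $b=1$ is the main obstacle. My plan is to use the Adem factorization $\Sq^{4n+1}=\Sq^1\Sq^{4n}$: since $\psi^*(\Sq^{4n}l_r)=\sigma^{r-4n}(\Sq^{4n}l_{4n})=\sigma^{r-4n}(l_{4n}^2)=0$ (because suspension kills cup products), the class $\Sq^{4n}l_r$ lifts to some $\overline{\Sq^{4n}l_r}\in H^{r+4n}(C_\psi;\Z/2)$; as $\Sq^1\overline{\Sq^{4n}l_r}$ is a nonzero lift of $\Sq^{4n+1}l_r$ and $H^{r+4n+1}(C_\psi;\Z/2)\cong\Z/2$ is generated by $\overline{\Sq^{4n+1}l_r}$ (by item i)), the two agree, so
\[
\Sq^2\overline{\Sq^{4n+1}l_r}\;=\;\Sq^2\Sq^1\overline{\Sq^{4n}l_r}.
\]
A cochain-level chase---tracing this through the coboundary defining $\delta$ and applying the Cartan formula together with the instability identity $\Sq^{4n}l_{4n}=l_{4n}^2$---then identifies the right-hand side with $\delta\circ\sigma^{r-4n}(l_{4n}\cdot\Sq^2 l_{4n})$. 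The hard part is precisely this cochain-level identification: one must produce an explicit representative of $\overline{\Sq^{4n}l_r}$ compatible with the cofiber structure and verify that the associated secondary operation picks out the cup product $l_{4n}\cdot\Sq^2 l_{4n}$ rather than any competing class in $H^{8n+2}(K(\Z,4n);\Z/2)$, exactly as carried out by Landweber and Stong \cite[p.~628]{ls87}.
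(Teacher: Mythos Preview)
Your determination of $a=0$ is correct. However, your argument for $b=1$ contains a genuine error: you claim $\psi^*(\Sq^{4n}l_r)=\sigma^{r-4n}(l_{4n}^2)=0$ ``because suspension kills cup products,'' but this confuses $\sigma^{r-4n}(l_{4n}\cdot l_{4n})$ with $\sigma^{r-4n}(l_{4n})\cdot\sigma^{r-4n}(l_{4n})$. The suspension map $\sigma^{r-4n}$ is an isomorphism of cohomology \emph{groups}, so the former is nonzero (it is the image of the nonzero class $l_{4n}^2=\Sq^{4n}l_{4n}\in H^{8n}(K(\Z,4n);\Z/2)$); it is only the cup product \emph{computed in} $\widetilde H^*(\Sigma^{r-4n}K(\Z,4n))$ that vanishes. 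Consequently $\Sq^{4n}l_r$ does \emph{not} lie in $\ker\psi^*$. In fact, as noted in the proof of Lemma~\ref{lem:tor}, $C_\psi$ is $(r{+}4n)$-connected, so $H^{r+4n}(C_\psi;\Z/2)=0$: there is no nonzero lift $\overline{\Sq^{4n}l_r}$ at all, and the factorization $\overline{\Sq^{4n+1}l_r}=\Sq^1\,\overline{\Sq^{4n}l_r}$ you propose cannot be made. The entire strategy for $b=1$ therefore collapses before any cochain-level chase begins.

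As for comparison with the paper: the paper does not give its own proof of this lemma but simply cites Landweber and Stong \cite[p.~628, Claim]{ls87}. So your closing deferral to that reference is, in effect, all that the paper itself offers. The actual computation in \cite{ls87} does not proceed via the lift you attempt; whatever ``cochain-level chase'' one carries out must be grounded differently, and you would need to consult their argument directly rather than reconstruct it along the lines you have sketched.
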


Furthermore, by analyzing the cohomology groups of $C_{\psi}$ and $C_{\bar{\psi}}$, one obtains:
\begin{lemma}\label{lem:tor}
The cohomology group $H^{r+4n+1}(C_{\bar{\psi}})$ is a torsion group, and there exists a torsion class $t_{\bar{\psi}} \in H^{r+4n+1}(C_{\bar{\psi}})$ such that 
\[ \rho_{2} (t_{\bar{\psi}}) = h^{\ast} \left( \overline{ \Sq^{4n+1} l_{r} } \right). \]
\end{lemma}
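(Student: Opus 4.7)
The plan is to treat the two assertions separately. For the torsion statement about $H^{r+4n+1}(C_{\bar\psi}; \Z)$, I would feed the cofibration \eqref{cof:psib} into rational cohomology. Since $\Q/\Z$ is a torsion abelian group, $H^*(K(\Q/\Z, 4n-1); \Q) = 0$ in positive degrees; and for $r$ sufficiently large the rational cohomology $H^*(K(\Z, r); \Q)$, which is a polynomial or exterior algebra on $l_r$, vanishes in degree $r+4n+1$. The resulting long exact sequence then forces $H^{r+4n+1}(C_{\bar\psi}; \Q) = 0$, so the universal coefficient theorem makes $H^{r+4n+1}(C_{\bar\psi}; \Z)$ purely torsion.

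For the existence of $t_{\bar\psi}$, my strategy is to first build an integral lift $\hat t_\psi \in H^{r+4n+1}(C_\psi; \Z)$ of $\overline{\Sq^{4n+1} l_r}$, and then set $t_{\bar\psi} := h^*(\hat t_\psi)$; naturality of $\rho_2$ does the rest. The key input is the Adem relation $\Sq^{4n+1} = \Sq^1 \Sq^{4n}$ (coming from $\binom{4n-1}{1}\equiv 1 \pmod 2$), which rewrites
\[ \Sq^{4n+1} l_r = \Sq^1 \Sq^{4n} l_r = \rho_2\,\beta^{\Z/2}(\Sq^{4n} l_r), \]
so $\Sq^{4n+1} l_r$ already lifts to an integral class on $K(\Z, r)$ and, in particular, $\beta^{\Z/2}(\Sq^{4n+1} l_r) = 0$ since $\beta^{\Z/2} \circ \rho_2 = 0$.

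To lift $\overline{\Sq^{4n+1} l_r}$ from $\Z/2$ to $\Z$ coefficients on $C_\psi$, I would show $\beta^{\Z/2}(\overline{\Sq^{4n+1} l_r}) = 0$. By naturality, $\pi_\psi^*$ sends this class to $\beta^{\Z/2}(\Sq^{4n+1} l_r) = 0$, so it lies in $\im \delta$, where $\delta$ issues from $H^{r+4n+1}(\Sigma^{r-4n} K(\Z, 4n); \Z) \cong H^{4n+1}(K(\Z, 4n); \Z)$. This latter group vanishes: for every prime $p$, the only admissible Steenrod monomial of degree one (namely $\Sq^1$ or the $p$-primary Bockstein) annihilates the integral fundamental class of $K(\Z, 4n)$, so $H^{4n+1}(K(\Z, 4n); \Z/p) = 0$ for every $p$; together with the vanishing of $H^{4n+1}(K(\Z, 4n); \Q)$, this finitely generated abelian group must be zero. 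Hence $\beta^{\Z/2}(\overline{\Sq^{4n+1} l_r}) = 0$, the Bockstein sequence of $C_\psi$ produces an integral class $\hat t_\psi$ with $\rho_2(\hat t_\psi) = \overline{\Sq^{4n+1} l_r}$, and $t_{\bar\psi} := h^*(\hat t_\psi)$ has the required mod~$2$ reduction; its torsion nature comes for free from the first part.

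The main obstacle I expect is cleanly establishing the vanishing $H^{4n+1}(K(\Z, 4n); \Z) = 0$; the bare-hands route requires invoking Serre's structure theorem for $H^*(K(\Z, n); \Z/p)$, and any more elementary argument would streamline the presentation.
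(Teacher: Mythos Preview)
Your proposal is correct. Part~1 (the torsion statement) is essentially identical to the paper's argument: both feed the cofibration \eqref{cof:psib} into rational cohomology and use that $K(\Q/\Z,4n-1)$ has vanishing rational cohomology together with $H^{r+4n+1}(K(\Z,r);\Q)=0$.

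For Part~2 your route differs from the paper's. The paper argues that $C_{\psi}$ is $(r{+}4n)$-connected (via the Freudenthal theorem), so by the universal coefficient theorem $H^{r+4n+1}(C_{\psi};\Z)$ is torsion free; a short rational computation then gives $H^{r+4n+1}(C_{\psi};\Z)\cong\Z$, and surjectivity of $\rho_{2}$ onto $H^{r+4n+1}(C_{\psi};\Z/2)\cong\Z/2$ is automatic. You instead check $\beta^{\Z/2}\bigl(\overline{\Sq^{4n+1}l_{r}}\bigr)=0$ directly: the Adem relation $\Sq^{4n+1}=\Sq^{1}\Sq^{4n}$ kills it after applying $\pi_{\psi}^{\ast}$, and the remaining ambiguity lies in the image of $\delta$ from $H^{4n+1}(K(\Z,4n);\Z)$, which you show vanishes. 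The paper's approach is more conceptual and avoids any Steenrod-algebra input, at the cost of invoking the connectivity of $C_{\psi}$; your approach is more explicit and bypasses that connectivity statement entirely. Regarding your anticipated obstacle, the vanishing $H^{4n+1}(K(\Z,4n);\Z)=0$ is in fact elementary: since $K(\Z,4n)$ is $(4n{-}1)$-connected with $\pi_{4n+1}=0$, the Hurewicz theorem gives $H_{4n+1}(K(\Z,4n);\Z)=0$, and then the universal coefficient theorem (with $H_{4n}=\Z$ free) yields $H^{4n+1}=0$ without any appeal to Serre's description of $H^{\ast}(K(\Z,n);\Z/p)$.
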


\begin{proof}
Since $r$ is large, we note the following facts:
\begin{enumerate}
\item[$(1)$] $\Q/Z$ is a torsion group implies that 
$$H^{r+4n}(\Sigma^{r-4n}K(\Q/\Z, 4n-1); \Q) \cong H^{8n}( K(\Q/\Z, 4n-1); \Q ) = 0$$
by \cite[p. 77, Lemma 8.8]{gm-b-rh}.
\item [$(2)$] $H^{r+4n}(K(\Z, r); \Q) = H^{r+4n+1}(K(\Z, r); \Q) = 0$ by \cite[p. 550, Proposition 5.21]{hat-bss}.
\item [$(3)$] $H^{r+4n}(\Sigma^{r-4n}K(\Z,4n); \Q) \cong H^{8n}(K(\Z, 4n); \Q) \cong \Q$ by \cite[p. 550, Proposition 5.21]{hat-bss}.
\end{enumerate}

Facts $(1)$ and $(2)$, combined with the bottom row of the exact ladder \eqref{eq:hpsi} for $G = \Q$, imply that $H^{r+4n+1}(C_{\bar{\psi}}; \Q) =0$.
Hence, $H^{r+4n+1}(C_{\bar{\psi}})$ is a torsion group.

To prove the existence of $t_{\bar{\psi}}$, consider the cohomology group $H^{r+4n+1}(C_{\psi})$.
By construction and the Freudenthal suspension theorem (Lemma \ref{lem:freu}), $C_{\psi}$ is $(r{+}4n)$-connected. 
The universal coefficient theorem then implies that $H^{r+4n+1}(C_{\psi})$ is torsion free. 
Moreover,  combining Facts $(2)$ and $(3)$ with the top row of the ladder \eqref{eq:hpsi} for $G=\Q$, we find $H^{r+4n+1}(C_{\psi}; \Q) \cong \Q$.
Therefore, 
\[ H^{r+4n+1}(C_{\psi}) \cong \Z. \]
The Bockstein sequence \eqref{eq:bseq} now implies the existence of a class $x \in H^{r+4n+1}(C_{\psi})$ such that $\rho_{2} (x) = \overline{ \Sq^{4n+1} l_{r} }$. 
Set 
$$t_{\bar{\psi}} = h^{\ast} (x) \in H^{r+4n+1}(C_{\bar{\psi}}).$$
Then $t_{\bar{\psi}}$ is a torsion class and 
\begin{equation*}
\rho_{2} (t_{\bar{\psi}}) = \rho_{2} ( h^{\ast} (x) ) = h^{\ast} ( \rho_{2} (x) ) = h^{\ast}  \left( \overline{ \Sq^{4n+1} l_{r} } \right),
\end{equation*}
which completes the proof.
\end{proof}

\begin{proof}[Proof of Lemma \ref{lem:theta}]
Consider any bordism class 
$$[(W,\partial W), (f, g)] \in \wt{\Omega}_{r+4n+3}^{\mathrm{Spin}^{c}}(C_{\bar{\psi}}) \cong \Omega_{r+4n+3}^{\mathrm{Spin}^{c}}(K(\Z, r), \Sigma^{r-4n}K(\Q/\Z, 4n-1))$$
represented by maps $f,~g$ fitting into the commutative diagram:
\begin{equation*}
\begin{split}
\xymatrix{
\partial W \ar[r]^-{g} \ar@{_{(}->}[d]^{} & \Sigma^{r-4n}K(\Q/\Z, 4n-1) \ar[d]^{\bar{\psi}} \\
W \ar[r]^-{f} & K(\Z, r).
}
\end{split}
\end{equation*}
From the definition of $\varphi$ and Lemmas \ref{lem:sq5} and \ref{lem:tor}, we compute
\begin{align*}
\varphi \circ \bar{\beta}_{\ast} \circ \partial ([(W,\partial W), (f, g)]) & = \langle g^{\ast} \circ (\Sigma^{r-4n}\bar{\beta})^{\ast} \circ \sigma^{r-4n} (l_{4n} \cdot \Sq^{2} l_{4n}), [\partial W] \rangle \\
&= \langle \delta \circ g^{\ast}  \circ (\Sigma^{r-4n}\bar{\beta})^{\ast}  \circ \sigma^{r-4n}  ( l_{4n} \cdot \Sq^{2} l_{4n} ), [W, \partial W] \rangle \\
&= \langle f^{\ast} \circ h^{\ast} \circ \delta \circ \sigma^{r-4n} ( l_{4n} \cdot \Sq^{2} l_{4n}), [W,\partial W] \rangle \\
&= \langle f^{\ast} \circ h^{\ast} \circ \Sq^{2}  \overline{ \Sq^{4n+1} l_{r} } , [W,\partial W] \rangle \\
&= \langle f^{\ast} \circ \Sq^{2} \rho_{2} (t_{\bar{\psi}}) , [W, \partial W] \rangle.
\end{align*}
The Wu class $v_2(W)$ is defined as in \cite[equation (7.1)]{ker-rcc}.
Since $W$ is orientable, Wu's formula \eqref{eq:wu} together with \cite[Lemma (7.3)]{ker-rcc} implies that $v_2(W) = w_2(W)$.
Therefore, by the definition of Wu class, we have 
\begin{align*}
\varphi \circ \bar{\beta}_{\ast} \circ \partial ([(W,\partial W), (f, g)])
&= \langle f^{\ast} \circ \Sq^{2} \rho_{2} (t_{\bar{\psi}}) , [W, \partial W] \rangle \\
& = \langle  w_{2}(W) \cdot  f^{\ast} (\rho_{2} (t_{\bar{\psi}}) ), [W, \partial W] \rangle.
\end{align*}
Since $W$ is spin$^{c}$, there exists an element $c \in H^{2}(W)$ such that $\rho_{2} (c) = w_{2}(W)$. 
By Lemma \ref{lem:tor}, $t_{\bar{\psi}}$ is a torsion element. 
Therefore, $c \cdot f^{\ast} (t_{\bar{\psi}})$ is a torsion element in $H^{r+4n+3}(W, \partial W) \cong \Z$, hence must be zero. 
Consequently,
\begin{align*}
\varphi \circ \bar{\beta}_{\ast} \circ \partial ([(W,\partial W), (f, g)]) & = \langle  w_{2}(W)\cdot f^{\ast} ( \rho_{2} (t_{\bar{\psi}}) ), [W, \partial W] \rangle \\
&=\langle \rho_{2}(c \cdot f^{\ast} (t_{\bar{\psi}}) ), [W, \partial W] \rangle \\
&=0.
\end{align*}
This completes the proof.
\end{proof}


\section{Describing $\Theta$}
\label{s:prop}
This section establishes some elementary properties of the class $\Theta \in H^{4n+2}(B\mathrm{Spin}^c; \Z/2)$ whose existence is guaranteed by Theorem \ref{thm:theta}.
\begin{proposition}\label{prop:thetaz}
The class $\Theta$ is well-defined only modulo the subgroup $\rho_{2} (H^{4n+2}(B\mathrm{Spin}^{c}))$.
That is, it is uniquely determined as an element of the quotient group
\begin{equation*}
H^{4n+2}(B\mathrm{Spin}^{c};\Z/2)/\rho_{2} (H^{4n+2}(B\mathrm{Spin}^{c})).
\end{equation*}
\end{proposition}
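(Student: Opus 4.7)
The plan is to show both directions of the claim: (i) every class of the form $\Theta + \rho_2(y)$ with $y \in H^{4n+2}(B\mathrm{Spin}^c; \Z)$ still satisfies the identity of Theorem \ref{thm:theta}, and (ii) any two classes satisfying that identity differ by such a shift. Together, these say exactly that $\Theta$ is canonical as an element of the quotient group.

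For (i), I would argue by direct calculation. Given $y \in H^{4n+2}(B\mathrm{Spin}^c; \Z)$, any $(8n{+}2)$-dimensional spin$^c$ manifold $M$, and any torsion class $t \in TH^{4n}(M)$, naturality and multiplicativity of $\rho_2$ yield
\[
\langle \rho_2(t) \cdot \tau_M^*(\rho_2(y)), [M] \rangle = \langle \rho_2(t \cdot \tau_M^*(y)), [M] \rangle.
\]
Since $t$ is torsion, so is the cup product $t \cdot \tau_M^*(y) \in H^{8n+2}(M; \Z)$; but the top integral cohomology of a closed oriented manifold is free abelian, so $t \cdot \tau_M^*(y) = 0$. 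Thus the shifted pairing agrees with the original, and $\Theta + \rho_2(y)$ satisfies the identity whenever $\Theta$ does.

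For (ii), suppose $\Theta_1, \Theta_2 \in H^{4n+2}(B\mathrm{Spin}^c; \Z/2)$ both satisfy the identity, and set $\eta := \Theta_1 - \Theta_2$. Retracing the proof of Theorem \ref{thm:theta}, the vanishing of $\langle \rho_2(t) \cdot \tau_M^*(\eta), [M] \rangle$ for all $(M, t)$ is equivalent to $\eta \circ \bar{\psi}_* = 0$, that is, $\eta$ (viewed as a homomorphism $H_{4n+2}(B\mathrm{Spin}^c; \Z) \to \Z/2$) vanishes on $\Im(\bar{\psi}_*)$. By the Bockstein sequence \eqref{eq:bseq}, membership of $\eta$ in $\rho_2(H^{4n+2}(B\mathrm{Spin}^c; \Z))$ is equivalent to $\beta^{\Z/2}(\eta) = 0$, so the task reduces to deducing the latter from the former.

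The main obstacle is this translation. I would invoke Stong's result that all torsion in $H_*(B\mathrm{Spin}^c; \Z)$ has order $2$, together with a universal-coefficients analysis identifying the quotient $H^{4n+2}(B\mathrm{Spin}^c; \Z/2) / \rho_2(H^{4n+2}(B\mathrm{Spin}^c; \Z))$ with a group computable from $H_{4n+1}(B\mathrm{Spin}^c; \Z)$, and verify that $\Im(\bar{\psi}_*)$ covers enough of the $2$-torsion of $H_{4n+2}(B\mathrm{Spin}^c; \Z)$ to force $\beta^{\Z/2}(\eta) = 0$. The delicate step will be ensuring that the bordism image $\Im(\bar{\psi}_*)$ is pairing-dual to this quotient, so that no further ambiguity remains beyond $\rho_2(H^{4n+2}(B\mathrm{Spin}^c; \Z))$.
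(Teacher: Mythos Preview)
Your part (i) is exactly the paper's proof: the key observation is that $t \cdot \tau_M^*(y)$ is torsion in $H^{8n+2}(M;\Z) \cong \Z$, hence zero, so the extra term contributes nothing to the pairing.

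Your part (ii), however, goes beyond what the paper actually proves in this proposition. Despite the phrase ``uniquely determined'', the paper's proof establishes \emph{only} direction (i): any shift of $\Theta$ by a class in $\rho_2(H^{4n+2}(B\mathrm{Spin}^c;\Z))$ still satisfies the identity. The genuine uniqueness of $\Theta$ in the quotient is not argued abstractly here; it is obtained later in the paper by explicitly computing $Sq^1\Theta$ (Proposition~\ref{prop:theta0} reduces the problem to determining $Sq^1\Theta$, and Section~\ref{s:main} carries out that computation via bordism calculations).

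Your sketch for (ii) has a real gap precisely at the step you flag as ``delicate''. Knowing that $\eta$ vanishes on $\Im(\bar\psi_*)$ gives $\beta^{\Z/2}(\eta)=0$ only if $\Im(\bar\psi_*)$ exhausts the $2$-torsion of $H_{4n+2}(B\mathrm{Spin}^c;\Z)$ (so that $\eta$ factors through the free quotient). That surjectivity onto the torsion is not available at this point in the paper, and establishing it would amount to a bordism computation of comparable difficulty to what Section~\ref{s:main} does anyway. So for the purposes of this proposition you should simply prove (i) and stop; the uniqueness is deferred.
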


\begin{proof}
Let $M$ be an $(8n{+}2)$-dimensional spin$^{c}$ manifold $M$. 
For any class $x \in H^{4n+2}(B\mathrm{Spin}^{c})$ and any torsion element $t \in TH^{4n}(M)$, the cup product $\tau_M^{\ast} (x) \cdot t$ is a torsion class in $H^{8n+2}(M) \cong \Z$. Consequently, $\tau_M^{\ast} (x) \cdot t = 0$.
We then compute
\begin{align*}
\tau_M^{\ast}(\Theta+\rho_{2} (x) ) \cdot \rho_{2} (t) &= \tau_M^{\ast}(\Theta) \cdot \rho_{2} (t) + \rho_{2} (\tau_M^{\ast} (x) ) \cdot \rho_{2} (t) \\
&=  \rho_{2} (t) \cdot \Sq^{2} \rho_{2} (t) + \rho_{2}(\tau_M^{\ast} (x) \cdot t)\\
&= \rho_{2} (t) \cdot \Sq^{2} \rho_{2} (t).
\end{align*}
Thus, the class $\Theta+\rho_{2} (x)$ satisfies the same defining property as $\Theta$, which completes the proof.
\end{proof}

\begin{proposition}\label{prop:theta0}
The class $\Theta$ is nonzero in $H^{4n+2}(B\mathrm{Spin}^{c};\Z/2)/\rho_{2} ( H^{4n+2}(B\mathrm{Spin}^{c}) )$.
Consequently, both $\beta^{\Z/2} ( \Theta ) \in H^{4n+3}(B\mathrm{Spin}^{c}) $ and $Sq^1 \Theta \in H^{4n+3}(B\mathrm{Spin}^{c}; \Z/2)$ are nonzero.
Furthermore, the class 
\[ \Theta \in H^{4n+2}(B\mathrm{Spin}^{c};\Z/2)/\rho_{2} ( H^{4n+2}(B\mathrm{Spin}^{c}) )\]
is uniquely determined by $Sq^1 \Theta$.
\end{proposition}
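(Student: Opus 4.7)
The plan has two ingredients: a formal Bockstein argument that yields the equivalences and the uniqueness, and a geometric step for the nontriviality.

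First, I would establish that the three conditions
\[ \Theta \in \rho_{2}(H^{4n+2}(B\mathrm{Spin}^{c})), \quad \beta^{\Z/2}(\Theta) = 0, \quad Sq^{1}\Theta = 0 \]
are mutually equivalent. The first equivalence is exactness of \eqref{eq:bseq}. For the second, $Sq^{1} = \rho_{2}\circ\beta^{\Z/2}$ gives the direct implication. For the converse, note that $\beta^{\Z/2}(\Theta)$ lies in the $2$-torsion subgroup $\ker(\times 2) \subset H^{4n+3}(B\mathrm{Spin}^{c})$; combined with the fact (cited from \cite[p.~317]{st68b} in the proof of Theorem~\ref{thm:theta}) that all torsion in $H^{*}(B\mathrm{Spin}^{c})$ has order $2$, this forces $\rho_{2}$ to be injective on $\ker(\times 2)$: if $\beta^{\Z/2}(\Theta) = 2y$, then $4y = 0$ implies $y$ is torsion, hence $2$-torsion, so $\beta^{\Z/2}(\Theta) = 2y = 0$. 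The uniqueness statement follows immediately: if two representatives $\Theta_{1}, \Theta_{2}$ of the class from Theorem~\ref{thm:theta} satisfy $Sq^{1}\Theta_{1} = Sq^{1}\Theta_{2}$, then $Sq^{1}(\Theta_{1}-\Theta_{2}) = 0$, so by the equivalence $\Theta_{1} - \Theta_{2} \in \rho_{2}(H^{4n+2}(B\mathrm{Spin}^{c}))$, i.e., $\Theta_{1} = \Theta_{2}$ in the quotient of Proposition~\ref{prop:thetaz}.

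The remaining content is that $\Theta \notin \rho_{2}(H^{4n+2}(B\mathrm{Spin}^{c}))$. Observing that a representative $\Theta = \rho_{2}(y)$ would force
\[ \langle \rho_{2}(t) \cdot \tau_{M}^{\ast}(\Theta), [M] \rangle = \rho_{2}\langle t \cdot \tau_{M}^{\ast}(y), [M]\rangle = 0 \]
for every $(8n{+}2)$-dimensional spin$^{c}$ manifold $M$ and torsion class $t \in TH^{4n}(M)$ (since $t \cdot \tau_{M}^{\ast}(y)$ is a torsion element of $H^{8n+2}(M) \cong \Z$), Theorem~\ref{thm:theta} would then force $\langle \rho_{2}(t) \cdot Sq^{2}\rho_{2}(t), [M]\rangle = 0$ \emph{universally}. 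Thus it suffices to exhibit a single pair $(M,t)$ where this pairing is nonzero.

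This existence step is where I expect the main difficulty to lie. For $n=1$, I would invoke the Diaconescu--Moore--Witten spin $10$-manifold \cite[Appendix D]{dmw02} with $\beta^{\Z/2}(Sq^{2}v_{4}(M)) \neq 0$: by the Landweber--Stong identity for spin manifolds the desired pairing reduces to $\langle \rho_{2}(t)\cdot Sq^{2}v_{4}(M),[M]\rangle$, and the nontriviality of $\beta^{\Z/2}(Sq^{2}v_{4}(M))$ implies, via Poincar\'e duality and the nondegeneracy of the linking pairing on $TH^{*}(M)$, that this is nonzero for some torsion $t \in TH^{4}(M)$. For general $n \geq 1$, the analogous nonvanishing would be deduced either from a similar explicit example or by a product construction preserving the relevant Wu-class structure; alternatively, one may appeal to the explicit identification of $\Theta$ (in terms of $Sq^{2}v_{4n}$) carried out in Section~\ref{s:main}, together with the fact that $Sq^{3}v_{4n}$ is nonzero as a universal characteristic class on $B\mathrm{Spin}^{c}$.
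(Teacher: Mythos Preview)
Your Bockstein argument for the equivalences and the uniqueness deduction are correct and coincide with the paper's handling of those parts. The divergence is in the nontriviality step.

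Your reduction is sound: if $\Theta \in \rho_2(H^{4n+2}(B\mathrm{Spin}^c))$, then $\langle \rho_2(t)\cdot Sq^2\rho_2(t),[M]\rangle$ would vanish for every pair $(M,t)$, so one nonvanishing instance suffices, and your $n=1$ argument via the Diaconescu--Moore--Witten manifold and Lemma~\ref{lem:rho2} is fine. For general $n$, however, your execution has a gap. The appeal to Section~\ref{s:main} is circular: the identification of $Sq^1\Theta$ there proceeds through Lemma~\ref{lem:nozero}, whose content is precisely $Sq^1\Theta \neq 0$, which is what you are trying to prove. The product construction you allude to does work---taking $M = M_0 \times (\mathbb{H}P^2)^{n-1}$ for the Diaconescu--Moore--Witten $10$-manifold $M_0$ and $t = t_0 \otimes u_1\cdots u_{n-1}$, one checks directly that $\langle \rho_2(t)\cdot Sq^2\rho_2(t),[M]\rangle = \langle \rho_2(t_0)\cdot Sq^2\rho_2(t_0),[M_0]\rangle$ using only $Sq^1\rho_2(u_i) = Sq^2\rho_2(u_i) = 0$---but you have not carried it out, and ``preserving the relevant Wu-class structure'' is beside the point since no Wu classes enter this computation.

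The paper avoids constructing any example. It cites Landweber--Stong \cite[Lemma~3.2]{ls87}, which asserts for every $n$ that the homomorphism $\varphi$ is already nontrivial on the image of $\widetilde{\Omega}^{\mathrm{Spin}}_{8n+2}(K(\Z,4n))$ in spin$^c$ bordism, and then uses the isomorphism $\bar\beta_*$ of Lemma~\ref{lem:qz} to conclude that $\varphi\circ\bar\beta_*$ is nonzero on the spin$^c$ side. This handles all $n$ uniformly without producing a manifold.
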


\begin{proof}
Consider the homomorphism $\varphi \colon \wt{\Omega}_{8n+2}^{\mathrm{Spin}^c}(K(\Z, 4n)) \to \Z/2$ defined by 
$$\varphi([N, f])=\langle f^{\ast}(l_{4n})\cdot \Sq^{2} f^{\ast}(l_{4n}), ~[N] \rangle.$$
Now examine the following commutative diagram:
\begin{equation*}
\begin{split}
\xymatrix{
 \wt{\Omega}_{8n+2}^{\mathrm{Spin}}(K(\Q/\Z, 4n{-}1)) \ar[r]^-{\bar{\beta}_{\ast}} \ar[d]_{i} &\wt{\Omega}_{8n+2}^{\mathrm{Spin}}(K(\Z, 4n))   \ar[d]_{i}  &   \\
  \wt{\Omega}_{8n+2}^{\mathrm{Spin}^{c}}(K(\Q/\Z, 4n{-}1)) \ar[r]^-{\bar{\beta}_{\ast}}  & \wt{\Omega}_{8n+2}^{\mathrm{Spin}^{c}}(K(\Z, 4n))  \ar[r]^-{\varphi} & \Z/2.
}
\end{split}
\end{equation*}
Here, the vertical maps $i$ are the natural forgetful homomorphisms from spin to spin$^c$ bordism.
By Lemma \ref{lem:qz}, the homomorphism $\bar{\beta}_{\ast}$ on the top row is an isomorphism.
Furthermore, according to Landweber and Stong \cite[lemma 3.2]{ls87}, the composition
$ \varphi \circ i $ on the top right is nonzero.
It follows that the composition $\varphi \circ i \circ \bar{\beta}_{\ast}$ on the top left is nontrivial.
By commutativity of the diagram, the composition $\varphi \circ \bar{\beta}_{\ast}$ on the bottom row must also be nonzero.
Theorem \ref{thm:theta} and Proposition \ref{prop:thetaz} then imply that $\Theta \neq 0$.

The remaining assertions follow from the Bockstein sequence \eqref{eq:bseq} for $X = B\mathrm{Spin}^c$ and the fact that all torsion in $H^{\ast}(B\mathrm{Spin}^c)$ has order $2$ (cf. \cite[p. 317, Corollary]{st68b}).
%
\end{proof}


\begin{proposition}
\label{prop:sqtheta}
For any $(8n{+}1)$-dimensional spin$^c$ manifold $M$, we have 
\[  \beta^{\Z/2} ( \tau_M^{\ast} ( \Theta ) ) = 0 \]
and hence $ Sq^1 \tau_M^{\ast}( \Theta  ) = 0$.
\end{proposition}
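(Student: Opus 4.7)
The plan is to reduce the proposition to a single application of Theorem~\ref{thm:theta} on the closed spin$^c$ $(8n{+}2)$-manifold $N := M \times S^1$, and then convert the resulting mod-$2$ vanishing into an integral statement via the linking form on the torsion subgroup of $H^{4n+3}(M;\Z)$. Since $S^1$ is stably parallelizable, the stable tangent bundle of $N$ pulls back from $M$ along the projection $\pi_M \colon N \to M$, so $\tau_N^{\ast}(\Theta) = \pi_M^{\ast}(\tau_M^{\ast}(\Theta))$.

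Given any torsion class $t' \in TH^{4n-1}(M;\Z)$, I would form the external product $t := t' \times \sigma \in TH^{4n}(N;\Z)$, with $\sigma \in H^1(S^1;\Z)$ a generator. Since $H^{\geq 2}(S^1;\Z/2) = 0$, the Cartan formula gives $Sq^2 \rho_2(t) = Sq^2 \rho_2(t') \times \rho_2(\sigma)$, and $\rho_2(\sigma)^2 = 0$ then forces $\rho_2(t) \cdot Sq^2 \rho_2(t) = 0$, so the right-hand side of Theorem~\ref{thm:theta} applied to $(N, t)$ vanishes. A K\"unneth computation of the left-hand side, together with $\langle \rho_2(\sigma), [S^1]\rangle = 1$, yields
\begin{equation*}
\langle \rho_2(t') \cdot \tau_M^{\ast}(\Theta), [M]\rangle = 0 \in \Z/2 \qquad \text{for every } t' \in TH^{4n-1}(M;\Z).
\end{equation*}

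To convert this into the vanishing of $\beta^{\Z/2}(\tau_M^{\ast}(\Theta))$, I would invoke the non-degenerate linking form $L \colon TH^{4n+3}(M;\Z) \otimes TH^{4n-1}(M;\Z) \to \Q/\Z$ on the closed oriented $(8n{+}1)$-manifold $M$. Factoring $\beta^{\Z/2} = \beta^{\Q/\Z} \circ i$ through the inclusion $i \colon \Z/2 \hookrightarrow \Q/\Z$ sending $1 \mapsto \tfrac{1}{2}$, the standard linking formula for Bockstein classes takes the form
\begin{equation*}
L\bigl(\beta^{\Z/2}(\tau_M^{\ast}\Theta),\, t'\bigr) \equiv \tfrac{1}{2}\,\langle \tau_M^{\ast}(\Theta) \cdot \rho_2(t'), [M]\rangle \pmod{\Z},
\end{equation*}
whose right-hand side vanishes by the preceding step. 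Non-degeneracy of $L$ then forces $\beta^{\Z/2}(\tau_M^{\ast}(\Theta)) = 0$, and $Sq^1(\tau_M^{\ast}(\Theta)) = \rho_2(\beta^{\Z/2}(\tau_M^{\ast}(\Theta))) = 0$ is immediate.

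The main obstacle is bookkeeping rather than conceptual: carefully verifying the linking-form identity above, which reduces to tracing the compatibility between the Bockstein sequences \eqref{eq:bseq} and \eqref{boc:q/z}. An alternative route avoiding linking forms uses the universal coefficient theorem and Poincar\'e duality to identify $TH^{4n+3}(M;\Z) \cong \mathrm{Hom}(TH_{4n+2}(M;\Z), \Q/\Z)$ and leads to the same mod-$2$ pairing.
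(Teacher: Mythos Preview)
Your argument is correct, and follows the same two-step skeleton as the paper's proof: first show
\[
\langle \rho_2(t') \cdot \tau_M^{\ast}(\Theta), [M]\rangle = 0 \qquad \text{for all } t' \in TH^{4n-1}(M;\Z),
\]
then convert this into $\beta^{\Z/2}(\tau_M^{\ast}\Theta)=0$ by a duality statement. The paper carries out the second step via Lemma~\ref{lem:rho2} (Massey's annihilator characterization of $\rho_2(H^{\ast}(M;\Z))$), which is exactly equivalent to your linking-form identity; your ``alternative route'' paragraph essentially sketches that lemma.

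Where the two proofs genuinely differ is in step one. The paper argues at the level of bordism groups: it passes through the suspension isomorphism $\wt{\Omega}_{8n+1}^{\mathrm{Spin}^c}(K(\Q/\Z,4n{-}2)) \cong \wt{\Omega}_{8n+2}^{\mathrm{Spin}^c}(\Sigma K(\Q/\Z,4n{-}2))$, and then applies Theorem~\ref{thm:theta} on an $(8n{+}2)$-dimensional representative, using that all cup products vanish in $\wt H^{\ast}$ of a suspension to kill $\rho_2(t)\cdot Sq^2\rho_2(t)$. Your $M\times S^1$ trick is a concrete geometric realization of exactly this suspension argument: the K\"unneth factor $\rho_2(\sigma)$ plays the role of the suspension coordinate, and $\rho_2(\sigma)^2=0$ replaces the vanishing of cup products on a suspension. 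Your version is more elementary and self-contained (no bordism bookkeeping, no need to invoke $K(\Q/\Z, \cdot)$), at the cost of being slightly ad hoc; the paper's version fits more naturally into its ambient bordism-theoretic framework.

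Two minor remarks: you have the sides of Theorem~\ref{thm:theta} reversed (the $Sq^2$ term is the left-hand side as stated), which is purely cosmetic. And for the identification $\tau_N^{\ast}(\Theta)=\pi_M^{\ast}\tau_M^{\ast}(\Theta)$ you might note explicitly that the spin$^c$ structure on $N$ is the product one, so that the classifying map $\tau_N$ indeed factors through $\pi_M$ up to homotopy; this is implicit in ``$S^1$ is stably parallelizable'' but worth a half-sentence since $\Theta$ lives in $H^{\ast}(B\mathrm{Spin}^c)$ rather than $H^{\ast}(BSO)$.
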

\begin{remark} \label{rem:zero}
This result implies that  $\beta^{\Z/2} ( \tau_M^{\ast} ( \Theta ) ) = 0$
and $ Sq^1 \tau_M^{\ast}( \Theta  ) = 0$ for any spin$^c$ manifold $M$ of dimension less than or equal to $8n{+}1$.
\end{remark}

The proof of Proposition \ref{prop:sqtheta} relies on the following lemma.
\begin{lemma}\label{lem:rho2}
Let $M$ be an $m$-dimensional manifold. 
For any $x \in H^k(M; \Z/2)$, the following three statements are equivalent:
\begin{enumerate}
\item[(1)] $ \bocj{} ( x )= 0$;
\item[(2)] There exists an integral class $z \in H^{k}(M)$ such that $\rho_{2}(z) = x$;
\item[(3)] $t \cdot x = 0$ for any torsion class $t \in TH^{m-k}(M)$.
\end{enumerate}
\end{lemma}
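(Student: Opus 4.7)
My plan is to treat the two easy equivalences first and then concentrate on the one essential direction. The equivalence $(1) \Leftrightarrow (2)$ is immediate from the exactness of the Bockstein sequence \eqref{eq:bseq} at $H^{k}(M;\Z/2)$, which says precisely that $\bocj{}(x) = 0$ if and only if $x$ lies in the image of $\rho_2$. For $(2) \Rightarrow (3)$, if $x = \rho_2(z)$ then for any torsion $t \in TH^{m-k}(M)$ the product $t \cdot z$ is a torsion element of $H^m(M) \cong \Z$ (which is torsion-free, since $M$ is closed and oriented), hence zero; applying the ring homomorphism $\rho_2$ gives $\rho_2(t) \cdot x = \rho_2(t \cdot z) = 0$.

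The heart of the argument is $(3) \Rightarrow (1)$. First I would compare the two coefficient sequences $\Z \xrightarrow{\times 2} \Z \to \Z/2$ and $\Z \to \Q \to \Q/\Z$ via the morphism given by the identity on the first factor, $n \mapsto n/2$ on the middle, and $i \colon \Z/2 \hookrightarrow \Q/\Z$, $1 \mapsto 1/2$, on the quotient. Naturality of the Bockstein yields the identification
\[ \bocj{}(x) = \bocqz(i(x)) \in TH^{k+1}(M), \]
where the class is torsion because $2\, i(x) = 0$ in $H^{k}(M;\Q/\Z)$. Next I would invoke the linking pairing
\[ L \colon TH^{k+1}(M) \times TH^{m-k}(M) \to \Q/\Z, \qquad L(\bocqz(\alpha), t) := \langle \alpha \cdot t, [M]\rangle, \]
which is non-degenerate by Poincaré duality together with the universal coefficient theorem. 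Taking $\alpha = i(x)$ and using the compatibility $i_\ast(x \cdot \rho_2(t)) = i(x) \cdot t$ between the coefficient pairings $\Z/2 \otimes \Z/2 \to \Z/2$ and $\Q/\Z \otimes \Z \to \Q/\Z$, the formula becomes
\[ L(\bocj{}(x), t) = i\bigl(\langle x \cdot \rho_2(t), [M]\rangle\bigr) \in \Q/\Z. \]
Hypothesis $(3)$ forces this quantity to vanish for every torsion $t \in TH^{m-k}(M)$, so the non-degeneracy of $L$ delivers $\bocj{}(x) = 0$, which is $(1)$.

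The main obstacle, and the only place that requires care, is the step $(3) \Rightarrow (1)$: one must both identify $\bocj{}$ with $\bocqz \circ i$ by comparing the two short exact coefficient sequences and verify the compatibility of the corresponding cup-product pairings in order to interpret $\langle x \cdot \rho_2(t), [M]\rangle$ as a value of the linking form. Once that identification is in place, the non-degeneracy of the linking pairing on torsion cohomology—a standard consequence of Poincaré duality for closed oriented manifolds—gives the conclusion.
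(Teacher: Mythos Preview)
Your proof is correct. The equivalence $(1)\Leftrightarrow(2)$ and the implication $(2)\Rightarrow(3)$ are handled exactly as one would expect, and your argument for $(3)\Rightarrow(1)$ via the linking pairing is sound: the identification $\bocj{}(x)=\bocqz(i_*(x))$ follows from the morphism of coefficient sequences you describe, the coefficient compatibility $i_*(x\cdot\rho_2(t))=i_*(x)\cdot t$ is straightforward, and the non-degeneracy of the linking form on torsion then forces $\bocj{}(x)=0$.

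The paper takes a different route for the nontrivial direction. Rather than passing to the $\Q/\Z$-valued linking form, it stays entirely in $\Z/2$ coefficients: Poincar\'e duality makes the mod~$2$ cup product pairing $H^{k}(M;\Z/2)\times H^{m-k}(M;\Z/2)\to\Z/2$ non-degenerate, and a lemma of Massey identifies $\rho_2(H^{k}(M))$ as the exact annihilator of $\rho_2(TH^{m-k}(M))$ under this pairing. That gives $(2)\Leftrightarrow(3)$ in one stroke, with $(1)\Leftrightarrow(2)$ again coming from the Bockstein sequence. The paper's version is shorter because it outsources the essential duality statement to Massey's result; your argument is more self-contained, effectively reproving the relevant half of Massey's lemma by unpacking the linking form. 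Both approaches ultimately rest on the same Poincar\'e-duality input, just packaged differently.
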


\begin{proof}
The Poincar\'e Duality Theorem implies that the bilinear form 
\[\cup \colon H^{k}(M; \Z/2) \times H^{m-k}(M;\Z/2) \to H^m(M;\Z/2) \cong \Z/2\]
is nondegenerate.
By Massey \cite[Lemma 1]{ma62}, The image $\rho_{2} ( H^{k}(M) )$ is the annihilator of $\rho_{2}( TH^{m-k}(M) )$.
The claimed equivalences now follow from this fact combined with the exactness of the Bockstein sequence \eqref{eq:bseq}.
\end{proof}

\begin{proof}[Proof of Proposition \ref{prop:sqtheta}]

Define homomorphisms
$$ \begin{array}{ll} \vspace{8pt}
\cdot \Theta \colon \wt{\Omega}_{8n+1}^{\mathrm{Spin}^{c}}(K(\Z, 4n-1)) \rightarrow \Z/2, &
\qquad [N, f] \mapsto \langle f^{\ast}(l_{4n-1}) \cdot \tau_N^{\ast}(\Theta), [N] \rangle,\\ \vspace{8pt}
\cdot \Theta \colon \wt{\Omega}_{8n+2}^{\mathrm{Spin}^{c}}(\Sigma K(\Z, 4n-1)) \rightarrow \Z/2, &
\qquad [N, f] \mapsto \langle f^{\ast} ( \sigma (l_{4n-1}) ) \cdot \tau_N^{\ast}(\Theta), [N] \rangle,\\
\cdot \Theta \colon \wt{\Omega}_{8n+2}^{\mathrm{Spin}^{c}}(K(\Z, 4n)) \rightarrow \Z/2, &
\qquad [N, f] \mapsto \langle f^{\ast} (l_{4n}) \cdot \tau_N^{\ast}(\Theta), [N] \rangle.
\end{array} $$
These fit into a commutative diagram:
\begin{equation*}
\begin{split}
\xymatrix{
& \wt{\Omega}_{8n+1}^{\mathrm{Spin}^{c}}(K(\Q/\Z, 4n-2)) \ar[r]^-{\sigma} \ar[d]_{\bar{\beta}_{\ast}} & \wt{\Omega}_{8n+2}^{\mathrm{Spin}^{c}}(\Sigma K(\Q/\Z, 4n-2))  \ar[d]_{\Sigma\bar{\beta}_{\ast}} & \\
& \wt{\Omega}_{8n+1}^{\mathrm{Spin}^{c}}(K(\Z, 4n-1)) \ar[r]^-{\sigma} \ar[d]_{\cdot \Theta} & \wt{\Omega}_{8n+2}^{\mathrm{Spin}^{c}}(\Sigma K(\Z, 4n-1)) \ar[r]^-{\psi_{\ast}} \ar[d]_{\cdot \Theta} & \wt{\Omega}_{8n+2}^{\mathrm{Spin}^{c}}(K(\Z, 4n)) \ar[d]_{\cdot \Theta}\\
& \Z/2 \ar@{=}[r] & \Z/2 \ar@{=}[r] & \Z/2,
}
\end{split}
\end{equation*}
where $\bar{\beta}_{\ast}$, $\Sigma \bar{\beta}_{\ast}$ and $\psi_{\ast}$ are the homomorphisms induced from $\bar{\beta},~\Sigma \bar{\beta}$ and $\psi$, respectively, and $\psi \colon \Sigma K(\Z, 4n-1) \to K(\Z, 4n)$ is the map satisfying $\psi^{\ast} ( l_{4n}) = \sigma (l_{4n-1})$.

We claim that the composition $\cdot \Theta \circ \psi_{\ast} \circ \Sigma \bar{\beta}_{\ast}$ is the zero map; the proof is given below.
This implies
\begin{equation}\label{eq:w6beta}
\cdot \Theta \circ \bar{\beta}_{\ast}  = 0,
\end{equation}
by the commutativity of the diagram.
 
Now, for any torsion class $t \in TH^{4n-1}(M)$, there exists an element $z \in H^{4n-2}(M; \Q/\Z)$
such that $\bocqz (z) = t$.
By Equation \eqref{eq:betaf}, we have $\bar{\beta} \circ f_{z} = f_{t}$.
Applying Equation \eqref{eq:w6beta} yields
\begin{align*}
\langle t \cdot \tau_M^{\ast}(\Theta), [M] \rangle =  \cdot \Theta ([M, f_{t}]) = \cdot \Theta ([M, \bar{\beta} \circ f_{z}]) =  \cdot \Theta \circ \bar{\beta}_{\ast} ([M, f_{z}]) = 0.
\end{align*}
Since this holds for all torsion classes $t \in TH^{4n-1}(M)$, Lemma \ref{lem:rho2} implies that
$\beta^{\Z/2} ( \tau_M^{\ast} ( \Theta ) ) = 0$, and hence $ Sq^1 \tau_M^{\ast}( \Theta  ) = 0$, which completes the proof.

It remains to prove the claim.
Set 
\[ t := \Sigma \bar{\beta}^{\ast} \circ \psi^{\ast} (l_{4n}) \in H^{4n}(\Sigma K(\Q/\Z, 4n{-}2)). \]
For any $[N, f] \in \wt{\Omega}_{8n+2}^{\mathrm{Spin}^{c}}(\Sigma K(\Q/\Z, 4n-2))$, since $t$ is a torsion class and the cup product on
$\wt H^{*}(\Sigma K(\Q/\Z, 4n-2))$ is trivial, Theorem \ref{thm:theta} implies that  
\begin{equation*}
\cdot \Theta \circ \psi_{\ast} \circ \Sigma \bar{\beta}_{\ast} ([N, f]) = \langle f^{\ast} (t) \cdot \tau_N^{\ast}(\Theta), [N] \rangle = \langle f^{\ast} ( t \cdot \Sq^{2} ( t ) ), [N] \rangle = 0,
\end{equation*}
which completes the proof of the claim.
\end{proof}

%



\section{Proof of Theorem \ref{thm:main}}
\label{s:main}
Building on the results from Sections \ref{s:exis} and \ref{s:prop}, this section is devoted to the proof of Theorem \ref{thm:main}.
For convenience, throughout this section, $H^{\ast}(X)$ will denote the mod $2$ cohomology ring of a $CW$-complex $X$.

\subsection{Outline of the Proof}
\label{ss:idea}
According to Theorem \ref{thm:theta}, proving Theorem \ref{thm:main} reduces to determining the class $\Theta \in H^{4n+2}(B\mathrm{Spin}^c)$.
By Proposition \ref{prop:theta0}, this is equivalent to identifying the class $Sq^1 \Theta \in H^{4n+3}(B\mathrm{Spin}^c)$.
The identification of this class is guided by Propositions \ref{prop:theta0} and \ref{prop:sqtheta}.

Proposition \ref{prop:sqtheta} implies that $Sq^1 \Theta  \neq 0 \in H^{4n+3}(B\mathrm{Spin}^c)$.
By the universal coefficient theorem, this means:
\begin{lemma}\label{lem:nozero}
There exists an element $x \in H_{4n+3}(B\mathrm{Spin}^c)$ such that $\langle Sq^1 \Theta, x \rangle \neq 0$. \qed
\end{lemma}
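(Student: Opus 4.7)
The plan is to derive this lemma as a direct consequence of Proposition~\ref{prop:theta0} combined with the universal coefficient theorem for mod~$2$ coefficients. Proposition~\ref{prop:theta0} already asserts that $Sq^1 \Theta$ is a nonzero element of $H^{4n+3}(B\mathrm{Spin}^c; \Z/2)$, which, under the section's convention that $H^{\ast}$ denotes the mod~$2$ cohomology ring, is written simply as $H^{4n+3}(B\mathrm{Spin}^c)$. Since $\Z/2$ is a field, the universal coefficient theorem provides a natural isomorphism
\[ H^{4n+3}(B\mathrm{Spin}^c; \Z/2) \xrightarrow{\cong} \mathrm{Hom}_{\Z/2}\bigl(H_{4n+3}(B\mathrm{Spin}^c; \Z/2), \Z/2\bigr), \]
sending each cohomology class to the linear functional it defines via the Kronecker product. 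Applying this isomorphism to $Sq^1 \Theta$ yields a nonzero functional on $H_{4n+3}(B\mathrm{Spin}^c; \Z/2)$, so any $x \in H_{4n+3}(B\mathrm{Spin}^c)$ on which this functional does not vanish is the desired element.

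There is essentially no technical obstacle, as the entire content of the lemma is to repackage the already-established nonvanishing of $Sq^1 \Theta$ as a pairing statement. The one point worth flagging is that, consistent with the convention adopted at the start of Section~\ref{s:main}, $H_{4n+3}(B\mathrm{Spin}^c)$ must be read as mod~$2$ homology; otherwise the conclusion would actually fail. Indeed, since $Sq^1 \Theta = \rho_{2}(\beta^{\Z/2} \Theta)$ factors through the $2$-torsion class $\beta^{\Z/2}\Theta \in H^{4n+3}(B\mathrm{Spin}^c; \Z)$, and $2$-torsion integral cohomology classes pair trivially with every integer homology class (the image of $\langle \beta^{\Z/2}\Theta, \cdot\rangle$ is a $2$-torsion subgroup of the torsion-free group $\Z$, hence zero), the pairing $\langle Sq^1 \Theta, \cdot\rangle$ would be identically zero on integer homology. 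Thus the proof amounts to nothing more than invoking $\Z/2$-duality, and no further computation is required.
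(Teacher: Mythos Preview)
Your proof is correct and follows essentially the same route as the paper: the nonvanishing of $Sq^1\Theta$ (established in Proposition~\ref{prop:theta0}) combined with the universal coefficient theorem over the field $\Z/2$. Your citation of Proposition~\ref{prop:theta0} is in fact more accurate than the paper's own sentence preceding the lemma, which refers to Proposition~\ref{prop:sqtheta} (a vanishing result) and appears to be a typo; and your explicit remark that $H_{4n+3}(B\mathrm{Spin}^c)$ must be read with $\Z/2$ coefficients---and why the statement would fail integrally---is a useful clarification that the paper leaves implicit.
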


Furthermore, Proposition \ref{prop:sqtheta} and Remark \ref{rem:zero} imply that $Sq^1 \tau_{M}^{\ast} (\Theta) = 0 \in H^{4n+3}(M)$ for any $(8n{-}1)$-dimensional spin$^c$ manifold $M$.
By the Poincar\'e Duality Theorem, this implies:
\begin{lemma}\label{lem:zero}
For any $(8n{-}1)$-dimensional spin$^c$ manifold $M$ and any class $y \in H^{4n-4}(M)$, we have
\[ \langle y \cdot Sq^1 \tau_{M}^{\ast} ( \Theta ), [M] \rangle = \langle Sq^1 \Theta, \tau_{M\ast}([M] \cap y) \rangle = 0, \]
where $\tau_M \colon M \rightarrow B\mathrm{Spin}^{c}$ classifies the stable tangent bundle of $M$. \qed
\end{lemma}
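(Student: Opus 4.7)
The plan is to verify the two equalities of the lemma separately; neither requires new ideas beyond what has been established. The first is a formal consequence of the naturality of $Sq^1$ and of the cup–cap adjunction, while the second is a direct application of Proposition~\ref{prop:sqtheta} (via Remark~\ref{rem:zero}) to the $(8n{-}1)$-dimensional manifold $M$.

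First I would pull the Steenrod square through the classifying map using naturality, writing $Sq^1 \tau_M^{\ast}(\Theta) = \tau_M^{\ast}(Sq^1 \Theta)$. The standard identity $\langle a \cdot b, [M] \rangle = \langle b, [M] \cap a \rangle$ applied to $a = y$ and $b = \tau_M^{\ast}(Sq^1 \Theta)$ then gives
\[
\langle y \cdot Sq^1 \tau_M^{\ast}(\Theta), [M] \rangle = \langle \tau_M^{\ast}(Sq^1 \Theta), [M] \cap y \rangle.
\]
Applying naturality of the cap-Kronecker pairing under $\tau_M$ rewrites the right-hand side as $\langle Sq^1 \Theta, \tau_{M\ast}([M] \cap y) \rangle$, which yields the middle expression of the lemma.

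For the vanishing, I would invoke Proposition~\ref{prop:sqtheta} together with Remark~\ref{rem:zero}: since $\dim M = 8n-1 \le 8n+1$, that remark supplies $Sq^1 \tau_M^{\ast}(\Theta) = 0$ in $H^{4n+3}(M)$. Consequently $y \cdot Sq^1 \tau_M^{\ast}(\Theta) = 0 \in H^{8n-1}(M)$, and in particular its Kronecker pairing with $[M]$ vanishes.

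The only bookkeeping worth checking is the degrees: $\Theta \in H^{4n+2}(B\mathrm{Spin}^c)$ so $Sq^1 \Theta \in H^{4n+3}$; together with $y \in H^{4n-4}(M)$ this places $y \cdot Sq^1 \tau_M^{\ast}(\Theta)$ in the top degree $H^{8n-1}(M)$, and places $[M] \cap y$ in $H_{4n+3}(M)$, which matches the degree of $Sq^1 \Theta$. There is no substantive obstacle: the lemma is really just the Poincar\'e-dual reformulation of Proposition~\ref{prop:sqtheta}, which is why the author records it with a \qed directly inside the statement.
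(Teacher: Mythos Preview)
Your proposal is correct and matches the paper's approach exactly: the paper derives the lemma by applying Proposition~\ref{prop:sqtheta} (via Remark~\ref{rem:zero}) to get $Sq^1 \tau_M^{\ast}(\Theta)=0$ on the $(8n{-}1)$-dimensional manifold $M$, and then invokes Poincar\'e duality/cup--cap adjunction, which is precisely why the statement carries an in-line \textbackslash qed.
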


Analogous to the definition of $\mathcal{P}$ in Section \ref{s:exis}, for any positive integers $i$ and $r$, define a homomorphism
\[ \mathcal{P}_2 \colon \wt{\Omega}_{r+i}^{\mathrm{Spin}^c}( K(\Z/2, r) ) \to H_i(B\mathrm{Spin}^c) \]
by
\[ \mathcal{P}_2 ([N, f]) = \tau_{N\ast} ( [N] \cap f^{\ast}(l_r) ) \]
for any bordism class $[N, f]\in  \wt{\Omega}^{\mathrm{Spin}^c}_{r+i}(K(\Z/2, r))$ represented by $f\colon N \rightarrow K(\Z/2, r)$.
Here and subsequently, the generator of $H^{k}(K(\Z/2, k)) \cong \Z/2$ is also denoted by $l_k$.
We have the following lemma.
\begin{lemma}\label{lem:oh2}
For any fixed positive integer $i$ and sufficiently large $r$, the map
\begin{align*}
\mathcal{P}_2 \colon \wt{\Omega}_{i+r}^{\mathrm{Spin}^c}( K(\Z/2, r) ) \to H_i(B\mathrm{Spin}^c) 
\end{align*}
is an isomorphism. \qed
\end{lemma}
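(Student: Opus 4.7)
The plan is to follow the proof of Lemma \ref{lem:oh} verbatim, substituting $\Z/2$ coefficients for $\Z$ coefficients throughout. Concretely, I would establish the chain of isomorphisms
\begin{align*}
\varinjlim_{r} \wt{\Omega}_{r+i}^{\mathrm{Spin}^{c}}(K(\Z/2, r)) & \cong  \varinjlim_{s, r} \pi_{r+8s+i}(M\mathrm{Spin}^{c}(8s) \wedge K(\Z/2, r)) \\
& \cong  \varinjlim_{s} \wt{H}_{8s+i}(M\mathrm{Spin}^{c}(8s); \Z/2) \\
& \cong \varinjlim_{s} H_{i}(B\mathrm{Spin}^{c}(8s); \Z/2) \\
& = H_{i}(B\mathrm{Spin}^{c}; \Z/2)
\end{align*}
and then track the image of a bordism class $[N, f]$ through the composite to match the formula defining $\mathcal{P}_2$.

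First I would invoke the Pontryagin-Thom construction, which identifies the reduced spin$^c$ bordism of a based space $X$ with the colimit $\varinjlim_s \pi_{8s+\ast}(M\mathrm{Spin}^{c}(8s) \wedge X)$; this yields the first isomorphism. The second isomorphism expresses the defining property of the mod $2$ Eilenberg-MacLane spectrum $H\Z/2 = \{K(\Z/2, r)\}_r$: for any connective spectrum such as $M\mathrm{Spin}^{c}(8s)$, the colimit $\varinjlim_r \pi_{r+k}(M\mathrm{Spin}^{c}(8s) \wedge K(\Z/2, r))$ computes $\wt{H}_k(M\mathrm{Spin}^{c}(8s); \Z/2)$. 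The third isomorphism is the mod $2$ Thom isomorphism, which — unlike its integral counterpart — needs no extra orientation hypothesis. Taking the colimit over $s$ produces $H_i(B\mathrm{Spin}^c; \Z/2)$, which under the convention of Section \ref{s:main} is denoted simply by $H_i(B\mathrm{Spin}^c)$.

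Finally, I would verify that the composite sends $[N, f]$ to $\tau_{N\ast}([N] \cap f^{\ast}(l_r))$, so that it coincides with $\mathcal{P}_2$. This mirrors the analogous verification for $\mathcal{P}$ in the proof of Lemma \ref{lem:oh}: the collapse map underlying the Pontryagin-Thom identification corresponds geometrically to capping with the fundamental class, and the Thom isomorphism corresponds to pushing forward along $\tau_N$. Since each of the three isomorphisms is a standard coefficient-change of the integral version already used in Lemma \ref{lem:oh}, no step presents a genuine obstacle; if anything, the mod $2$ Thom isomorphism is slightly easier to invoke than its integral counterpart. The main thing to be careful about is the bookkeeping in Section \ref{s:main}'s convention that $H^{\ast}$ and $H_{\ast}$ denote mod $2$ (co)homology, so that the symbol $H_i(B\mathrm{Spin}^c)$ in the statement really does mean $H_i(B\mathrm{Spin}^c; \Z/2)$.
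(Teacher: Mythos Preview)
Your proposal is correct and is exactly the approach the paper intends: the statement is marked with a \qed\ precisely because it is the mod~$2$ analogue of Lemma~\ref{lem:oh}, obtained by replacing $K(\Z,r)$ with $K(\Z/2,r)$ and integral (co)homology with mod~$2$ (co)homology throughout. Your observation about the Section~\ref{s:main} convention that $H_i(B\mathrm{Spin}^c)$ denotes mod~$2$ homology is the only subtlety, and you have identified it correctly.
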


For any positive integer $m$ and large $r$, consider the cofibration
\begin{align} \label{cof:Psi}
\Sigma^{r-4m}K(\Z/2, 4m)  \xrightarrow{\varPsi} K(\Z/2,r) \xrightarrow{\pi_{\varPsi}} C_{\varPsi},
\end{align}
where $\varPsi \colon \Sigma^{r-4m}K(\Z/2, 4m) \rightarrow K(\Z/2, r)$
is the map satisfying
$\varPsi^{\ast} l_{r} = \sigma^{r-4m} l_{4m}$.
This cofibration induces the following diagram:
\begin{equation}\label{seq:Psi}
\begin{split}
\xymatrix{
\wt{\Omega}^{\mathrm{Spin}^c}_{r+4m+7}( \Sigma^{r-4m}K(\Z/2, 4m) ) \ar[r]^-{\varPsi_{\ast}} & \wt{\Omega}^{\mathrm{Spin}^c}_{r+4m+7}( K(\Z/2, r) ) \ar[r]^-{\pi_{\varPsi \ast}}  \ar[d]_-{\mathcal{P}_2} ^-{\cong}& \wt{\Omega}^{\mathrm{Spin}^c}_{r+4m+7}( C_{\varPsi} ) \\
\wt{\Omega}^{\mathrm{Spin}^c}_{8m+7}( K(\Z/2, 4m) ) \ar[u]^{\sigma^{r-4m}}_-{\cong} & H_{4m+7}(B\mathrm{Spin}^c) & 
}
\end{split}
\end{equation}
where the horizontal sequence is the exact sequence of reduced bordism groups induced by the cofibration \eqref{cof:Psi}, 
$\sigma^{r-4m}$ is the $(r{-}4m)$-fold suspension isomorphism, 
and $\mathcal{P}_2$ is the isomorphism defined above. 
%
It follows easily from Lemma \ref{lem:oh2} that the composition $\mathcal{P}_2 \circ \varPsi_{\ast} \circ \sigma^{r-4m}$ is given by 
\begin{equation}\label{eq:Psi}
\mathcal{P}_2 \circ \varPsi_{\ast} \circ \sigma^{r-4m} ([N,f]) = \tau_{N\ast}([N]\cap f^{\ast} (l_{4m})),
\end{equation}
for any bordism class $[N, f]\in  \wt{\Omega}^{\mathrm{Spin}^{c}}_{8m+7}(K(\Z/2, 4m))$ represented by $f\colon N \rightarrow K(\Z/2, 4m)$.

Now, set $m = n{-}1$.
For any bordism class $[N, f]\in  \wt{\Omega}^{\mathrm{Spin}^{c}}_{8n-1}(K(\Z/2, 4n{-}4))$, Lemma \ref{lem:zero} and Equation \eqref{eq:Psi} imply that 
\begin{align*}
\langle Sq^1 \Theta, \mathcal{P}_2 \circ \varPsi_{\ast} \circ \sigma^{r-4n+4} ([N,f])\rangle = \langle Sq^1 \Theta, \tau_{N\ast}([N]\cap f^{\ast} (l_{4n-4})) \rangle  = 0.
\end{align*}
This means that for any $x \in \mathrm{Im}  ( \mathcal{P}_2 \circ \varPsi_{\ast} ) \subset H_{4n+3}(B\mathrm{Spin}^c)$, we must have
\begin{equation*}
\langle Sq^1 \Theta, x \rangle = 0.
\end{equation*}
Since $\mathcal{P}_2$ is an isomorphism,
combining this fact with Lemma \ref{lem:nozero} 
shows that $\varPsi_{\ast}$ is not surjective.
Therefore, in some sense, $Sq^1 \Theta$ must lie in the cokernel of $\varPsi_{\ast}$, i.e., the image of $\pi_{\varPsi\ast}$.
Thus, to determine $Sq^1 \Theta$, it is necessary to compute the spin$^c$ bordism group $\wt{\Omega}^{\mathrm{Spin}^c}_{r+4m+7}(C_{\varPsi}) $, 
and identify the image of $\pi_{\varPsi\ast}$.

The computation of $\wt{\Omega}^{\mathrm{Spin}^c}_{r+4m+7}(C_{\varPsi}) $ is lengthy and constitutes the majority of this section.
Recall that $M\mathrm{Spin}^{c}(8s)$ is the Thom space of the classifying bundle over $B\mathrm{Spin}^{c}(8s)$.
For large $s$, we have the isomorphism
\[ \Omega_{r+4m+7}^{\mathrm{Spin}^c} (C_{\varPsi}) \cong \pi_{r+8s+4m+7}(M\mathrm{Spin}^c(8s) \wedge C_{\varPsi}). \]
For convenience, let $\mathcal{M}$ denote the smash product $M\mathrm{Spin}^c(8s) \wedge C_{\varPsi}$.
The strategy for computing this bordism group is as follows:
First, determine the mod $2$ cohomology groups of $\mathcal{M}$;
then, select a set of generators to construct a map $f$ from $\mathcal{M}$ to a product of Eilenberg-MacLane spaces;
finally, prove that $f$ induces an isomorphism on the  $(r{+}8s{+}4m{+}7)$-th homotopy groups, thereby fully determining the bordism group $\Omega_{r+4m+7}^{\mathrm{Spin}^c} (C_{\varPsi}) $.

This proof strategy is due to Landweber and Stong \cite{ls87}. 

The remainder of Section \ref{s:main} is organized as follows.
After some preliminaries in Subsection \ref{ss:pre}, the mod $2$ cohomology groups of $C_{\varPsi}$ and $M\mathrm{Spin}^c(8s)$ are described in Subsections \ref{ss:cofi} and \ref{ss:mspin}, respectively. 
The bordism group $\Omega_{r+4m+7}^{\mathrm{Spin}^c} (C_{\varPsi})$ is determined in Subsection \ref{ss:borCPsi}, and the class $Sq^1 \Theta$ is identified in Subsection \ref{ss:pfmain}.




\subsection{Preliminaries}
\label{ss:pre}

To compute the spin$^c$ bordism group $\wt{\Omega}^{\mathrm{Spin}^c}_{r+4m+7}(C_{\varPsi})$ and prove Theorem \ref{thm:main}, we require some preliminaries.


For any $CW$-complex $X$, denote by
\[  Sq^i \colon H^k(X; \Z/2) \to H^{k+i}(X; \Z/2), ~i \ge 0 \]
the Steenrod squares.
These are homomorphisms satisfying naturality; $Sq^0$ is the identity map; $Sq^1 = \rho_2 \circ \beta^{\Z/2}$ (see sequence \eqref{eq:bseq}); $Sq^i x = x^2$ if $|x| = i$, and $Sq^i x = 0$ if $|x| < i$.
Moreover, the Steenrod squares commute with the suspension isomorphism $\sigma$, i.e., $Sq^i \circ \sigma = \sigma \circ Sq^i$, $i \ge 0$,
and satisfy the Cartan formula:
\begin{equation}\label{eq:cartan}
Sq^i (x \cdot y) = \Sigma_{j} Sq^j x \cdot Sq^{i-j} y.
\end{equation}
Compositions of Steenrod squares satisfy the Adem relations:
\begin{align}\label{eq:adem}
Sq^a Sq^b = \sum_{c=0}^{[a/2]}\binom{b-1-c}{a-2c}Sq^{a+b-c} Sq^c
\end{align}
where $0 < a < 2b$, and $[a/2]$ denotes the greatest integer less than or equal to $a/2$. 
By convention, the binomial coefficient $\binom{x}{y}$ is zero if $x$ or $y$ is negative, or if $x < y$;
also, $\binom{x}{0} = 1$ for $x \ge 0$.

A monomial $Sq^{i_1}\cdots Sq^{i_k}$, the composition of the individual operations $Sq^{i_j}$ for $1 \le j \le k$, is denoted by $Sq^{I}$, where $I = (i_1, \cdots, i_k)$.
Let $d(I) = \Sigma_{j=1}^{k} i_j$ denote the degree of $Sq^{I}$. 
The operation $Sq^{I}$ is called admissible if $i_j \ge 2i_{j+1}$ for each $j$.
The excess of an admissible $Sq^{I}$ is defined as 
\[ e(I)=\Sigma_j(i_j - 2i_{j+1}). \] 
Then the mod $2 $ cohomology ring $H^{\ast}(K(\Z/2, n))$ can be described as follows (cf. Hatcher \cite[Theorem 5.32]{hat-bss}).
\begin{lemma}\label{lem:mac}
$H^{\ast}(K(\Z/2, n))$ is the polynomial ring $\Z/2[Sq^{I}(l_n)]$, where $l_n$ is the fundamental class of $H^{n}(K(\Z/2,n))$ and $I$ ranges over all admissible sequence with excess $e(I) < n$.
\end{lemma}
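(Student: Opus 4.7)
The plan is to proceed by induction on $n$ using the mod $2$ Serre spectral sequence of the path-loop fibration. For the base case $n=1$, we have $K(\Z/2, 1) \simeq \mathbb{RP}^{\infty}$, whose mod $2$ cohomology is the polynomial ring $\Z/2[l_1]$; the only admissible sequence $I$ with $e(I) < 1$ is the empty sequence, giving the single generator $l_1$, so the claim holds.

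For the inductive step, assume the lemma for $n-1$ and consider the path-loop fibration
\[ K(\Z/2, n-1) \longrightarrow PK(\Z/2, n) \longrightarrow K(\Z/2, n) \]
whose total space is contractible. The Serre spectral sequence therefore converges to $\Z/2$ concentrated in total degree zero, and the inductive hypothesis identifies the fiber cohomology as the polynomial ring $\Z/2[Sq^J(l_{n-1})]$ over admissible $J$ with $e(J) < n-1$. The fundamental class $l_{n-1}$ transgresses to $l_n$, and by Kudo's transgression theorem the Steenrod operations commute with transgression, so each polynomial generator $Sq^J(l_{n-1})$ transgresses to $Sq^J(l_n)$. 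Applying Borel's theorem on transgression (whose hypotheses are met because the fiber cohomology is polynomial on transgressive generators and the total space is acyclic) yields that $H^{\ast}(K(\Z/2, n))$ is the polynomial algebra on the transgressions together with their iterated Frobenius squares, that is, on the classes $(Sq^J(l_n))^{2^k}$ with $J$ admissible, $e(J) < n-1$, and $k \geq 0$.

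The remaining step, and the main technical obstacle, is a purely combinatorial identification: one must show that this generating set coincides with $\{Sq^I(l_n) : I \text{ admissible}, \, e(I) < n\}$. The pivotal observation is that if $I = (i_1, i_2, \dots, i_k)$ is admissible with $e(I) \geq n$, then writing $I' = (i_2, \dots, i_k)$ the class $Sq^{I'}(l_n)$ has degree $n + d(I') \leq i_1$, forcing $Sq^{i_1}$ to act by squaring, so $Sq^I(l_n) = (Sq^{I'}(l_n))^2$. Iterating this reduction expresses every admissible $Sq^I(l_n)$ of excess $\geq n$ as a Frobenius power of a class of strictly smaller excess, and a Poincaré-series bookkeeping argument — comparing Serre's count of admissible sequences with the output of Borel's theorem — verifies that no unwanted relations are introduced, completing the identification.
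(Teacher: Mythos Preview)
The paper does not prove this lemma; it is quoted as a known result with a reference to Hatcher. Your outline via the path--loop fibration, Kudo's theorem, and Borel's transgression theorem is exactly the classical Serre argument, so in spirit you are supplying the proof the paper omits.

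There is, however, a genuine error in your application of Borel's theorem. Transgression does \emph{not} commute with squaring: if $x$ is transgressive of degree $d$, then $\tau(x^{2}) = Sq^{d}\tau(x)$, and since $\tau(x)$ has degree $d+1$ this is not $(\tau(x))^{2} = Sq^{d+1}\tau(x)$. Hence the simple system $\{(Sq^{J}l_{n-1})^{2^{k}}\}$ of the fibre transgresses to classes obtained by \emph{prepending} top Steenrod squares to $Sq^{J}l_{n}$, not to the Frobenius powers $(Sq^{J}l_{n})^{2^{k}}$ you wrote down. Your claimed generating set is visibly wrong already for $n=2$: there the only $J$ with $e(J)<1$ is the empty sequence, so your generators are $l_{2},\,l_{2}^{2},\,l_{2}^{4},\dots$, which generate only $\Z/2[l_{2}]$ and miss $Sq^{1}l_{2}$, $Sq^{2}Sq^{1}l_{2}$, etc.

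The correct combinatorial step is to show that the transgressions $\tau\bigl((Sq^{J}l_{n-1})^{2^{k}}\bigr)$, namely the classes $Sq^{2^{k-1}d}\cdots Sq^{2d}Sq^{d}Sq^{J}l_{n}$ with $d=n-1+d(J)$, are exactly the $Sq^{I}l_{n}$ with $I$ admissible and $e(I)<n$. One checks that prepending $Sq^{d}$ to such a $J$ yields an admissible sequence of excess exactly $n-1$, and iterating keeps the excess at $n-1$; conversely every admissible $I$ of excess $n-1$ arises this way. Your final paragraph, which instead reduces sequences of excess $\geq n$, is addressing a different (and, as stated, slightly imprecise: excess $>n$ gives zero, not a square) bookkeeping issue and does not repair the mismatch.
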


Finally, from the cohomology Serre spectral sequence (cf. \cite[p.68, Proposition 3.2.1]{kochman} or \cite[p.145, Example 5.D]{mccleary}), we have
\begin{lemma}[Serre Long Exact Cohomology Sequence]
\label{lem:serre}
Let $F \xrightarrow{i} E \xrightarrow{\pi} B$ be a fibration where $B$ is $(m-1)$-connected ($m \ge 2$) and $F$ is $(n-1)$-connected ($n \ge 1$).
For any abelian group $G$ and $p = m+n-1$, there is a long exact sequence:
\begin{align*}
H^1(E;G) \xrightarrow{i^{\ast}} H^1(F;G) \xrightarrow{\tau} H^2(B;G) \xrightarrow{\pi^{\ast}} \cdots \xrightarrow{\pi^{\ast}} H^{p}(E;G) \xrightarrow{i^{\ast}} H^{p}(F;G),
\end{align*}
where $\tau$ is the transgression. 
\end{lemma}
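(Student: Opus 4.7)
The plan is to derive the claimed long exact sequence as a direct consequence of the cohomological Serre spectral sequence $E_2^{p,q} = H^p(B; H^q(F;G)) \Rightarrow H^{p+q}(E;G)$, exploiting the connectivity hypotheses to force the $E_2$-page into a ``fringe'' shape through total degree $m+n-1$. Because $B$ is $(m-1)$-connected and $F$ is $(n-1)$-connected, $H^p(B;G)=0$ for $0<p<m$ and $H^q(F;G)=0$ for $0<q<n$. Moreover, $B$ is simply connected (as $m \ge 2$), so the coefficient system $\{H^q(F;G)\}$ on $B$ is simple and $E_2^{p,q}$ is just an ordinary cohomology group. Consequently, in total degree $\le m+n-1$ the only possibly nonzero $E_2^{p,q}$ lie on the base edge $q=0$, on the fiber edge $p=0$, or at the origin.

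Next I would analyze the differentials inside this fringe. The base-edge terms $E_r^{p,0}$ with $p \ge m$ can only be hit by the transgression $d_r\colon E_r^{0,r-1}\to E_r^{r,0}$ (all other potential sources lie in a forbidden column/row), and each fiber-edge term $E_r^{0,q}$ supports at most the transgression differential $d_{q+1}$. Thus, for $p+q \le m+n-1$, $E_\infty^{p,0}$ is the cokernel of the transgression $\tau\colon H^{p-1}(F;G)\to H^p(B;G)$, while $E_\infty^{0,q}$ is its kernel. Because each diagonal $p+q=k \le m+n-1$ carries at most one nonzero $E_\infty$-position, the Serre filtration on $H^k(E;G)$ collapses into short exact sequences $0 \to E_\infty^{k,0} \to H^k(E;G) \to E_\infty^{0,k} \to 0$ in which the inclusion is the base-edge pullback $\pi^{\ast}$ and the quotient is the fiber-edge restriction $i^{\ast}$. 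Splicing these short exact sequences together using $\tau$ as the connecting map assembles precisely the long exact sequence stated in the lemma.

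The only genuinely substantive point, and thus the main obstacle, is verifying that the connecting homomorphism produced by this splicing really is the Serre transgression $\tau$ in the sense in which it is used elsewhere in the paper (rather than merely ``some'' connecting map). This compatibility is standard and is worked out in \cite[Proposition 3.2.1]{kochman} as well as \cite[Example 5.D]{mccleary}, so for the purposes of the present paper it is enough to invoke these references directly rather than reprove the statement from scratch.
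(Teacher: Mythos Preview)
Your sketch is correct and is exactly the standard derivation of the Serre long exact sequence from the spectral sequence. Note, however, that the paper does not actually prove this lemma: it simply states it with a citation to \cite[p.~68, Proposition 3.2.1]{kochman} and \cite[p.~145, Example 5.D]{mccleary}, which are the very references you invoke at the end. So your proposal is more detailed than what the paper supplies, but entirely in line with it.
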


\begin{remark}
It is known that the Steenrod Squares $Sq^i$, $i \ge 0$, commute with the transgression $\tau$.
\end{remark}


\subsection{Mod $2$ Cohomology Groups of $C_{\varPsi}$}
\label{ss:cofi}

This subsection analyzes the cofibration \eqref{cof:Psi} to determine the mod $2$ cohomology groups of $C_{\varPsi}$ up to dimension  $r+4m+9$.

\begin{lemma}\label{lem:cpsi}
$C_{\varPsi}$ is $(r{+}4m)$-connected, and $\pi_{r+4m+1}(C_{\varPsi}) \cong \Z/2$.
\end{lemma}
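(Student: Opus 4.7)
The plan is to analyze the cofibration \eqref{cof:Psi} in mod $2$ cohomology, transfer the resulting information to integral homology via the $2$-primary nature of $K(\Z/2, n)$, and conclude via the Hurewicz theorem.

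The first step is to describe $\varPsi^{\ast} \colon H^{\ast}(K(\Z/2, r); \Z/2) \to H^{\ast}(\Sigma^{r-4m}K(\Z/2, 4m); \Z/2)$, which sends $Sq^{I} l_r$ to $\sigma^{r-4m}(Sq^{I} l_{4m})$. Using Lemma \ref{lem:mac}, the source in degree $r+k$ is spanned by admissible $Sq^{I} l_r$ with $d(I)=k$, and the target (via the suspension isomorphism with $H^{4m+k}(K(\Z/2, 4m); \Z/2)$) is spanned by admissible $Sq^{I} l_{4m}$ with $e(I) < 4m$ together with decomposable monomials, the lowest of which, $l_{4m}^{2} = Sq^{4m} l_{4m}$, sits in degree $8m$, i.e., degree $r+4m$ after suspension. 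Using the inequality $e(I) \le d(I)$ for admissible $I$ (so that no admissible $I$ of degree $< 4m$ can have $e(I) \ge 4m$), and observing that $Sq^{4m} l_r$ maps precisely to $\sigma^{r-4m}(l_{4m}^{2})$, I would verify that $\varPsi^{\ast}$ is an isomorphism in degrees $\le r+4m$. In degree $r+4m+1$, direct inspection shows that $\ker \varPsi^{\ast}$ is spanned by $Sq^{4m+1} l_r$ (which dies because $Sq^{4m+1} l_{4m} = 0$) and $\coker \varPsi^{\ast}$ is spanned by the new decomposable $\sigma^{r-4m}(l_{4m} \cdot Sq^{1} l_{4m})$. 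The long exact sequence of the cofibration then yields
\[ \wt{H}^{n}(C_{\varPsi}; \Z/2) = 0 \text{ for } n \le r+4m, \qquad \wt{H}^{r+4m+1}(C_{\varPsi}; \Z/2) \cong \Z/2. \]

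Since $\wt{H}_{\ast}(K(\Z/2, n); \Z)$ is $2$-primary torsion in positive degrees, so is $\wt{H}_{\ast}(\Sigma^{r-4m}K(\Z/2, 4m); \Z)$, and the integral cofiber long exact sequence transfers this to $\wt{H}_{\ast}(C_{\varPsi}; \Z)$. Combined with the mod $2$ vanishing above and the universal coefficient theorem, this forces $\wt{H}_{n}(C_{\varPsi}; \Z) = 0$ for $n \le r+4m$. Since $C_{\varPsi}$ is simply connected, the Hurewicz theorem then gives $(r+4m)$-connectivity and an isomorphism $\pi_{r+4m+1}(C_{\varPsi}) \cong H_{r+4m+1}(C_{\varPsi}; \Z)$. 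The latter is a $2$-primary torsion group whose mod $2$ reduction is $\Z/2$, so it is isomorphic to $\Z/2^{k}$ for some $k \ge 1$.

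The main obstacle is showing $k = 1$. For this I would verify that the mod $2$ Bockstein $Sq^{1} u$ of the generator $u$ of $H^{r+4m+1}(C_{\varPsi}; \Z/2)$ is nonzero: by the Adem relation $Sq^{1} Sq^{4m+1} = 0$ (since the binomial coefficient $\binom{4m}{1}$ is even), $\pi_{\varPsi}^{\ast}(Sq^{1} u) = 0$, so $Sq^{1} u$ lies in the one-dimensional image of the connecting homomorphism $\delta$ in degree $r+4m+2$, and a secondary-operation (equivalently, an integral-long-exact-sequence and Bockstein spectral sequence) computation identifies it with the nonzero generator $\delta\,\sigma^{r-4m}(l_{4m} \cdot Sq^{1} l_{4m})$. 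This last identification, which forces $k=1$, is the only non-routine step; everything preceding it is standard bookkeeping with admissible sequences and Steenrod operations.
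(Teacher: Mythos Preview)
Your argument for the $(r{+}4m)$-connectivity is correct and is essentially the homological unpacking of what the paper summarizes in one line via Freudenthal.

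For the identification $\pi_{r+4m+1}(C_{\varPsi})\cong\Z/2$, however, your route diverges from the paper's and leaves a real gap. The paper passes to stable homotopy (legitimate by Freudenthal once connectivity is known), uses the long exact sequence of the cofibration together with $\pi^{s}_{r+4m}(K(\Z/2,r))=\pi^{s}_{r+4m+1}(K(\Z/2,r))=0$, and then invokes Brown's computation $\pi^{s}_{8m}(K(\Z/2,4m))\cong\Z/2$. Your approach instead reduces everything to showing $Sq^{1}u\neq 0$, where $u=\overline{\alpha_{1}}$, and you explicitly flag this as the ``only non-routine step'' without carrying it out.

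The problem is that this step is precisely Lemma~\ref{lem:sq1a1} of the paper, and the paper's proof of Lemma~\ref{lem:sq1a1} \emph{uses} Lemma~\ref{lem:cpsi}: it argues that $\rho_{2}\colon H^{r+4m+1}(C_{\varPsi};\Z/4)\to H^{r+4m+1}(C_{\varPsi};\Z/2)$ is zero because $\pi_{r+4m+1}(C_{\varPsi})\cong\Z/2$. So you cannot appeal to that argument without circularity. Your gesture toward ``a secondary-operation (equivalently, an integral-long-exact-sequence and Bockstein spectral sequence) computation'' does not resolve this: running the integral long exact sequence of the cofibration requires the integral homology of $K(\Z/2,n)$ through degree $n+4m+1$, which is not elementary bookkeeping and is comparable in depth to Brown's result; and the secondary operation associated to $Sq^{1}Sq^{4m+1}=0$ is not obviously computable on $l_{4m}$ without a similar input. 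In short, the missing identification $Sq^{1}u=\gamma_{2}$ is not routine, and an independent proof of it is essentially as hard as what the paper cites from Brown. You should either supply that computation in full or switch to the paper's stable-homotopy argument.
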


\begin{proof}
The $(r{+}4m)$-connectivity of $C_{\varPsi}$ follows directly from its construction and the Freudenthal suspension theorem (Lemma \ref{lem:freu}).

Since $C_{\varPsi}$ is $(r{+}4m)$-connected, the Freudenthal suspension theorem implies that 
\[ \pi_{r+4m+1}(C_{\varPsi}) \cong \pi_{r+4m+1}^s (C_{\varPsi}), \] 
where $\pi_{r+4m+1}^s (C_{\varPsi})$ is the $(r{+}4m{+}1)$-th stable homotopy group of $C_{\varPsi}$.
The exact sequence of stable homotopy groups for the cofibration \eqref{cof:Psi} yields
\[ \pi_{r+4m+1}^s (C_{\varPsi}) \cong \pi_{r+4m}^s( \Sigma^{r-4m} K(\Z/2, 4m) ).  \]
According to Brown \cite[Lemma (1.2)]{brn72}, 
\[ \pi_{r+4m}^s( \Sigma^{r-4m} K(\Z/2, 4m) ) \cong \pi_{8m}^s (K(\Z/2, 4m)) \cong \Z/2, \]
which completes the proof.
\end{proof}


Consider the exact sequence in mod $2$ cohomology induced by the cofibration \eqref{cof:Psi}:
\begin{align}\label{exact:Psi}
\cdots \to \widetilde{H}^{\ast}(C_{\varPsi}) \xrightarrow{\pi_{\varPsi}^{\ast}} \widetilde{H}^{\ast}(K(\Z/2, r)) \xrightarrow{\varPsi^{\ast}} \widetilde{H}^{\ast} (\Sigma^{r-4m}K(\Z/2, 4m)) \xrightarrow{\delta} \widetilde{H}^{\ast+1}(C_{\varPsi}) \to \cdots.
\end{align}
Let $ (\Im \pi_{\varPsi}^{\ast})^{+j}$ denote the image of 
\[ \pi_{\varPsi}^{\ast} \colon \widetilde{H}^{r+4m+j}(C_{\varPsi}) \to \widetilde{H}^{r+4m+j}(K(\Z/2, r)), \]
let $( \mathrm{Ker} \varPsi^{\ast} )^{+j}$ denote the kernel of 
\[ \varPsi^{\ast} \colon \widetilde{H}^{r+4m+j}(K(\Z/2, r)) \to \widetilde{H}^{r+4m+j} (\Sigma^{r-4n}K(\Z/2, 4m)),\]
and let $( \Im \delta )^{+j}$ denote the image of 
\[ \delta \colon \widetilde{H}^{r + 4m + j - 1} (\Sigma^{r-4m}K(\Z/2, 4m)) \to \widetilde{H}^{r + 4m + j}(C_{\varPsi}).\]
From the exact sequence \eqref{exact:Psi}, we have $ (\Im \pi_{\varPsi}^{\ast})^{+j} = ( \mathrm{Ker} \varPsi^{\ast} )^{+j}$ and 
\begin{equation}\label{eq:hcPsi}
 \widetilde{H}^{r+4m+j}(C_{\varPsi}) \cong (\Im \pi_{\varPsi}^{\ast})^{+j} \oplus ( \Im \delta )^{+j}  = ( \mathrm{Ker} \varPsi^{\ast} )^{+j} \oplus ( \Im \delta )^{+j}.
 \end{equation}
Since $r$ is large, for fixed $m$ and $j \le 9$, the group $H^{r+4m+j}(K(\Z/2, r))$ has a basis given by the classes $Sq^{I}l_r$ with $I$ admissible and $d(I) = 4m{+}j$.
Because the Steenrod Squares commute with the suspension isomorphism $\sigma$, we have
\begin{equation}\label{eq:sqlr}
\varPsi^{\ast} ( Sq^{I} l_r ) = \sigma^{r-4m} Sq^{I} l_{4m}.
\end{equation}
Thus, $(\Im \pi_{\varPsi}^{\ast})^{+j} = ( \mathrm{Ker} \varPsi^{\ast} )^{+j}$ has a basis given by those $Sq^{I} l_r$ with $I$ admissible, $d(I) = 4m+j$, and $e(I) > 4m$.

Furthermore, assuming $m \ge 2$,  for $j \le 9$, 
the group $( \Im \delta )^{+j}$ (isomorphic to the cokernel of $\varPsi^{\ast}$) has a basis given by classes $\delta \sigma Sq^{I_1}l_{4m}  Sq^{I_2}l_{4m}$, where $I_1$ and $I_2$ are admissible sequences with $d(I_1) + d(I_2) = j-1$, $e(I_1 )< 4m$, $e(I_2) < 4m$, and $I_1 \neq I_2 $. 
Here, $\sigma$ denotes $\sigma^{r-4m}$, the $(r{-}4m)$-fold suspension isomorphism.
 (Note: if $m = 2$, the element $\delta \sigma l_{4m}^3$ should be added to the basis of  $( \Im \delta )^{+9}$, 
 but since it does not affect the subsequent calculation of $\wt{\Omega}^{\mathrm{Spin}^c}_{r+4m+7}(C_{\varPsi})$, we omit it and consider $( \Im \delta )^{+9}$ generated only by the classes $\delta \sigma Sq^{I_1}l_{4m}  Sq^{I_2}l_{4m}$.)

Using the isomorphisms \eqref{eq:hcPsi} and the basis descriptions above, the mod $2$ cohomology groups $\widetilde{H}^{r+4m+j}(C_{\varPsi})$ for $j \le 9$ can be determined.
However, to simplify the calculation of $\Omega_{r+4m+7}^{\mathrm{Spin}^c} (C_{\varPsi})$, it is useful to modify the basis.

For the groups $(\Im \pi_{\varPsi}^{\ast})^{+j} = ( \mathrm{Ker} \varPsi^{\ast} )^{+j}$ with $j \le 9$, define
\begin{equation*}
\alpha_{2^j}  = Sq^{ 4m + 2^j } l_r , ~ \text{for}~0 \le j \le 3.
\end{equation*}
Let $\mathscr{A}$ be the mod $2$ Steenrod algebra. 
Using the Adem relations \eqref{eq:adem}, a straightforward calculation shows that, through dimension $r + 4m + 9$, $\Im \pi_{\varPsi}^{\ast} =  \mathrm{Ker} \varPsi^{\ast} $ is an $\mathscr{A}$-module generated by $\alpha_{1}$, $\alpha_{2}$, $\alpha_{4}$, and $\alpha_{8}$, subject to the relations:
\begin{align}
Sq^1 \alpha_1 & = 0, \label{eq:a1}\\
Sq^3 \alpha_1 & = Sq^2 \alpha_2, \label{eq:a1a2} \\
Sq^4 \alpha_4 & = \delta_m \alpha_8 + Sq^6 \alpha_2 + Sq^7 \alpha_1, \label{eq:a4}
\end{align}
where $\delta_m = 0$ if $m$ is even, and $\delta_m = 1$ if $m$ is odd.
From these relations, and the Adem relations \eqref{eq:adem} $ Sq^1 Sq^{2k} = Sq^{2k+1}$, $Sq^1 Sq^{2k+1} = 0$, $Sq^2 Sq^2 = Sq^3 Sq^1$ and $Sq^2 Sq^3 = Sq^5 + Sq^4 Sq^1$,
we also obtain: 
\begin{align} 
Sq^5 \alpha_1  & = Sq^3 Sq^1 \alpha_2,  \label{eq:sq5a1} \\
Sq^5 \alpha_4  & = \delta_m Sq^1 \alpha_8 + Sq^7 \alpha_2. \label{eq:sq5a4}
\end{align} 
The basis of $ (\Im \pi_{\varPsi}^{\ast})^{+j} = ( \mathrm{Ker} \varPsi^{\ast} )^{+j}$, $ j \le 9$ is listed in Table $1$.
\begin{center}
\begin{tabular}{|c|c|l|} \multicolumn{3}{c} {Table $1$. Basis of $ (\Im \pi_{\varPsi}^{\ast})^{+j} = ( \mathrm{Ker} \varPsi^{\ast} )^{+j}$ } \\
\hline
\multicolumn{1}{|c}{\rule{0pt}{14pt} $j$} &
\multicolumn{1}{|c}{$(\Im \pi_{\varPsi}^{\ast})^{+j} $} &
\multicolumn{1}{|c|}{Basis}  \\
\hline
\rule{0pt}{14pt} $1$ & $ \Z/2$ &  $\alpha_1$ \\
\hline
\rule{0pt}{14pt} $2$ & $ \Z/2$ &  $\alpha_2$ \\
\hline
\rule{0pt}{14pt} $3$ & $ (\Z/2)^2$ &  $Sq^2 \alpha_1$, $Sq^1 \alpha_2$, \\
\hline
\rule{0pt}{14pt} $4$ & $( \Z/2)^2$ & $Sq^3 \alpha_1$, $\alpha_4$ \\
\hline
\rule{0pt}{14pt} $5$ & $ (\Z/2)^3$ & $Sq^4 \alpha_1$, $Sq^2 Sq^1 \alpha_2$, $Sq^1 \alpha_4$ \\
\hline
\rule{0pt}{14pt} $6$ & $ (\Z/2)^3$ & $Sq^5 \alpha_1$, $Sq^4 \alpha_2$, $Sq^2 \alpha_4$\\
\hline
\rule{0pt}{14pt} $7$ & $ (\Z/2)^{5}$ & $Sq^6 \alpha_1$, $Sq^4 Sq^2 \alpha_1$, $Sq^5 \alpha_2$, $Sq^3 \alpha_4$, $Sq^2 Sq^1 \alpha_4$,     \\  
\hline
\rule{0pt}{14pt} $8$ &  $(\Z/2)^{5}$ & $Sq^7 \alpha_1$, $Sq^5 Sq^2 \alpha_1$, $Sq^6 \alpha_2$, $Sq^3 Sq^1 \alpha_4$, $\alpha_8$ \\
\hline
\rule{0pt}{14pt} $9$ & $(\Z/2)^{7}$ & $Sq^8 \alpha_1$, $Sq^6 Sq^2 \alpha_1$, $Sq^7 \alpha_2$, $Sq^6 Sq^1 \alpha_2$, $Sq^4 Sq^2 Sq^1 \alpha_2$, $Sq^4 Sq^1 \alpha_4$, $Sq^1 \alpha_8$ \\
\hline
\end{tabular}
\end{center}
\vspace{5pt}

For the groups $( \Im \delta )^{+j}$ with $j \le 9$, define
\begin{align*}
\gamma_j & = \delta \sigma l_{4m} Sq^{j-1} l_{4m},  & \text{for}~ 2 \le  j \le 9, \\ 
\gamma_{j1} & = \delta \sigma l_{4m} Sq^{j-2} Sq^1 l_{4m}, & \text{for}~ 7 \le  j \le 9.
\end{align*}
Since the Steenrod squares commute with $\sigma$ and $\delta$, and since
\[ \delta \sigma Sq^I l_{4m} Sq^I l_{4m} = \delta \sigma \Sq^{d(I) + 4m} Sq^I  l_{4m} = \Sq^{d(I) + 4m} Sq^I \delta \varPsi^{\ast} l_r = 0 \]
for any $I = (i_1, \cdots, i_k)$, it follows from the Cartan formula \eqref{eq:cartan} and the Adem relations \eqref{eq:adem} that
\begin{align}
Sq^1 \gamma_2 & = 0, \label{eq:sq1g2}\\
Sq^3 Sq^1 \gamma_3 & = Sq^5 \gamma_2, \label{eq:sq5g2}\\
Sq^5 Sq^1 \gamma_3  & = 0. \label{eq:sq51g3}
\end{align} 
Through dimension $r + 4m + 9$, $ \Im \delta $ is an $\mathscr{A}$-module generated by $\gamma_j$ ($2 \le  j \le 9$) and $\gamma_{j1}$ ($7 \le  j \le 9$), subject to relations \eqref{eq:sq1g2}-\eqref{eq:sq51g3}.
The basis of $(\Im \delta)^{+j} $ for $j \le 9$ is listed in Table $2$.
\begin{center}
\begin{tabular}{|c|c|l|} \multicolumn{3}{c} {Table $2$. Basis of $(\Im \delta)^{+j}$ } \\
\hline
\multicolumn{1}{|c}{\rule{0pt}{14pt} $j$} &
\multicolumn{1}{|c}{$(\Im \delta)^{+j}$} &
\multicolumn{1}{|c|}{Basis}  \\
\hline
\rule{0pt}{14pt} $\le 1$ & $0$ &  \\
\hline
\rule{0pt}{14pt} $2$ & $ \Z/2$ &  $\gamma_2$ \\
\hline
\rule{0pt}{14pt} $3$ & $ \Z/2$ &  $\gamma_3$ \\
\hline
\rule{0pt}{14pt} $4$ & $( \Z/2)^3$ & $Sq^2 \gamma_2$, $Sq^1 \gamma_3$, $\gamma_4$ \\
\hline
\rule{0pt}{14pt} $5$ & $ (\Z/2)^4$ & $Sq^3 \gamma_2$, $Sq^2 \gamma_3$, $Sq^1 \gamma_4$, $\gamma_5$ \\
\hline
\rule{0pt}{14pt} $6$ & $ (\Z/2)^6$ & $Sq^4 \gamma_2$, $Sq^3 \gamma_3$, $Sq^2 Sq^1 \gamma_3$, $Sq^2 \gamma_4$, $Sq^1 \gamma_5$, $\gamma_6$ \\
\hline
\rule{0pt}{14pt} $7$ & $ (\Z/2)^{8}$ & $Sq^5 \gamma_2$, $Sq^4 \gamma_3$, $Sq^3 \gamma_4$, $Sq^2 Sq^1 \gamma_4$, $Sq^2 \gamma_5$, $Sq^1 \gamma_6$, $\gamma_{7}$, $\gamma_{71}$    \\  
\hline
\rule{0pt}{14pt} $8$ &  $(\Z/2)^{13}$ & $Sq^6 \gamma_2$, $Sq^4 Sq^2 \gamma_2$, $Sq^5 \gamma_3$, $Sq^4 Sq^1 \gamma_3$, $Sq^4 \gamma_4$, $Sq^3 Sq^1 \gamma_4$, 
$Sq^3 \gamma_5$, $Sq^2 Sq^1 \gamma_5$, 
\\ & & $Sq^2 \gamma_6$, $Sq^1 \gamma_{7}$, $Sq^1 \gamma_{71}$, $\gamma_{8}$, $\gamma_{81}$ \\
\hline
\rule{0pt}{14pt} $9$ & $(\Z/2)^{16}$ & $Sq^7 \gamma_2$, $Sq^5 Sq^2 \gamma_2$, $Sq^6 \gamma_3$, $Sq^4 Sq^2 \gamma_3$, $Sq^5 \gamma_4$, $Sq^4 Sq^1 \gamma_4$,
$Sq^4 \gamma_5$, $Sq^3 Sq^1 \gamma_5$, 
\\ & & $Sq^3 \gamma_6$, $Sq^2 Sq^1 \gamma_6$, $Sq^2 \gamma_{7}$, $Sq^2 \gamma_{71}$, $Sq^1 \gamma_{8}$, $Sq^1 \gamma_{81}$, $\gamma_{9}$, $\gamma_{91}$ \\
\hline
\end{tabular}
\end{center}
\vspace{5pt}

%

Based on the isomorphisms \eqref{eq:hcPsi} and the basis descriptions in Tables $1$ and $2$, the mod $2$ cohomology groups  $\widetilde{H}^{+j}(C_{\varPsi}) $ for $j \le 9$ and their bases can be summarized as follows.

Let $\overline{ \alpha_{2^j} } \in H^{\ast}(C_{\varPsi})$ denote an element satisfying 
\[ \pi_{\varPsi}^{\ast}( \overline{ \alpha_{2^j} } ) = \alpha_{2^j} \in H^{\ast}(K(\Z/2,r)). \] 
Let $\widetilde{H}^{+j}(C_{\varPsi})$ denote the $(r{+}4m{+}j)$-th mod $2$ cohomology group of $C_{\varPsi}$.
The groups  $\widetilde{H}^{+j}(C_{\varPsi}) $ for $j \le 9$ and their bases are listed in Table $3$.

\begin{center}
\begin{tabular}{|c|c|l|}\multicolumn{3}{c} { Table $3$. Mod $2$ Cohomology Groups of $C_{\varPsi}$ } \\
\hline
\multicolumn{1}{|c}{\rule{0pt}{14pt} $j$} &
\multicolumn{1}{|c}{$ \widetilde{H}^{+j}(C_{\varPsi}) $} &
\multicolumn{1}{|c|}{Basis of $\widetilde{H}^{+j}(C_{\varPsi})$}  \\
\hline
\rule{0pt}{14pt} $1$ & $\Z/2$ & $\overline{ \alpha_1 }$ \\
\hline
\rule{0pt}{14pt} $2$ & $ (\Z/2)^2$ & $\overline{ \alpha_2 }$, $\gamma_2$ \\
\hline
\rule{0pt}{14pt} $3$ & $ (\Z/2)^3$ & $ Sq^2 \overline{ \alpha_1 }$, $ Sq^1 \overline{ \alpha_2 }$, $\gamma_3$ \\
\hline
\rule{0pt}{14pt} $4$ & $ (\Z/2)^5$ & $ Sq^3 \overline{ \alpha_1 } $,  $\overline{ \alpha_4 }$, $Sq^2 \gamma_2$,~$Sq^1 \gamma_3$,~$\gamma_4$ \\
\hline
\rule{0pt}{14pt} $5$ & $ (\Z/2)^7$ & $ Sq^4 \overline{ \alpha_1 } $,  $Sq^2 Sq^1 \overline{ \alpha_2 }$,  $ Sq^1 \overline{ \alpha_4 }$, $Sq^3 \gamma_2$,~$Sq^2 \gamma_3$,~$Sq^1\gamma_4$,~$\gamma_5$ \\
\hline
\rule{0pt}{14pt} $6$ & $ (\Z/2)^9$ & $ Sq^5 \overline{ \alpha_1 } $,  $Sq^4 \overline{ \alpha_2 }$,  $ Sq^2 \overline{ \alpha_4 }$, $Sq^4 \gamma_2$, $Sq^3 \gamma_3$, $Sq^2 Sq^1 \gamma_3$, $Sq^2 \gamma_4$, $Sq^1 \gamma_5$, $\gamma_6$ \\
\hline
\rule{0pt}{14pt} $7$ & $ (\Z/2)^{13}$ & $ Sq^6 \overline{ \alpha_1 } $, $Sq^4 Sq^2 \overline{ \alpha_1 }$, $Sq^5 \overline{ \alpha_2 }$,  $ Sq^3 \overline{ \alpha_4 }$, $Sq^2 Sq^1 \overline{ \alpha_4 }$,
\\ && $Sq^5 \gamma_2$, $Sq^4 \gamma_3$, $Sq^3 \gamma_4$, $Sq^2 Sq^1 \gamma_4$, $Sq^2 \gamma_5$, $Sq^1 \gamma_6$, $\gamma_7$, $\gamma_{71}$ \\
\hline
\rule{0pt}{14pt} $8$ &  $(\Z/2)^{18}$ & $Sq^7 \overline{ \alpha_1 } $, $ Sq^5 Sq^2 \overline{ \alpha_1} $, $ Sq^6 \overline{ \alpha_2}$, $Sq^3 Sq^1 \overline{ \alpha_4 } $, $\overline{ \alpha_8} $,
\\ & &  $Sq^6 \gamma_2$, $Sq^4 Sq^2 \gamma_2$, $Sq^5 \gamma_3$, $Sq^4 Sq^1 \gamma_3$, $Sq^4 \gamma_4$, $Sq^3 Sq^1 \gamma_4$, 
$Sq^3 \gamma_5$, $Sq^2 Sq^1 \gamma_5$, 
\\ & & $Sq^2 \gamma_6$, $Sq^1 \gamma_7$, $Sq^1 \gamma_{71}$, $\gamma_8$, $\gamma_{81}$ \\
\hline
\rule{0pt}{14pt} $9$ & $(\Z/2)^{23}$ & $Sq^8 \overline{ \alpha_1 } $, $Sq^6 Sq^2 \overline{ \alpha_1 } $, $Sq^7 \overline{ \alpha_2 } $, $Sq^6 Sq^1 \overline{ \alpha_2 } $, $ Sq^4 Sq^2 Sq^1 \overline{ \alpha_2 } $, $Sq^4 Sq^1 \overline{ \alpha_4 } $, $Sq^1 \overline{ \alpha_8 } $,
\\ & & $Sq^7 \gamma_2$, $Sq^5 Sq^2 \gamma_2$, $Sq^6 \gamma_3$, $Sq^4 Sq^2 \gamma_3$, $Sq^5 \gamma_4$, $Sq^4 Sq^1 \gamma_4$, $Sq^4 \gamma_5$, $Sq^3 Sq^1 \gamma_5$, 
\\ & & $Sq^3 \gamma_6$, $Sq^2 Sq^1 \gamma_6$, $Sq^2 \gamma_{7}$, $Sq^2 \gamma_{71}$, $Sq^1 \gamma_{8}$, $Sq^1 \gamma_{81}$, $\gamma_{9}$, $\gamma_{91}$ \\
\hline
\end{tabular}
\end{center}
\hspace{5pt}

Moreover, through dimension $r + 4m + 9$, $ \widetilde{H}^{\ast}(C_{\varPsi})$ is an $\mathscr{A}$ module generated by $\overline{ \alpha_{2^j}}$ ($0 \le j \le 3$), $\gamma_j$ ($2 \le  j \le 9$), and $\gamma_{j1} $ ($7 \le  j \le 9$), subject to Relations \eqref{eq:sq1g2} - \eqref{eq:sq51g3} and the following additional relations.

\begin{lemma}
\label{lem:sq1a1}
$Sq^1 \overline{ \alpha_1 } = \gamma_2$.
\end{lemma}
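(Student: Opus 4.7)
The plan is to apply $\pi_{\varPsi}^{\ast}$ to both sides and use naturality of $Sq^1$ to constrain $Sq^1 \overline{\alpha_1}$ to a one-dimensional space inside $\widetilde H^{+2}(C_{\varPsi})$, then show the relevant coefficient is $1$. By naturality of the Steenrod squares with respect to $\pi_{\varPsi}$,
\[
\pi_{\varPsi}^{\ast}(Sq^1 \overline{\alpha_1}) \;=\; Sq^1 \pi_{\varPsi}^{\ast}(\overline{\alpha_1}) \;=\; Sq^1 \alpha_1 \;=\; Sq^1 Sq^{4m+1} l_r \;=\; 0,
\]
using either the Adem relation $Sq^1 Sq^{2j+1}=0$, or the decomposition $Sq^{4m+1} = Sq^1 Sq^{4m}$ combined with $Sq^1 \circ Sq^1 = 0$. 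Exactness of the long exact sequence \eqref{exact:Psi} in degree $r+4m+2$ therefore places $Sq^1 \overline{\alpha_1}$ in $(\Im \delta)^{+2}$, which by Table $2$ is the copy of $\Z/2$ spanned by $\gamma_2$. Consequently $Sq^1 \overline{\alpha_1}$ is either $0$ or $\gamma_2$, and the entire content of the lemma is to eliminate the zero case.

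The main obstacle is thus to prove nontriviality, and my plan is to exploit the secondary-operation structure behind $\overline{\alpha_1}$. Since $\alpha_1 = Sq^1(Sq^{4m} l_r)$ and $Sq^1 \circ Sq^1 = 0$, the lift $\overline{\alpha_1}$ represents the value of the functional Steenrod square associated to this relation applied to $Sq^{4m} l_r$. The class $Sq^{4m} l_r$ itself does not lift to $C_{\varPsi}$ because $\varPsi^{\ast}(Sq^{4m} l_r) = \sigma^{r-4m} l_{4m}^2 \ne 0$; what makes $Sq^1(Sq^{4m} l_r) = \alpha_1$ lift is that $Sq^1(l_{4m}^2) = 2\,l_{4m}\cdot Sq^1 l_{4m} = 0$ in characteristic $2$. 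The cochain-level witness for this vanishing is precisely the class $l_{4m}\cdot Sq^1 l_{4m} \in H^{8m+1}(K(\Z/2,4m))$, which is the mod $2$ reduction of the integral class $\tfrac12 Sq^1(l_{4m}^2)$. Tracing this witness through the connecting homomorphism $\delta \circ \sigma^{r-4m}$ in the cofibration sequence yields exactly $\gamma_2$.

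To make the heuristic rigorous, I would prefer the following integral-lift argument. Let $\widetilde H^{r+4m+1}(C_{\varPsi};\Z)$ be computed via the cohomology sequence of \eqref{cof:Psi} with $\Z$ coefficients and compared to the mod $2$ sequence by the Bockstein ladder. The goal is to show that $\overline{\alpha_1}$ is \emph{not} in the image of $\rho_2$, i.e.\ it admits no integral lift. Once this is established, $Sq^1 \overline{\alpha_1} = \rho_2\,\beta^{\Z/2}(\overline{\alpha_1})$ is forced to be nonzero, hence equal to $\gamma_2$. The key input will be that $\beta^{\Z/2}(Sq^{4m} l_r)$ restricts nontrivially to $\Sigma^{r-4m}K(\Z/2,4m)$ (via $\beta^{\Z/2}(l_{4m}^2)$), so the class $\alpha_1 = \rho_2 \beta^{\Z/2}(Sq^{4m} l_r)$, while pulling back to zero modulo $2$, fails to pull back to zero integrally on $A$. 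This mismatch is precisely what prevents $\overline{\alpha_1}$ from being a mod $2$ reduction and produces the nonzero $Sq^1$. Alternative routes — direct cochain-level Steenrod computation, or invoking the formalism of functional Steenrod squares and their compatibility with the connecting map $\delta$ — would also work, but the Bockstein comparison seems the cleanest to write down in the present framework.
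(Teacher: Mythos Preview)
Your reduction step is correct and identical to the paper's: naturality and $Sq^1\alpha_1=0$ force $Sq^1\overline{\alpha_1}\in(\Im\delta)^{+2}=\Z/2\cdot\gamma_2$, so everything hinges on nontriviality.

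For nontriviality, however, your argument has a genuine gap. You write that once $\overline{\alpha_1}$ is shown to have no integral lift, ``$Sq^1\overline{\alpha_1}=\rho_2\beta^{\Z/2}(\overline{\alpha_1})$ is forced to be nonzero.'' That implication is false in general: $\beta^{\Z/2}(x)\neq 0$ does \emph{not} give $\rho_2\beta^{\Z/2}(x)\neq 0$ (think of the order-$2$ element in a $\Z/4$ summand). What is true is the $\Z/4$ statement: $Sq^1 x=0$ iff $x$ lifts to $\Z/4$ coefficients. Your integral-lift route can be repaired, but only by also controlling $H^{r+4m+2}(C_{\varPsi};\Z)$ via the universal coefficient theorem (torsion $\cong\mathrm{Ext}(\Z/2,\Z)\cong\Z/2$, a direct summand, so its mod~$2$ reduction is nonzero). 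That in turn requires knowing $H_{r+4m+1}(C_{\varPsi};\Z)\cong\Z/2$, i.e.\ Lemma~\ref{lem:cpsi}, which you never invoke. The elaborate discussion of $\beta^{\Z/2}(Sq^{4m}l_r)$ restricting nontrivially to $\Sigma^{r-4m}K(\Z/2,4m)$ is a detour that does not close this gap.

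The paper's argument is much shorter and bypasses the issue entirely: by Lemma~\ref{lem:cpsi}, $C_{\varPsi}$ is $(r{+}4m)$-connected with $\pi_{r+4m+1}(C_{\varPsi})\cong\Z/2$, so $H^{r+4m+1}(C_{\varPsi};\Z/4)\cong\mathrm{Hom}(\Z/2,\Z/4)\cong\Z/2$ and the reduction $\rho_2\colon H^{r+4m+1}(C_{\varPsi};\Z/4)\to H^{r+4m+1}(C_{\varPsi};\Z/2)$ is the zero map. Exactness of the $\Z/2\to\Z/4\to\Z/2$ Bockstein sequence then makes $Sq^1$ injective on $H^{r+4m+1}(C_{\varPsi};\Z/2)$, and you are done. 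I recommend replacing your second half with this argument.
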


\begin{proof}
By Identity \eqref{eq:a1}, $Sq^1  \alpha_1 = 0$ in $H^{\ast}(K(\Z/2, r))$, so $Sq^1 \overline{ \alpha_1 } $ must lie in $( \Im \delta )^{+2}$, which is isomorphic to $\Z/2$ and generated by $\gamma_2$ (Table $2$).
Thus, it suffices to prove that $Sq^1 \overline{ \alpha_1 } \neq 0$.

Consider the Bockstein sequence for $C_{\varPsi}$ associated to the coefficient sequence $\Z/2 \xrightarrow{ \times 2} \Z/4 \to \Z/2$:
\begin{equation*}
\cdots \rightarrow H^{r+4m+1}(C_{\varPsi};\Z/4) \xrightarrow{ \rho_2 } H^{r+4m+1}(C_{\varPsi}) \xrightarrow{Sq^1} H^{r+4m+2}(C_{\varPsi})  \to \cdots.
\end{equation*}
Since $C_{\varPsi}$ is $(r{+}4m)$-connected and $\pi_{r+4m+1}(C_{\varPsi}) \cong \Z/2$ by Lemma \ref{lem:cpsi}, the homomorphism $\rho_2 \colon H^{r+4m+1}(C_{\varPsi};\Z/4) \to H^{r+4m+1}(C_{\varPsi})$ is zero. 
Therefore,
\[ Sq^1 \colon H^{r+4m+1}(C_{\varPsi}) \to H^{r+4m+2}(C_{\varPsi}) \]
 is injective, completing the proof.
\end{proof}

\begin{lemma}\label{lem:sq3a1}
The generator $\overline{ \alpha_2 }$ can be chosen such that
\begin{align}\label{eq:sq3a1}
Sq^3 \overline{ \alpha_1} + Sq^2 \overline{ \alpha_2}  & =  Sq^1 \gamma_3.
\end{align}

\end{lemma}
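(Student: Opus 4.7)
The first observation is that $Sq^3 \overline{\alpha_1} + Sq^2 \overline{\alpha_2}$ automatically lies in $\ker \pi_\varPsi^* = (\Im \delta)^{+4}$. Indeed, by relation \eqref{eq:a1a2},
\[ \pi_\varPsi^*(Sq^3 \overline{\alpha_1} + Sq^2 \overline{\alpha_2}) = Sq^3 \alpha_1 + Sq^2 \alpha_2 = 0, \]
so exactness of \eqref{exact:Psi} places the sum in $(\Im \delta)^{+4}$. By Table~2, this group has basis $\{Sq^2 \gamma_2,\ Sq^1 \gamma_3,\ \gamma_4\}$, hence
\[ Sq^3 \overline{\alpha_1} + Sq^2 \overline{\alpha_2} = a \cdot Sq^2 \gamma_2 + b \cdot Sq^1 \gamma_3 + c \cdot \gamma_4 \]
for some $a, b, c \in \Z/2$.

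Next I would exploit the ambiguity in the lift $\overline{\alpha_2}$: since $\pi_\varPsi^*$ surjects onto $\ker \varPsi^*$ in degree $r+4m+2$, the class $\overline{\alpha_2}$ is uniquely defined only modulo $(\Im \delta)^{+2} = \Z/2 \cdot \gamma_2$. Replacing $\overline{\alpha_2}$ by $\overline{\alpha_2} + \gamma_2$ shifts $Sq^2 \overline{\alpha_2}$ by $Sq^2 \gamma_2$, so we may arrange $a = 0$. The lemma then reduces to proving $b = 1$ and $c = 0$ for this adjusted choice.

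To pin down $b$ and $c$, I would apply $Sq^1$ to both sides. Using the Adem identities $Sq^1 Sq^3 = 0$, $Sq^1 Sq^2 = Sq^3$, and $Sq^1 Sq^1 = 0$, the equation reduces to $Sq^3 \overline{\alpha_2} = c \cdot Sq^1 \gamma_4$ in $\widetilde{H}^{+5}(C_\varPsi)$. Since
\[ Sq^3 Sq^{4m+2} = \binom{4m+1}{3} Sq^{4m+5} + \binom{4m}{1} Sq^{4m+4} Sq^1 \]
vanishes mod~$2$ (both binomial coefficients being even by Lucas' theorem), the class $Sq^3 \overline{\alpha_2}$ lies in $(\Im \delta)^{+5}$, from which $c$ can be extracted. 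For $b$ I would use the Cartan-formula identity
\[ Sq^1 \gamma_3 = \delta \sigma^{r-4m}\bigl(Sq^1 l_{4m} \cdot Sq^2 l_{4m}\bigr) + \gamma_4, \]
together with the Adem-level comparison $Sq^3 \alpha_1 = Sq^2 \alpha_2 = Sq^{4m+3} Sq^1 l_r$ and the vanishing $\delta \sigma^{r-4m}(Sq^I l_{4m} \cdot Sq^I l_{4m}) = 0$ already used to derive \eqref{eq:sq1g2}--\eqref{eq:sq51g3}.

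The main obstacle is the precise bookkeeping: translating between the ``Cartan form'' classes $\delta \sigma^{r-4m}(Sq^I l_{4m} \cdot Sq^J l_{4m})$ (for $I \neq J$) and the chosen $\gamma$-basis of Table~2, and collating these with the Adem reductions of the corresponding composite operations. Once this is carried out, matching coefficients yields $a = 0$, $b = 1$, and $c = 0$, proving the claimed identity.
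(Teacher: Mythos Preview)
Your reduction to showing $b=1$ and $c=0$ (after absorbing the $Sq^2\gamma_2$ term into the choice of $\overline{\alpha_2}$) is correct, but the method you propose to pin down these coefficients does not work.

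Applying $Sq^1$ yields $Sq^3\overline{\alpha_2}=c\,Sq^1\gamma_4$, and you correctly observe that $Sq^3\alpha_2=0$ in $H^*(K(\Z/2,r))$, so $Sq^3\overline{\alpha_2}\in(\Im\delta)^{+5}$. But this gives no new information: $Sq^1\gamma_4$ already lies in $(\Im\delta)^{+5}$, and you have no independent way to compute $Sq^3\overline{\alpha_2}$ as a specific element of that group (its value depends on the unknown lift). More seriously, $Sq^1$ annihilates the $b\,Sq^1\gamma_3$ term entirely, so $b$ is invisible to this operation. Your Cartan-formula identity $Sq^1\gamma_3=\delta\sigma(Sq^1 l_{4m}\cdot Sq^2 l_{4m})+\gamma_4$ and the Adem reduction $Sq^3\alpha_1=Sq^2\alpha_2=Sq^{4m+3}Sq^1 l_r$ are both correct, but they are identities in the \emph{target} groups and do not impose any additional constraint on the unknown coefficients $b,c$; the second is just a restatement of \eqref{eq:a1a2}.

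The paper's argument applies $Sq^2$ rather than $Sq^1$. The point is that $Sq^2Sq^3=Sq^5+Sq^4Sq^1$, and Lemma~\ref{lem:sq1a1} supplies the crucial input $Sq^1\overline{\alpha_1}=\gamma_2$, so that $Sq^4Sq^1\overline{\alpha_1}=Sq^4\gamma_2$ is a \emph{known} nonzero element of $(\Im\delta)^{+6}$. Combining this with a second lifted relation, $Sq^5\overline{\alpha_1}+Sq^3Sq^1\overline{\alpha_2}\in(\Im\delta)^{+6}\cap\ker Sq^1$, and comparing against $Sq^2$ of the right-hand side in the basis of Table~2 forces $y=1$ and $z=0$. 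The missing idea in your approach is precisely this use of Lemma~\ref{lem:sq1a1} to inject genuinely new information from a lower degree.
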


\begin{proof}
By Identity \eqref{eq:a1a2}, $Sq^3 \alpha_1 + Sq^2 \alpha_2 = 0$ in $H^{\ast}(K(\Z/2, r))$,
so $Sq^3 \overline{ \alpha_1 } + Sq^2 \overline{ \alpha_2 }$ must lie in $( \Im \delta )^{+4}$, which is isomorphic to $(\Z/2)^3$ and generated by $Sq^2 \gamma_2$, $Sq^1 \gamma_3$, and $\gamma_4$ (Table $2$). 
Assume
\begin{align} \label{eq:sq3a1x}
Sq^3 \overline{ \alpha_1 } + Sq^2 \overline{ \alpha_2 } = x Sq^2 \gamma_2 + y Sq^1 \gamma_3 + z \gamma_4,
\end{align}
where $x,~y,~z \in \Z/2$.

Similarly, by Relation \eqref{eq:sq5a1} and the Adem relation $Sq^1 Sq^{2k+1} =0$, we have $Sq^5 \alpha_1 + Sq^3 Sq^1 \alpha_2  {=}  0$ in $H^{\ast}(K(\Z/2, r))$.
The element $Sq^5 \overline{ \alpha_1 } + Sq^3 Sq^1 \overline{ \alpha_2 } $ must lie both in $(\Im \delta)^{+6}$ and in the kernel of $Sq^1$,
so it is a linear combination of $Sq^4 \gamma_2 + Sq^2 Sq^1 \gamma_3$, $Sq^3 \gamma_3$, and $Sq^1 \gamma_5$.
Assume
\begin{align}\label{eq:sq5alpha}
Sq^5 \overline{ \alpha_1 } + Sq^3 Sq^1 \overline{ \alpha_2}  & =   a ( Sq^4 \gamma_2 + Sq^2 Sq^1 \gamma_3 )  + b  Sq^3 \gamma_3 + c Sq^1 \gamma_5,
\end{align}
where $a,~b,~c \in \Z/2$.

From the Adem relations $Sq^2 Sq^3 = Sq^5 + Sq^4 Sq^1$ and $Sq^2 Sq^2 = Sq^3 Sq^1$, applying $Sq^2$ to the left-hand side of \eqref{eq:sq3a1x} and using Identity \eqref{eq:sq5alpha} and Lemma \ref{lem:sq1a1} gives
\begin{align*}
Sq^2 ( Sq^3 \overline{ \alpha_1 } + Sq^2 \overline{ \alpha_2 } ) & = Sq^5 \overline{ \alpha_1 }+ Sq^3 Sq^1 \overline{ \alpha_2 } + Sq^4 Sq^1 \overline{ \alpha_1 } \\
& = (a +1) Sq^4 \gamma_2 + a Sq^2 Sq^1 \gamma_3  + b  Sq^3 \gamma_3 + c Sq^1 \gamma_5.
\end{align*}
On the other hand, applying $Sq^2$ to the right-hand side of \eqref{eq:sq3a1x} and using Equation \eqref{eq:sq1g2} yields:
\begin{align*}
Sq^2 ( x Sq^2 \gamma_2 + y Sq^1 \gamma_3 + z \gamma_4 ) & =  y Sq^2 Sq^1 \gamma_3 + z Sq^2 \gamma_4.
\end{align*}
Comparing these results and consulting Table $2$, we find that $a = y = 1$ and $b = c = z = 0$.
Thus,
\[Sq^3 \overline{ \alpha_1 } + Sq^2 \overline{ \alpha_2 } = x Sq^2 \gamma_2 + Sq^1 \gamma_3\]
for some $x \in \Z/2$.
Since $\pi_{\varPsi}^{\ast}(\gamma_2) =0$,
the proof is complete.
\end{proof}


Applying $Sq^1$, $Sq^2$ and $Sq^4$ to both sides of \eqref{eq:sq3a1} and using the Adem relations $Sq^1 Sq^{2k} = Sq^{2k+1}$, $Sq^{1} Sq^{2k+1} = 0$, $Sq^2 Sq^2 = Sq^3 Sq^1$, $Sq^2 Sq^3 = Sq^5 + Sq^4 Sq^1$, and $Sq^4 Sq^3 = Sq^5 Sq^2$, we obtain
\begin{align}
Sq^{3} \overline{ \alpha_2 } & = 0,\label{eq:sq3a2}\\
Sq^3 Sq^1 \overline{ \alpha_2 } & = Sq^5 \overline{ \alpha_1 } +  Sq^4 \gamma_2 + Sq^2 Sq^1 \gamma_3,\\
Sq^4 Sq^2 \overline{ \alpha_2 } & = Sq^5 Sq^2 \overline{\alpha_1} +  Sq^4 Sq^1 \gamma_3.
\end{align}
Using the Adem relations and \eqref{eq:sq51g3}, we further derive:
\begin{align}
Sq^4 Sq^1 \overline{ \alpha_2 } & = Sq^5 \overline{ \alpha_2 }, \\
Sq^5 Sq^1 \overline{ \alpha_2 }  & = 0,\\
Sq^5 Sq^2 \overline{ \alpha_2 } & =  Sq^5 Sq^1 \gamma_3 = 0.\label{eq:sq52a2}
\end{align}
Additionally, from \eqref{eq:a4} and \eqref{eq:sq5a4}, we have:
\begin{align}
Sq^4 \overline{ \alpha_4 }  + Sq^7 \overline{ \alpha_1 } + Sq^6 \overline{ \alpha_2 } +  \delta_m \overline{ \alpha_8 } \in (\Im \delta)^{+8}, \label{eq:sq4a4}\\
Sq^5 \overline{ \alpha_4 }  + Sq^7 \overline{ \alpha_2 } + \delta_m Sq^1 \overline{ \alpha_8 } \in (\Im \delta)^{+9},\label{eq:sq5a4b}
\end{align}

\subsection{Mod $2$ Cohomology Groups of $M\mathrm{Spin}^c(8s)$}
\label{ss:mspin}

For large $s$, let $T\colon H^{\ast}(B\mathrm{Spin}^c(8s)) \to H^{\ast}(M\mathrm{Spin}^c(8s)) $ be the Thom isomorphism, and let $U = T(1)$ be the Thom class.
%
Then $H^{\ast}(M\mathrm{Spin}^c(8s))$ is a free $H^{\ast}(B\mathrm{Spin}^c(8s);\Z/2)$-module generated by $U$.
Since
\[ H^{\ast}(B\mathrm{Spin}^c)  = \Z/2[w_i ~|~i \neq 1, 2^r+1; r\ge 1 ], \]
the definition of Stiefel-Whitney classes implies that
\begin{equation}\label{eq:sq1u}
 Sq^1 U = Sq^3 U = Sq^5 U = 0,
\end{equation}
and hence
\begin{equation}\label{eq:sq52u}
 Sq^5 Sq^2 U = Sq^4 Sq^3 U = 0.
\end{equation}
Define $ U_4 = w_2^2 U$, $U_{81} = w_2^4 U$, and  $U_{82} = w_4^2 U$.
Then:
\begin{equation}\label{eq:sq1u4}
 Sq^1 U_4 = Sq^3 U_4 = 0.
\end{equation}
Through dimension $8s+8$, $H^{\ast}(M \mathrm{Spin}^c)$ is an $\mathscr{A}$-module generated by $U$, $U_4$, $U_{81}$, and $U_{82}$, subject to relations \eqref{eq:sq1u}-\eqref{eq:sq1u4}.
The basis of $H^{\ast}(M \mathrm{Spin}^c)$ through dimension $8s+8$ is listed in Table $4$.

\vspace{5pt}
\begin{center}
\begin{tabular}{|c|c|l|} \multicolumn{3}{c} {Table $4$. Mod $2$ cohomology groups of $M\mathrm{Spin}^c(8s)$ } \\
\hline
\multicolumn{1}{|c}{\rule{0pt}{14pt} $j$} &
\multicolumn{1}{|c}{$ \widetilde{H}^{8s+j}(M\mathrm{Spin}^c(8s)) $} &
\multicolumn{1}{|c|}{basis}  \\
\hline
\rule{0pt}{14pt} $1,3,5$ & $0$ &  \\
\hline
\rule{0pt}{14pt} $0$ & $\Z/2$ & $U$ \\
\hline
\rule{0pt}{14pt} $2$ & $ \Z/2$ & $Sq^2 U$ \\
\hline
\rule{0pt}{14pt} $4$ & $ (\Z/2)^2$ & $Sq^4 U$, $U_4$ \\
\hline
\rule{0pt}{14pt} $6$ & $ (\Z/2)^3$ & $Sq^6 U$, $Sq^4 Sq^2 U$,  $Sq^2  U_4 $ \\
\hline
\rule{0pt}{14pt} $7$ & $ \Z/2$ & $Sq^7 U$ \\
\hline
\rule{0pt}{14pt} $8$ & $(\Z/2)^5$ & $Sq^8 U $, $Sq^6 Sq^2 U$, $Sq^4  U_4$, $U_{81}$, $U_{82} $ \\
\hline
\end{tabular}
\end{center}
\vspace{5pt}

\subsection{Spin$^c$ Bordism Groups of $C_{\varPsi}$}
\label{ss:borCPsi}
Recall that $\M = M\mathrm{Spin}^c(8s) \wedge C_{\varPsi}$.
To simplify notation, let $K(G, +j)$ denote the Eilenberg-MacLane space of type $(G, r {+} 8s {+} 4m {+} j)$ for $1 \le j \le 9$, and let $l_{+j}$ denote the fundamental class of  $K(\Z/2, +j)$.

Since $C_{\varPsi}$ is $(r{+}4m)$-connected and $M\mathrm{Spin}^c(8s)$ is $(8s{-}1)$-connected, $\M$ is $(r+8s +4m)$-connected.
The reduced K\"unneth formula gives
\begin{align}\label{eq:calM}
H^{r + 8s + 4m + i} (\M) = \bigoplus_{j=0}^{i-1} H^{8s+j}(M\mathrm{Spin}^c(8s)) \tensor H^{r+ 4m + i - j }(C_{\varPsi}).
\end{align}
Combining Tables $3$ and $4$ with this formula, the cohomology groups $H^{r + 8s + 4m + i} (\M)$ for $i \le 9$ can be determined.

We now construct maps from $\M$ to Eilenberg-MacLane spaces  $K(G_i, +i)$ for $1 \le i \le 8$ to determine $\Omega_{r+4m+7}^{\mathrm{Spin}^c} (C_{\varPsi}) \cong \pi_{r+8s+4m+7}(\M)$, where the groups $G_i$ for $1 \le i \le 8$ are:
\begin{center}
\begin{tabular}{|c|c|c|c|c|c|c|c|c|} \multicolumn{9}{c} {} \\
\hline
\multicolumn{1}{|c}{\rule{0pt}{14pt} $i$} &
\multicolumn{1}{|c}{$ 1$} &
\multicolumn{1}{|c}{$ 2$} &
\multicolumn{1}{|c}{$ 3$} &
\multicolumn{1}{|c}{$ 4$} &
\multicolumn{1}{|c}{$ 5$} &
\multicolumn{1}{|c}{$ 6$} &
\multicolumn{1}{|c}{$ 7$} &
\multicolumn{1}{|c|}{$ 8$}  \\
\hline
\rule{0pt}{14pt} $G_i$ & $\Z/2$ & $\Z/2$ & $(\Z/2)^2$ &  $(\Z/2)^2$ & $(\Z/2)^4$ & $(\Z/2)^5$ & $(\Z/2)^8$ & $(\Z/2)^9$ \\
\hline
\end{tabular}
\end{center}
\vspace{5pt}

Define the following maps:
\begin{enumerate}
\item[(1)] $f_1 \colon  \M \to K(\Z/2, +1)$ satisfying
\[ f_1^{\ast} ( l_{+1} )  = U  \cdot \overline{\alpha_1},\]

\item[(2)] $f_2 \colon  \M \to K(\Z/2, +2)$ satisfying 
\[ f_2^{\ast} ( l_{+2})  = U  \cdot \overline{\alpha_2}.\]

\item[(3)] $f_3 = f_{31} \times f_{32} \colon \M \to K((\Z/2)^2, +3)$ the product map of $f_{31}$ and $f_{32}$,
where 
\[ f_{3j} \colon  \M \to K(\Z/2, +3),~j=1,2,\]
are the maps satisfying
\begin{align*}
f_{31}^{\ast} ( l_{+3} )  & = Sq^2 U  \cdot \overline{\alpha_1}, \\
f_{32}^{\ast} ( l_{+3} )  & = U  \cdot \gamma_3,
\end{align*}

\item[(4)] $f_4 = f_{41}\times f_{42} \colon \M \to K((\Z/2)^2, +4)$ the product map of $f_{41}$ and $f_{42}$,
where 
\[ f_{4j} \colon  \M \to K(\Z/2, +4), ~j=1,2, \]
are the maps satisfying
\begin{align*}
f_{41}^{\ast} ( l_{+4} ) & = U  \cdot \overline{\alpha_4}, \\
f_{42}^{\ast} ( l_{+4} ) & = U  \cdot \gamma_4,
\end{align*}

\item[(5)] $f_5 = \prod_{j=1}^{4} f_{5j} \colon \M \to K((\Z/2)^4, +5)$ the product map of $f_{5j}$, $1\le j\le 4$,
where
\[ f_{5j} \colon  \M \to K(\Z/2, +5), ~1\le j \le 4,\]
are the maps satisfying
\begin{align*}
f_{51}^{\ast} (l_{+5}) & = Sq^4 U  \cdot \overline{\alpha_1}; \\
f_{52}^{\ast} (l_{+5}) & = U_4  \cdot \overline{\alpha_1}; \\
f_{53}^{\ast} (l_{+5}) & = U  \cdot \gamma_5; \\
f_{54}^{\ast} (l_{+5}) & = Sq^2 U  \cdot \gamma_3.
\end{align*}

\item[(6)] $f_6 = \prod_{j=1}^{5} f_{6j} \colon \M \to K((\Z/2)^5, +6)$ the product map of $f_{6j}$, $1\le j\le 5$,
where
\[ f_{6j} \colon  \M \to K(\Z/2, +6), ~1\le j \le 5,\]
are the maps satisfying
\begin{align*}
f_{61}^{\ast} ( l_{+6} ) & = Sq^4 U  \cdot \overline{\alpha_2} ; \\
f_{62}^{\ast} ( l_{+6} ) & = U_4  \cdot \overline{\alpha_2}; \\ 
f_{63}^{\ast} ( l_{+6} ) & = Sq^2 U  \cdot \overline{\alpha_4}; \\
f_{64}^{\ast} ( l_{+6} ) & = U  \cdot \gamma_6; \\
f_{65}^{\ast} ( l_{+6} ) & = Sq^2 U  \cdot \gamma_4.
\end{align*}

\item[(7)] $f_7 = \prod_{j=1}^{8} f_{7j} \colon \M \to K((\Z/2)^8, +7)$ the product map of $f_{7j}$, $1\le j\le 8$,
where
\[ f_{7j} \colon  \M \to K(\Z/2, +7), ~1\le j \le 8,\]
are the maps satisfying
\begin{align*}
f_{71}^{\ast} ( l_{+7} ) & = U  \cdot Sq^6 \overline{\alpha_1} ; \\
f_{72}^{\ast} ( l_{+7} ) & = U  \cdot Sq^4 Sq^2 \overline{\alpha_1}; \\ 
f_{73}^{\ast} ( l_{+7} ) & = U_4  \cdot  Sq^2 \overline{\alpha_1}; \\
f_{74}^{\ast} ( l_{+7} ) & = U  \cdot Sq^4 \gamma_3; \\
f_{75}^{\ast} ( l_{+7} ) & = U  \cdot Sq^2 \gamma_5; \\
f_{76}^{\ast} ( l_{+7} ) & = U  \cdot \gamma_7; \\
f_{77}^{\ast} ( l_{+7} ) & = U  \cdot \gamma_{71}; \\
f_{78}^{\ast} ( l_{+7} ) & = U_4  \cdot \gamma_3.
\end{align*}

\item[(8)] $f_8 = \prod_{j=1}^{9} f_{8j} \colon \M \to K((\Z/2)^9, +8)$ the product map of $f_{8j}$, $1\le j\le 9$,
where
\[ f_{8j} \colon  \M \to K(\Z/2, +8), ~1\le j \le 9,\]
are the maps satisfying
\begin{align*}
f_{81}^{\ast} ( l_{+8} ) & = U  \cdot Sq^6 \overline{\alpha_2} ; \\
f_{82}^{\ast} ( l_{+8} ) & = U  \cdot  \overline{\alpha_8}; \\ 
f_{83}^{\ast} ( l_{+8} ) & = U_4  \cdot  \overline{\alpha_4}; \\
f_{84}^{\ast} ( l_{+8} ) & = U  \cdot Sq^4 Sq^2 \gamma_2; \\
f_{85}^{\ast} ( l_{+8} ) & = U  \cdot Sq^4 \gamma_4; \\
f_{86}^{\ast} ( l_{+8} ) & = U  \cdot Sq^2 \gamma_6; \\
f_{87}^{\ast} ( l_{+8} ) & = U  \cdot \gamma_{8}; \\
f_{88}^{\ast} ( l_{+8} ) & = U  \cdot \gamma_{81}; \\
f_{89}^{\ast} ( l_{+8} ) & = U_4  \cdot \gamma_4.
\end{align*}
\end{enumerate}
Now, let
\begin{align*}
f = \prod_{i=1}^{8}f_{i} \colon \M \to \prod_{i=1}^{8} K(G_i, +i)
\end{align*}
be the product map of $f_{i}$, $1\le i\le 8$.
Let $K = \prod_{i=1}^{8}K(G_i, +i)$, and let $F$ be the fiber of $f$, giving the fibration
\begin{align*}
F \hookrightarrow \M \xrightarrow{f} K.
\end{align*}

\begin{lemma} \label{lem:Fco}
$F$ is $(r+8s+4m-1)$-connected.
\end{lemma}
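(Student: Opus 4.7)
The plan is to prove this connectivity statement by a direct application of the long exact sequence of the fibration $F \hookrightarrow \mathcal{M} \xrightarrow{f} K$, using the known connectivities of $\mathcal{M}$ and $K$.

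First, I would establish the connectivity of $\mathcal{M} = M\mathrm{Spin}^c(8s) \wedge C_{\varPsi}$. Since $M\mathrm{Spin}^c(8s)$ is the Thom space of a rank-$8s$ bundle, it is $(8s-1)$-connected. By Lemma \ref{lem:cpsi}, $C_{\varPsi}$ is $(r+4m)$-connected. The standard connectivity formula for the smash product of pointed CW complexes (if $X$ is $p$-connected and $Y$ is $q$-connected then $X \wedge Y$ is $(p+q+1)$-connected, which follows from the cellular structure of the smash product combined with the Hurewicz theorem) then gives that $\mathcal{M}$ is $(r+8s+4m)$-connected.

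Next, I would observe the connectivity of the target $K = \prod_{i=1}^{8} K(G_i, r+8s+4m+i)$. Each factor $K(G_i, r+8s+4m+i)$ is $(r+8s+4m+i-1)$-connected. The smallest index $i=1$ yields a factor that is $(r+8s+4m)$-connected, and so the product $K$ is also $(r+8s+4m)$-connected.

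Finally, I would conclude from the long exact sequence of homotopy groups
\[ \cdots \to \pi_{j+1}(K) \to \pi_{j}(F) \to \pi_{j}(\mathcal{M}) \to \pi_{j}(K) \to \cdots \]
that $\pi_j(F) = 0$ for $j \le r+8s+4m-1$: indeed, in this range $\pi_j(\mathcal{M}) = 0$ since $j \le r+8s+4m$, and $\pi_{j+1}(K) = 0$ since $j+1 \le r+8s+4m$. Hence $F$ is $(r+8s+4m-1)$-connected.

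There is no substantive obstacle; the statement is essentially a bookkeeping consequence of the connectivity of a smash product and of a product of Eilenberg--MacLane spaces. The only point to flag is why the bound $r+8s+4m-1$ rather than $r+8s+4m$: the connecting map $\pi_{r+8s+4m+1}(K) \to \pi_{r+8s+4m}(F)$ can be nontrivial because $\pi_{r+8s+4m+1}(K)$ contains the factor $G_1 = \Z/2$ coming from $K(\Z/2, r+8s+4m+1)$, and so we cannot expect to gain one more degree of connectivity on $F$.
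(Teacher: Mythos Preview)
Your proof is correct and follows essentially the same approach as the paper: the paper's one-line proof simply records that both $\mathcal{M}$ and $K$ are $(r+8s+4m)$-connected (the connectivity of $\mathcal{M}$ having been stated just before the lemma), leaving the long exact sequence argument implicit, while you spell out these same steps in full detail.
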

\begin{proof}
This follows because both $\M$ and $K$ are $(r+8s +4m)$-connected.
\end{proof} 

Let $p_i \colon K \to K(G_i, +i)$ the projection such that $p_i \circ f = f_i$ for $1 \le i \le 8$.
For $3 \le i \le 8$, let $p_{ij} \colon K(G_i, +i) \to K(\Z/2, +i)$ be the map such that $p_{ij} \circ f_{i} = f_{ij}$ for suitable $j$.
Denote $p_{1}^{\ast}(l_{+1})$ and $p_{2}^{\ast}(l_{+2})$ simply as $l_{+1}$ and $l_{+2}$.
Define: 
\begin{align*}
l_{+3,1} = p_3^{\ast} \circ p_{31}^{\ast} (l_{+3}),\\
l_{+3,2} = p_3^{\ast} \circ p_{32}^{\ast} (l_{+3}),\\
l_{+5,4} = p_5^{\ast} \circ p_{54}^{\ast} (l_{+5}).
\end{align*}
Let $\xi \in H^{r+8s+4m+6}(K)$ be defined as
\begin{align*}
\xi := & ~ Sq^5 l_{+1} + Sq^4 Sq^1 l_{+1} + Sq^3 Sq^1 l_{+2} + Sq^3 l_{+3, 1} + Sq^2 Sq^1 l_{+3, 1} + Sq^2 Sq^1 l_{+3, 2} + Sq^1 l_{+5, 4}.
\end{align*}

\begin{lemma}\label{lem:fxi}
Suppose $m \ge 2$. 
For large $r$ and $s$, the induced homomorphism
\[ f^{\ast} \colon H^{r+8s+4m+j}(K) \to H^{r+8s+4m+j}(\mathcal{M}) \]
is an epimorphism for $j \le 8$.
Through dimension $r + 8s + 4m +9$ the kernel of $f^{\ast}$ is generated over the Steenrod algebra $\mathscr{A}$ by $\xi$.
\end{lemma}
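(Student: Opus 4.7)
The plan is to analyze $f^{\ast}$ dimension-by-dimension through degree $r{+}8s{+}4m{+}9$, using that both $H^{\ast}(K)$ and $H^{\ast}(\M)$ admit explicit bases in this range. For $H^{\ast}(K)$, Lemma \ref{lem:mac} gives a polynomial basis of admissible Steenrod monomials applied to the fundamental classes of each factor $K(G_{i}, +i)$, and for $H^{\ast}(\M)$ the K\"unneth formula \eqref{eq:calM} combined with Tables $3$ and $4$ gives explicit generators and relations. The maps $f_{i}$ and $f_{ij}$ were defined precisely so that their action on fundamental classes recovers the chosen generators of $\widetilde{H}^{\ast}(\M)$ listed in Subsections \ref{ss:cofi} and \ref{ss:mspin}.

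Surjectivity of $f^{\ast}$ in degrees $\le r{+}8s{+}4m{+}8$ is verified by direct inspection. For each basis element of $\widetilde{H}^{r+8s+4m+j}(\M)$ with $j \le 8$, one identifies it as $\Sq^{I}(U \cdot x)$ (or $\Sq^{I}(U_{4} \cdot x)$, etc.) for an appropriate generator $x$, expands via the Cartan formula $\Sq^{i}(U \cdot x) = \sum_{k} \Sq^{k}U \cdot \Sq^{i-k}x$, and simplifies using $\Sq^{1}U = \Sq^{3}U = \Sq^{5}U = 0$ from \eqref{eq:sq1u} together with $\Sq^{1}U_{4} = \Sq^{3}U_{4} = 0$ from \eqref{eq:sq1u4}. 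The K\"unneth decomposition then shows that the resulting admissible Steenrod monomial lifts to $H^{\ast}(K)$ under $f^{\ast}$, establishing surjectivity line-by-line against the entries of Table $4$ tensored with Table $3$.

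For the kernel, I would first verify $f^{\ast}(\xi) = 0$ by expanding each of the seven terms of $\xi$ via the Cartan formula and substituting Lemma \ref{lem:sq1a1} ($\Sq^{1}\overline{\alpha_{1}} = \gamma_{2}$), Lemma \ref{lem:sq3a1} ($\Sq^{3}\overline{\alpha_{1}} + \Sq^{2}\overline{\alpha_{2}} = \Sq^{1}\gamma_{3}$), equation \eqref{eq:sq5alpha}, and the $U$-relations \eqref{eq:sq1u}; the contributions cancel in pairs in degree $+6$, so $\xi \in \ker f^{\ast}$. To show that $\xi$ generates $\ker f^{\ast}$ through degree $+9$ over $\mathscr{A}$, I would compare ranks: in each degree $j \le 9$ the difference $\dim H^{+j}(K) - \dim H^{+j}(\M)$ equals $\dim (\ker f^{\ast})^{+j}$, and this must match the dimension of the $\mathscr{A}$-submodule generated by $\xi$, whose basis through degree $+9$ consists of $\xi,\ \Sq^{1}\xi,\ \Sq^{2}\xi,\ \Sq^{3}\xi,\ \Sq^{2}\Sq^{1}\xi$ (after applying Adem relations and $\Sq^{1}\Sq^{1}=0$). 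The main obstacle I anticipate is the combinatorial bookkeeping in this last step: accurately enumerating the cohomology of the eight-factor product $K$ in each degree via Lemma \ref{lem:mac}, ruling out any further independent $\mathscr{A}$-generator of $\ker f^{\ast}$ below degree $+10$, and correctly accounting for relations forced by \eqref{eq:a1}--\eqref{eq:sq5a4b} and \eqref{eq:sq1g2}--\eqref{eq:sq51g3} when listing admissible lifts.
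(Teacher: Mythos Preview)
Your approach is correct in substance and differs from the paper's mainly in organization. The paper frames the computation via the Serre long exact cohomology sequence for the fibration $F \hookrightarrow \mathcal{M} \xrightarrow{f} K$ (Lemma \ref{lem:serre}): since $F$ is $(r{+}8s{+}4m{-}1)$-connected and $K$ is $(r{+}8s{+}4m)$-connected, one obtains an exact sequence running well past degree $+9$, from which $\ker f^{\ast}$ in each degree is identified with the image of the transgression $\tau$, and the cohomology of $F$ is built up inductively. Your direct surjectivity check plus rank-counting is equivalent in computational content but less self-contained at $j=9$: the equality $\dim(\ker f^{\ast})^{+j} = \dim H^{+j}(K) - \dim H^{+j}(\mathcal{M})$ presupposes that $f^{\ast}$ is surjective in degree $j$, which you have only argued for $j \le 8$. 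The Serre framework avoids this by computing $H^{+j}(F)$ and reading $\ker f^{\ast}$ off as $\mathrm{im}\,\tau$ without needing surjectivity separately; in your setup you would need to either extend the surjectivity verification to $j=9$ or argue directly that $Sq^{3}\xi$ and $Sq^{2}Sq^{1}\xi$ span the kernel there. Also, when verifying $f^{\ast}(\xi)=0$ you should cite the determined form of the relation (i.e., with $a=1$, $b=c=0$) rather than \eqref{eq:sq5alpha} itself, which as stated still has undetermined coefficients.
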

\begin{proof}
Since $F$ is $(r+8s+4m-1)$-connected (Lemma \ref{lem:Fco}) and $K$ is $(r+8s+4m)$-connected, the Serre long exact cohomology sequence (Lemma \ref{lem:serre}) gives:
\begin{align*}
H^1(\mathcal{M}) \to \cdots \to H^j(F) \xrightarrow{\tau} H^{j+1}(K) \xrightarrow{f^{\ast}} H^{j+1}(\mathcal{M}) \to H^{j+1}(F) \xrightarrow{\tau} \cdots  \to H^{2r+16s+8m}(F),
\end{align*}
where $\tau$ is the transgression.
The basis of $H^{\ast}(\mathcal{M})$ through dimension $r+8s+4m+9$ is determined by \eqref{eq:calM} and Tables $3$ and $4$,
and the $\mathscr{A}$-module relations it satisfies are given by Tables $3$ and $4$, Lemmas \ref{lem:sq1a1} and \ref{lem:sq3a1}, and Identities \eqref{eq:sq1g2}-\eqref{eq:sq51g3}, \eqref{eq:sq3a2}-\eqref{eq:sq52a2}, \eqref{eq:sq1u}-\eqref{eq:sq1u4}, and Relations \eqref{eq:sq4a4} and \eqref{eq:sq5a4b}.
Combining this with Lemma \ref{lem:mac} and the construction of $f$, the result follows from a straightforward (though tedious) calculation of $f^{\ast}$ using the Serre long exact cohomology sequence above. 
\end{proof}

%
%

\begin{theorem}\label{thm:borCPsi}
Suppose $m \ge 2$. 
For large $r$ and $s$, the induced homomorphism
\[ f_{\ast} \colon \pi_{r+8s+4m+j}(\mathcal{M}) \to \pi_{r+8s+4m+j}(K) \]
is an isomorphism for $j \le 4$ and $j = 7$.
\end{theorem}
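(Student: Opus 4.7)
The plan is to analyze the fibration $F \hookrightarrow \mathcal{M} \xrightarrow{f} K$ through its homotopy long exact sequence, after first pinning down the low-degree mod $2$ cohomology of $F$ by combining Lemmas \ref{lem:Fco}, \ref{lem:fxi}, and the Serre exact sequence of Lemma \ref{lem:serre}. The information from Lemma \ref{lem:fxi} (surjectivity of $f^{\ast}$ through degree $r{+}8s{+}4m{+}8$, and kernel generated over $\mathscr{A}$ by $\xi$ in degree $r{+}8s{+}4m{+}6$) implies that the map $H^{j}(\mathcal{M}) \to H^{j}(F)$ is zero through degree $r{+}8s{+}4m{+}8$, so the transgression $\tau \colon H^{j}(F) \to H^{j+1}(K)$ is injective with image $\ker f^{\ast}$. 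Reading off:
\begin{align*}
H^{j}(F;\Z/2) &= 0 \quad \text{for } j \le r{+}8s{+}4m{+}4,\\
H^{r+8s+4m+5}(F;\Z/2) &\cong \Z/2 \langle \tilde{\xi}\rangle, \\
H^{r+8s+4m+6}(F;\Z/2) &\cong \Z/2 \langle Sq^{1}\tilde{\xi}\rangle, \\
H^{r+8s+4m+7}(F;\Z/2) &\cong \Z/2 \langle Sq^{2}\tilde{\xi}\rangle,
\end{align*}
where $\tilde{\xi}$ is the transgression preimage of $\xi$ and we have used that $Sq^{i}$ commutes with $\tau$.

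Next, I would verify that all homotopy groups in the relevant range are $2$-torsion. Since $H^{\ast}(C_{\varPsi};\Q) = 0$ (both Eilenberg-MacLane spaces in the cofibration \eqref{cof:Psi} are rationally trivial), the Atiyah-Hirzebruch spectral sequence gives that $\pi_{\ast}(\mathcal{M}) = \Omega^{\mathrm{Spin}^{c}}_{\ast - 8s}(C_{\varPsi})$ is $2$-torsion in the relevant range, and $\pi_{\ast}(K)$ is $2$-torsion since $K$ is a product of $K(\Z/2, n)$'s. Therefore $\pi_{\ast}(F)$ is $2$-torsion as well, and the mod $\mathcal{C}$ Hurewicz theorem (for $\mathcal{C}$ the Serre class of finite odd-torsion abelian groups) applies: the vanishing of $H^{j}(F;\Z/2)$ for $j \le r{+}8s{+}4m{+}4$ gives $\pi_{j}(F) = 0$ for $j \le r{+}8s{+}4m{+}4$. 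The homotopy long exact sequence of the fibration then yields the isomorphism $f_{\ast}$ for $j \le 4$ directly.

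For the case $j = 7$, the observation is that the computed cohomology of $F$ through degree $r{+}8s{+}4m{+}7$ is exactly that of $K(\Z/2, r{+}8s{+}4m{+}5)$ in the same range. The map $F \to K(\Z/2, r{+}8s{+}4m{+}5)$ classifying $\tilde{\xi}$ is therefore a mod $2$ cohomology isomorphism in this range, and by the $2$-local Whitehead theorem (applicable since both spaces are simply connected and their homotopy groups are $2$-torsion in the range), it induces an isomorphism on $\pi_{k}$ for $k \le r{+}8s{+}4m{+}7$. Consequently,
\[
\pi_{r+8s+4m+5}(F) \cong \Z/2, \qquad \pi_{r+8s+4m+6}(F) = \pi_{r+8s+4m+7}(F) = 0.
\]
Feeding this into the homotopy long exact sequence
\[
\pi_{r+8s+4m+7}(F) \to \pi_{r+8s+4m+7}(\mathcal{M}) \xrightarrow{f_{\ast}} \pi_{r+8s+4m+7}(K) \to \pi_{r+8s+4m+6}(F),
\]
both flanking groups vanish, so $f_{\ast}$ is an isomorphism at $j = 7$ as claimed.

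The main obstacle I anticipate is the Postnikov / Whitehead-style step that upgrades the cohomological match between $F$ and $K(\Z/2, r{+}8s{+}4m{+}5)$ in degrees $r{+}8s{+}4m{+}5, r{+}8s{+}4m{+}6, r{+}8s{+}4m{+}7$ to an identification of homotopy groups; the subtlety is ensuring no higher $k$-invariant of $F$ can create extra homotopy in degrees $r{+}8s{+}4m{+}6$ or $r{+}8s{+}4m{+}7$. Once the $2$-torsion property is secured, this follows from the $2$-local Whitehead theorem applied to the classifying map of $\tilde{\xi}$, and the bookkeeping through the Serre exact sequence is routine but must be done carefully because a handful of relations in Tables $1$--$3$ (and the relations coming from Lemmas \ref{lem:sq1a1}, \ref{lem:sq3a1} and \eqref{eq:sq4a4}--\eqref{eq:sq5a4b}) are used precisely to ensure that no additional generator appears in $\ker f^{\ast}$ below degree $r{+}8s{+}4m{+}9$ besides $\xi$ and its $\mathscr{A}$-multiples.
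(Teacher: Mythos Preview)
Your approach is essentially the paper's: analyze $H^{*}(F;\Z/2)$ via the Serre exact sequence using Lemma~\ref{lem:fxi}, compare $F$ to $K(\Z/2,+5)$ via the map $e$ classifying $\tilde\xi$, and read off $\pi_{*}(F)$ to feed into the homotopy exact sequence of $F\hookrightarrow\mathcal{M}\to K$. The paper carries this out by taking the fibre $\hat F$ of $e$ and showing $H^{+j}(\hat F;\Z/2)=0$ for $j\le 7$, which is exactly the ``$2$-local Whitehead'' step you invoke.

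There is, however, a genuine off-by-one gap in your range. Knowing that $e^{*}$ is a mod~$2$ cohomology isomorphism only through degree $r{+}8s{+}4m{+}7$ does \emph{not} yield $\pi_{r+8s+4m+7}(F)=0$: the (mod~$\mathcal{C}$) Whitehead theorem gives an isomorphism on $\pi_{k}$ only for $k<N$ and a surjection at $k=N$ when homology agrees through degree $N$, and here the target group $\pi_{+7}(K(\Z/2,+5))$ is zero, so surjectivity is vacuous. Equivalently, with only $e^{*}$ iso for $j\le 7$ one gets $H^{+j}(\hat F)=0$ merely for $j\le 6$, leaving $\pi_{+7}(\hat F)\cong\pi_{+7}(F)$ possibly nonzero. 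The fix is already available from Lemma~\ref{lem:fxi}: the kernel of $f^{*}$ is $\mathscr{A}$-generated by $\xi$ through degree $r{+}8s{+}4m{+}9$, so $H^{+8}(F;\Z/2)\cong(\ker f^{*})^{+9}$ is spanned by $Sq^{3}\xi$ and $Sq^{2}Sq^{1}\xi$, matching $H^{+8}(K(\Z/2,+5))$. Thus $e^{*}$ is an isomorphism for $j\le 8$ (this is precisely what the paper asserts, citing Lemmas~\ref{lem:mac} and~\ref{lem:fxi}), whence $H^{+j}(\hat F)=0$ for $j\le 7$, $\hat F$ is $(r{+}8s{+}4m{+}7)$-connected, and $\pi_{+7}(F)=0$ as required. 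One should also check that $Sq^{1}\xi,\,Sq^{2}\xi,\,Sq^{3}\xi,\,Sq^{2}Sq^{1}\xi$ are linearly independent in $H^{*}(K)$ so that $e^{*}$ is injective, not just surjective, in these degrees; this is a direct computation in the polynomial algebra $H^{*}(K)$.
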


\begin{proof}
Let $\ell_{+5} \in H^{r+8s+4m+5}(F)$ be the element such that $\tau(\ell_{+5}) = \xi$, and let 
\[ e \colon F \to K(\Z/2, +5)\]
be the map satisfying $e^{\ast} ( l_{+5} ) = \ell_{+5}$.
By Lemmas \ref{lem:mac} and \ref{lem:fxi}, the induced homomorphism
\[ e^{\ast} \colon H^{r+8s+4m+j}( K(\Z/2, +5) )  \to H^{r+8s+4m+j} (F) \]
is an isomorphism for $j \le 8$.

Let $\hat{F}$ be the fiber of $e$, giving the fibration
\[ \hat{F} \hookrightarrow F \xrightarrow{e} K(\Z/2, +5). \] 
The homotopy groups of $\mathcal{M}$, $F$ and $\hat{F}$ are all purely $2$-primary.
Since $F$ is $(r+8s+4m-1)$-connected by Lemma \ref{lem:Fco} and $K(\Z/2, +5)$ is $(r+8s+4m+4)$-connected,
$\hat{F}$ is $(r+8s+4m-2)$-connected.
From the Serre long exact cohomology sequence for this fibration, we find:
\[ H^{r+8s+4m+j}(\hat{F}) = 0 \text{~for~}  j \le 7.\] 
Thus, $\hat{F}$ is $(r+8s+4m+7)$-connected and
\[e_{\ast} \colon \pi_{i}(F) \to \pi_{i}(K(\Z/2, +5))\]
is an isomorphism for $i \le r+8s+4m+7$. 
The theorem now follows by analyzing the long exact sequence of homotopy groups for the fibration $F \hookrightarrow \mathcal{M} \to K$.
\end{proof}

\subsection{Proof of Theorem \ref{thm:main}}
\label{ss:pfmain}

We now prove Theorem \ref{thm:main} using the results from Subsections \ref{ss:idea}-\ref{ss:borCPsi}.

For any positive integer $m$ and large $r$ and $s$, consider the following diagram:
\begin{equation}\label{diag:pf}
\begin{split}
\xymatrix{
 \wt{\Omega}^{\mathrm{Spin}^c}_{8m+7}( K(\Z/2, 4m)) \ar[r]^-{\varPsi_{\ast}} & \wt{\Omega}^{\mathrm{Spin}^c}_{r+4m+7}(K(\Z/2, r)) \ar[r]^-{\pi_{\varPsi\ast}} \ar[d]_{\mathcal{P}_2}^{\cong} &  \wt{\Omega}^{\mathrm{Spin}^c}_{r+4m+7}(C_{\varPsi})  \ar[d]_-{PT}^{\cong}  \\
 & H_{4m+7}(B\mathrm{Spin}^c) & \pi_{r+8s+4m+7}(\mathcal{M}) \ar[d]_{f_{7\ast}}^{\cong} \\
 & & \pi_{r+8s+4m+7}(K((\Z/2)^8, +7)).
}
\end{split}
\end{equation}
Here, the horizontal sequence is exact, $\mathcal{P}_2$ is the isomorphism from Lemma \ref{lem:oh2}, $PT$ is the Pontrjagin-Thom isomorphism, and $f_{7\ast}$ is induced by the map $f_{7}$ constructed in Subsection \ref{ss:borCPsi}.
By Theorem \ref{thm:borCPsi}, $f_{7\ast}$ is an isomorphism.

To prove Theorem \ref{thm:main}, we need the following lemmas.

For any $y \in H^i(B\mathrm{Spin}^c(8s))$ and $z \in H^{r+4m+7-i}(C_{\varPsi})$, let
\[ f_{yz} \colon M\mathrm{Spin}^c(8s) \wedge C_{\varPsi} \to K(\Z/2, +7)\]
be the map satisfying 
\[ f_{yz}^{\ast} ( l_{+7} ) =  U \cdot y \cdot z \in H^{r+8s+4m+7}(M\mathrm{Spin}^c(8s) \wedge C_{\varPsi}),\]
where $U$ is the Thom class.

For any $[N, f] \in \wt{\Omega}_{r+4m+7}^{\mathrm{Spin}^c}(K(\Z/2, r))$,
let 
\[ \phi \colon S^{r+8s+4m+7} \to K(\Z/2, +7) \]
represent the element 
\[ f_{yz \ast} \circ PT \circ \pi_{\varPsi \ast} ( [N, f]) \in \pi_{r+8s+4m+7} ( K(\Z/2, +7) ), \]
and let $[S]$ be the fundamental class of $S^{r+8s+4m+7}$.
\begin{lemma}\label{lem:nf}
The element $f_{yz \ast} \circ PT \circ \pi_{\varPsi \ast} ( [N, f])$ is detected by $\langle \phi^{\ast} ( l_{+7} ), [S] \rangle$ and
\[ \langle \phi^{\ast} ( l_{+7} ), [S] \rangle = \langle \tau_{N}^{\ast} (y) \cdot f^{\ast} ( \pi_{\varPsi}^{\ast} (z) ), [N] \rangle.\]
\qed
\end{lemma}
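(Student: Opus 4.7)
The plan is to unwind each map in the composition defining $\phi$ and then invoke the standard evaluation formula for the Pontrjagin-Thom isomorphism.

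First, I would dispose of the ``detection'' part. Since $K(\Z/2, +7)$ is an Eilenberg-MacLane space at exactly the dimension of the source sphere, the Hurewicz theorem together with the universal coefficient theorem gives $\pi_{r+8s+4m+7}(K(\Z/2, +7)) \cong \Z/2$, with this isomorphism realized by $[\phi] \mapsto \langle \phi^{\ast}(l_{+7}), [S] \rangle$. Hence the element $f_{yz\ast} \circ PT \circ \pi_{\varPsi\ast}([N,f])$ is completely determined by this Kronecker pairing, and the lemma reduces to matching the pairing with the claimed cohomological expression on $N$.

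Next, by naturality $\phi = f_{yz} \circ \psi$ for some representative $\psi\colon S^{r+8s+4m+7} \to \M$ of the class $PT \circ \pi_{\varPsi\ast}([N, f]) \in \pi_{r+8s+4m+7}(\M)$, so
\[ \phi^{\ast}(l_{+7}) = \psi^{\ast}\bigl(f_{yz}^{\ast}(l_{+7})\bigr) = \psi^{\ast}(U \cdot y \cdot z), \]
and the whole problem is to compute $\langle \psi^{\ast}(U \cdot y \cdot z), [S] \rangle$.

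For the main step, I would write $\psi$ via its classical Pontrjagin-Thom model, namely the composition
\[ S^{r+8s+4m+7} \xrightarrow{c} T(\nu_N) \xrightarrow{\Delta} T(\nu_N) \wedge N_+ \xrightarrow{\tau_N \wedge (\pi_{\varPsi}\circ f)} M\mathrm{Spin}^c(8s) \wedge C_{\varPsi}, \]
where $c$ is the Thom collapse for an embedding $N \hookrightarrow S^{r+8s+4m+7}$, $\Delta$ is the diagonal, and $\tau_N$ classifies the stable spin$^c$ normal (equivalently, tangent) bundle of $N$. Combining the defining property that $c_{\ast}[S]$ is the Thom fundamental class of $\nu_N$ with the K\"unneth decomposition of $U \cdot y \cdot z$ and the naturality of the Thom isomorphism yields
\[ \langle \psi^{\ast}(U \cdot y \cdot z), [S] \rangle = \langle \tau_N^{\ast}(y) \cdot (\pi_{\varPsi}\circ f)^{\ast}(z), [N] \rangle = \langle \tau_N^{\ast}(y) \cdot f^{\ast}(\pi_{\varPsi}^{\ast}(z)), [N] \rangle, \]
which is exactly the asserted identity. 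This is the same bookkeeping already carried out in Section \ref{s:exis} for the homomorphism $\mathcal{P}$ (see equation \eqref{eq:Psi}), so no new geometric input is required; the one mildly delicate point is tracking the interaction of the Thom class, the cap product, and the external product through the diagonal, but this is routine from the established Pontrjagin-Thom machinery, and I do not expect it to pose a real obstacle.
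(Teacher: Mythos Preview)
Your proposal is correct; the paper itself supplies no proof at all (the lemma is marked with \qed\ and left as a standard fact), so your unwinding of the Pontrjagin--Thom construction is exactly the routine verification the author is tacitly invoking. There is nothing to compare against --- you have simply written out what the paper deems obvious.
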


Regarding the Wu class, we have:
\begin{lemma}\label{lem:v2k}
For any $n$-dimensional spin$^c$ manifold $N$ and any nonnegative integer $k$,
\[ Sq^1 v_{2k}(N) = 0.\]
\end{lemma}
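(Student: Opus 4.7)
The plan is to reduce the assertion to the universal identity $Sq^1 v_{2k} = 0$ in $H^{2k+1}(B\mathrm{Spin}^c; \Z/2)$, whose pullback along the classifying map $\tau_N \colon N \to B\mathrm{Spin}^c$ then yields the lemma. First, I would recall that since $N$ is oriented, $v_1(N) = w_1(N) = 0$, whence all odd-indexed Wu classes vanish: $v_{2j+1}(N) = 0$ for every $j \ge 0$, by combining Wu's defining property $\langle v_i \cdot x, [N]\rangle = \langle Sq^i x, [N]\rangle$ with the Steenrod algebra identity $Sq^{2j+1} = Sq^1 Sq^{2j}$. Applying Wu's formula $w_i = \sum_j Sq^j v_{i-j}$ at $i = 2k+1$, only odd $j$ contribute, and those with $j > 2k+1-j$ vanish for dimensional reasons, yielding
\[ Sq^1 v_{2k} = w_{2k+1} + \sum_{\substack{3 \le j \le k \\ j \text{ odd}}} Sq^j v_{2k+1-j}. \]

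Next, I would exploit the polynomial structure $H^*(B\mathrm{Spin}^c; \Z/2) = \Z/2[w_i : i \neq 1, 2^r+1,\, r \ge 1]$ stated in Subsection \ref{ss:mspin}. The spin$^c$ condition gives $w_3 = Sq^1 w_2 = 0$ (since $w_2$ admits an integral lift $c$), and the polynomial structure forces $w_5 = 0$ by a degree count (no degree-$5$ monomial in $w_2, w_4$). The higher excluded classes $w_{2^r+1}$ for $r \ge 3$ equal specific polynomials in the remaining generators (for instance, $w_9 = w_2 w_7$), which may be derived from the Serre spectral sequence of the fibration $K(\Z,2) \to B\mathrm{Spin}^c \to BSO$ using Kudo's transgression theorem. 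Small cases illustrate the mechanism: $Sq^1 v_2 = w_3 = 0$, $Sq^1 v_4 = w_5 = 0$, and for $k=3$, using $v_4 = w_4+w_2^2$ on oriented manifolds together with $Sq^3 w_4 = w_7$ on spin$^c$ (from the Wu formula for $Sq^i w_j$), we get $Sq^1 v_6 = w_7 + Sq^3 v_4 = w_7 + w_7 = 0$. The general inductive step expands each $Sq^j v_{2k+1-j}$ via Cartan's formula and the Wu formula $Sq^i w_j = \sum_t \binom{j-i+t-1}{t} w_{i-t} w_{j+t}$, then verifies cancellation against $w_{2k+1}$ using the inductive hypothesis $Sq^1 v_{2m} = 0$ for $m < k$ together with the relations on $w_{2^r+1}$.

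The main obstacle is organizing the combinatorics of this cancellation uniformly in $k$. A more conceptual alternative is to establish that $v_{2k}$ admits an integral lift in $H^{2k}(B\mathrm{Spin}^c;\Z)$: since all torsion in $H^*(B\mathrm{Spin}^c;\Z)$ has order $2$ (cited in the proof of Theorem \ref{thm:theta}), the existence of such a lift would immediately give $\beta^{\Z/2} v_{2k} = 0$, and hence $Sq^1 v_{2k} = \rho_2 \beta^{\Z/2} v_{2k} = 0$.
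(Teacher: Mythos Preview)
Your strategy of passing to the universal class in $H^*(B\mathrm{Spin}^c;\Z/2)$ is legitimate, but the proposal as written does not close. You correctly isolate the identity
\[
Sq^1 v_{2k} \;=\; w_{2k+1} \;+\; \sum_{\substack{3 \le j \le k\\ j \text{ odd}}} Sq^j v_{2k+1-j},
\]
and you verify the first few cases, but the general inductive step is never carried out: you yourself flag that ``organizing the combinatorics of this cancellation uniformly in $k$'' is the obstacle, and you do not supply that organization. The alternative route you mention---producing an integral lift of $v_{2k}$---is likewise only asserted, not proved; noting that an integral lift would imply $Sq^1 v_{2k}=0$ is tautological, since the two statements are equivalent by the Bockstein sequence. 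So as it stands the argument is a plausible outline with the essential step missing.

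The paper's proof avoids all of this by staying on the manifold and using Poincar\'e duality directly. For $k\ge 2$ it suffices to show $\langle Sq^1 v_{2k}(N)\cdot x,[N]\rangle=0$ for every $x$. Using $v_1(N)=0$ one rewrites this as $\langle Sq^{2k}Sq^1 x,[N]\rangle$; the single Adem relation
\[
Sq^2 Sq^{2k-1} \;=\; \tbinom{2k-2}{2}\,Sq^{2k+1} \;+\; Sq^{2k}Sq^1
\]
together with $v_{2k+1}(N)=0$ converts this to $\langle v_2(N)\cdot Sq^1 Sq^{2k-2}x,[N]\rangle$, which vanishes because $Sq^1 v_2(N)=Sq^1 w_2(N)=0$ on a spin$^c$ manifold. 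This is a four-line computation with no induction and no appeal to the polynomial structure of $H^*(B\mathrm{Spin}^c)$. Your universal approach, if completed, would yield the stronger statement $Sq^1 v_{2k}=0$ in $H^{2k+1}(B\mathrm{Spin}^c;\Z/2)$ directly; but note that this universal statement is in fact \emph{equivalent} to the lemma (via the isomorphism $\mathcal P_2$ of Lemma~\ref{lem:oh2}), so the paper's argument already establishes it.
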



\begin{proof}
If $n \le 2k+1$ or $k \le 1$, the identity holds trivially.

Assume $n > 2k+1$ and $k \ge 2$. 
By Poincar\'e Duality Theorem, it suffices to show that $\langle Sq^1 v_{2k}(N) \cdot x, [N] \rangle = 0$ for any $x \in H^{n-2k-1}(N)$.
Since $v_1(N) = 0$, $v_{2k+1}(N) =0$, $Sq^1 v_2(N) = 0$, and 
\[ Sq^2 Sq^{2k-1} =  \binom{2k-2}{2} Sq^{2k+1} + Sq^{2k} Sq^1\]
by the Adem relation \eqref{eq:adem}, 
we have
\begin{align*}
\langle Sq^1 v_{2k}(N) \cdot x, [N] \rangle & = \langle v_{2k}(N) \cdot Sq^1 x, [N]\rangle \\
& = \langle Sq^{2k} Sq^1 x, [N]\rangle \\
& = \langle Sq^2 Sq^{2k-1} x, [N] \rangle \\
& = \langle v_2(N) \cdot Sq^1 Sq^{2k-2} x, [N] \rangle \\
& = \langle Sq^1 v_2(N) \cdot Sq^{2k-2} x, [N] \rangle\\
& =0.
\end{align*}
\end{proof}


We now prove Theorem \ref{thm:main}.

\begin{proof}[Proof of Theorem \ref{thm:main}]
For $n=1$, 
\[ H^{6}(B\mathrm{Spin}^c)/\rho_2(H^{6}(B\mathrm{Spin}^c; \Z)) \cong \Z/2 \]
generated by $Sq^2 v_4$.
By Proposition \ref{prop:theta0}, $\Theta = Sq^2 v_4$, and Theorem \ref{thm:theta} completes the proof for $n=1$.

Now assume $n \ge 3$.
Set $m = n-1$.
Using the notation from Subsections \ref{ss:cofi} and \ref{ss:borCPsi}, we first determine the image of $\pi_{\varPsi\ast}$ in Diagram \eqref{diag:pf},
which is equivalent to determining the image of $ f_{7\ast} \circ PT \circ \pi_{\varPsi\ast}$.
Based on the construction of $f_{7}$, we compute $f_{7i\ast} \circ PT \circ \pi_{\varPsi\ast}$ for $1 \le i \le 8$ and any bordism class $[N, f] \in \wt{\Omega}_{r+4m+7}^{\mathrm{Spin}^c}(K(\Z/2, r))$.

For $4 \le i \le 8$, since $\pi_{\varPsi}^{\ast} (\gamma_j) = \pi_{\varPsi}^{\ast} (\gamma_{71}) =0$
for $j = 3, 5, 7$ (Table $2$), Lemma \ref{lem:nf} implies: 
\[ f_{7i\ast} \circ PT \circ \pi_{\varPsi \ast} ( [N, f]) = 0.\]

For $i = 1$, $f_{71\ast} \circ PT \circ \pi_{\varPsi \ast} ( [N, f])$ is detected by
\begin{align*}
\langle Sq^6 f^{\ast} ( \alpha_1 ), [N] \rangle & = \langle Sq^6 Sq^{4m+1} f^{\ast} ( l_r ), [N] \rangle \\
& = \langle Sq^6 Sq^{4n-3} f^{\ast} ( l_r ), [N] \rangle \\
& = \langle v_6(N) \cdot Sq^{4n-3} f^{\ast} (  l_r ), [N] \rangle \\
& = \langle Sq^1 v_6(N) \cdot Sq^{4n-4} f^{\ast} ( l_r ) , [N] \rangle.
\end{align*}
By Wu's formula \eqref{eq:wu} and Wu's explicit formula \cite[p. 94, Problem 8-A]{ms74}, 
$v_6 = w_2 w_4$, so $Sq^1 v_6 = 0$.
Thus,
\begin{align*}
\langle Sq^6 f^{\ast} ( \alpha_1 ), [N] \rangle 
 = \langle Sq^1 v_6(N) \cdot Sq^{4n-4} f^{\ast} ( l_r ) , [N] \rangle = 0,
\end{align*}
and hence $f_{71\ast} \circ PT \circ \pi_{\varPsi \ast} ( [N, f]) = 0$.

For $i = 3$, $f_{73\ast} \circ PT \circ \pi_{\varPsi \ast} ( [N, f])$ is detected by 
\begin{align*}
\langle w_2^2(N) \cdot Sq^2 f^{\ast} ( \alpha_1 ), [N] \rangle & = \langle w_2^2(N)  \cdot Sq^2 Sq^{4m+1} f^{\ast} ( l_r ), [N] \rangle \\
& = \langle w_2^2(N)  \cdot Sq^2 Sq^{4n-3} f^{\ast} ( l_r ), [N] \rangle \\
& = \langle Sq^2 [ w_2^2(N)  \cdot  Sq^{4n-3} f^{\ast} ( l_r ) ], [N] \rangle \\
& = \langle w_2^3(N) \cdot Sq^{4n-3} f^{\ast} ( l_r ), [N]\rangle \\
& = \langle Sq^1 w_2^3(N) \cdot Sq^{4n-4} f^{\ast} ( l_r ), [N]\rangle \\
& = 0,
\end{align*}
so $f_{73\ast} \circ PT \circ \pi_{\varPsi \ast} ( [N, f]) = 0$.

For $i=2$, $f_{72\ast} \circ PT \circ \pi_{\varPsi \ast} ( [N, f])$ is detected by:
\[  \langle Sq^4 Sq^2 f^{\ast} ( \alpha_1 ) , [N] \rangle  = \langle Sq^4 Sq^2 Sq^{4m+1} f^{\ast} ( l_r ), [N]\rangle 
 = \langle Sq^4 Sq^2 Sq^{4n-3} f^{\ast} ( l_r ), [N]\rangle. \]
By the Adem relation \eqref{eq:adem}, 
\[ Sq^4 Sq^2 Sq^{4n-3} = \binom{4n-3}{4}  Sq^{4n+2} Sq^1 + Sq^{4n} Sq^2 Sq^1. \]
Now,
 \begin{align*}
 \langle Sq^{4n+2} Sq^1 f^{\ast} ( l_r ) , [N] \rangle  & = \langle v_{4n+2}(N) \cdot Sq^1 f^{\ast} ( l_r ), [N]\rangle \\
 & = \langle Sq^1 v_{4n+2}(N) \cdot f^{\ast}(l_r), [N] \rangle\\
 & = 0
 \end{align*}
 by the definition of Wu classes and Lemma \ref{lem:v2k}.
 Therefore,
\begin{align*}
\langle Sq^4 Sq^2 f^{\ast} ( \alpha_1 ) , [N] \rangle & = \langle Sq^4 Sq^2 Sq^{4n-3} f^{\ast} ( l_r ), [N]\rangle \\
& =  \langle Sq^{4n} Sq^2 Sq^1 f^{\ast} ( l_r ) , [N]\rangle \\
& = \langle v_{4n}(N) \cdot Sq^2 Sq^1 f^{\ast} ( l_r ) , [N]\rangle \\
& = \langle v_2(N) \cdot v_{4n}(N) \cdot Sq^1 f^{\ast} ( l_r ) \rangle  + \langle Sq^2 v_{4n}(N) \cdot Sq^1 f^{\ast} ( l_r ) , [N]\rangle \\
& =  \langle Sq^1 [ v_2(N) \cdot v_{4n}(N)) ] \cdot f^{\ast} ( l_r )  + Sq^1 Sq^2 v_{4n}(N) \cdot f^{\ast} ( l_r ) , [N]\rangle\\
& = \langle Sq^1 Sq^2 v_{4n}(N) \cdot f^{\ast} ( l_r ) , [N]\rangle,
\end{align*}
where the last step uses Lemma \ref{lem:v2k}.
Since 
\begin{align*}
\langle Sq^1 Sq^2 v_{4n}(N) \cdot f^{\ast} ( l_r ) , [N]\rangle & = \langle Sq^1 Sq^2 v_{4n}, \tau_{N\ast} ( [N] \cap f^{\ast} (l_r) )\rangle \\
& = \langle Sq^1 Sq^2 v_{4n}, \mathcal{P}_2([N,f]) \rangle,
\end{align*}
and $\mathcal{P}_2$ is an isomorphism, the above calculations shows that the image of $\pi_{\varPsi\ast}$ is $\Z/2$ and is detected by $\langle Sq^1 Sq^2 v_{4n} , x \rangle$ for any $x \in H_{4n+3}(B\mathrm{Spin}^c)$.
By Lemmas \ref{lem:nozero} and \ref{lem:zero}, we conclude 
\[ Sq^1 \Theta = Sq^1 Sq^2 v_{4n}, \]
and thus $\Theta = Sq^2 v_{4n} $.
Theorem \ref{thm:theta} now completes the proof for $n \ge 3$.

For $n = 2$, we use the result for $n=3$.
Let $\mathbb{H} P^{2}$ be the quaternionic projective plane with generator $u \in H^{4}(\mathbb{H}P^2)$.
For any $18$-dimensional spin$^c$ manifold $M$, a direct calculation shows:
\[ v_{12}( M\times \mathbb{H}P^2 ) = v_8(M) \otimes \rho_2 (u). \]
For any torsion class $t \in TH^{8}(M;\Z)$, the result for $n=3$ gives 
\begin{align*}
\langle \rho_2(t ) \cdot Sq^2 \rho_2 (t), [M] \rangle & = \langle \rho_2(t \otimes u) \cdot Sq^2 \rho_2(t \otimes u), [M \times \mathbb{H}P^2] \rangle\\
& = \langle \rho_2(t \otimes u) \cdot Sq^2 v_8(M) \otimes \rho_2(u), [M \times \mathbb{H}P^2] \rangle \\
& = \langle \rho_2(t) \cdot Sq^2 v_8(M), [M] \rangle,
\end{align*}
completing the proof for $n=2$.
\end{proof}

\begin{proof}[Proof of Corollary \ref{coro:bsqv}]
Since $\Theta = Sq^2 v_{4n}$, the result follows immediately from Proposition \ref{prop:sqtheta}.
\end{proof}






\bibliographystyle{amsplain}

%
%

\end{document}